\documentclass{amsproc}
\usepackage{hyperref}
\usepackage{euscript} 
\usepackage{mathrsfs}
\usepackage{bbm}
\usepackage{amssymb} 
\usepackage{amsfonts,amsmath,amsxtra,mathdots,mathabx}
\usepackage{color}
\usepackage{tikz}
\usepackage{appendix,upgreek}

\allowdisplaybreaks

\DeclareFontFamily{U}{matha}{\hyphenchar\font45}
\DeclareFontShape{U}{matha}{m}{n}{
	<5> <6> <7> <8> <9> <10> gen * matha
	<10.95> matha10 <12> <14.4> <17.28> <20.74> <24.88> matha12
}{}
\DeclareSymbolFont{matha}{U}{matha}{m}{n}

\DeclareMathSymbol{\Lt}{3}{matha}{"CE}
\DeclareMathSymbol{\Gt}{3}{matha}{"CF}

\DeclareSymbolFont{mathc}{OML}{txmi}{m}{it}% txfonts
\DeclareMathSymbol{\varvv}{\mathord}{mathc}{118}
\DeclareMathSymbol{\varww}{\mathord}{mathc}{119}
\DeclareMathSymbol{\vnu}{\mathord}{mathc}{"17}

%\DeclareSymbolFont{letters}{OML}{ztmcm}{m}{it} % mathptmx

\DeclareSymbolFont{mathd}{OML}{ztmcm}{m}{it}
\DeclareMathSymbol{\varalpha}{\mathord}{mathd}{11}
\DeclareMathSymbol{\varlambda}{\mathord}{mathd}{21}

\DeclareMathSymbol{\depsilon}{\mathord}{mathd}{15}

\def\vepsilon{\upvarepsilon} 

\DeclareMathSymbol{\varchi}{\mathord}{mathd}{31}
 
\def\valpha{\text{\scalebox{0.88}[1.02]{$\alpha$}}}

\newcommand{\BC}{{\mathbb {C}}}

\newcommand{\BQ}{{\mathbb {Q}}} \newcommand{\BR}{{\mathbb {R}}}

 \newcommand{\BZ}{{\mathbb {Z}}}

\newcommand{\RC}{{\mathrm {C}}}

\newcommand{\RI}{{\mathrm {I}}} 
 
 \newcommand{\RN}{{\mathrm {N}}}
 
 \newcommand{\RR}{{\mathrm {R}}}

\newcommand{\GL}{{\mathrm {GL}}}
\newcommand{\PGL}{{\mathrm {PGL}}}

\newcommand{\sstyle}{\scriptstyle}
\newcommand{\ssstyle}{\scriptscriptstyle}

\newcommand{\ra}{\rightarrow} 
\def\viint{	\int \hskip -4 pt \int}

\def\fra{\mathfrak{a}}
\def\frm{\mathfrak{m}}
\def\frn{\mathfrak{n}}

\def\-{^{-1}}

\def\sasymp{\text{ \small $\asymp$ }}
\def\mod{\mathrm{mod}\,  }

\def\sumh{\sideset{}{^h}\sum}

\def\nd{\mathrm{d}}
\def\Tr{\mathrm{Tr}}

\def\Fx{F^{\times}}
\def\trh{ \mathrm{trh}}

\def\lp {\left (}
\def\rp {\right )}

\def\sstimes {\scalebox{0.55}{$\times$}}

\def\boldJ {\boldsymbol J}
\renewcommand{\Im}{{\mathrm{Im} }}

\def\shskip{\hskip 0.5 pt}

\def\frO {\text{\raisebox{- 2 \depth}{\scalebox{1.1}{$ \text{\usefont{U}{BOONDOX-calo}{m}{n}O} \hskip 0.5pt $}}}}
\def\frOO {\text{\raisebox{- 2 \depth}{\scalebox{1.1}{$ \text{\usefont{U}{BOONDOX-calo}{m}{n}O}$}}}}
\def\frp{\mathfrak{p}}

\def\frb{\mathfrak{b}}
\def\frc{\mathfrak{c}}
\def\frd{\mathfrak{d}}

\def\frD{\mathfrak{D}}

\def\SB{\text{\raisebox{- 2 \depth}{\scalebox{1.1}{$ \text{\usefont{U}{BOONDOX-calo}{m}{n}B} \hskip 0.5pt $}}}}

\def\SD{\text{\raisebox{- 2 \depth}{\scalebox{1.1}{$ \text{\usefont{U}{BOONDOX-calo}{m}{n}D} \hskip 0.5pt $}}}}

\def\SE{\text{\raisebox{- 2 \depth}{\scalebox{1.1}{$ \text{\usefont{U}{BOONDOX-calo}{m}{n}E} \hskip 0.5pt $}}}}

\def\SO{\text{\raisebox{- 2 \depth}{\scalebox{1.1}{$ \text{\usefont{U}{BOONDOX-calo}{m}{n}O} \hskip 0.5pt $}}}}

\def\SM{\text{\raisebox{- 2 \depth}{\scalebox{1.1}{$ \text{\usefont{U}{BOONDOX-calo}{m}{n}M} \hskip 0.5pt $}}}}

\def\SN{\text{\raisebox{- 2 \depth}{\scalebox{1.1}{$ \text{\usefont{U}{BOONDOX-calo}{m}{n}N} \hskip 0.5pt $}}}}

\def\SR{\text{\raisebox{- 2 \depth}{\scalebox{1.1}{$ \text{\usefont{U}{BOONDOX-calo}{m}{n}R} \hskip 0.5pt $}}}}

\def\SZ{\text{\raisebox{- 2 \depth}{\scalebox{1.1}{$ \text{\usefont{U}{BOONDOX-calo}{m}{n}Z}\hskip 1pt $}}}}

\def\SDH{\text{\raisebox{- 1 \depth}{\scalebox{1.06}{$ \text{\usefont{U}{dutchcal}{m}{n}H}  $}}}}

\newcommand{\delete}[1]{}

\theoremstyle{plain}

\newtheorem{thm}{Theorem}[section] \newtheorem{cor}[thm]{Corollary}
\newtheorem{lem}[thm]{Lemma}  \newtheorem{prop}[thm]{Proposition}

 \newtheorem{defn}[thm]{Definition}

\newtheorem {rem}[thm]{Remark}

\newtheorem*{acknowledgement}{Acknowledgements}

\numberwithin{equation}{section}

\begin{document}
	
\title[Moments of Central $L$-values for Maass Forms]{{Moments of Central $L$-values for Maass Forms over Imaginary Quadratic Fields}}

\author{Sheng-Chi Liu}%
\address{Department of Mathematics and Statistics, Washington State University,
	Pullman, WA 99164-3113, USA}%
\email{scliu@math.wsu.edu}%

\author{Zhi Qi}
\address{School of Mathematical Sciences\\ Zhejiang University\\Hangzhou, 310027\\China}
\email{zhi.qi@zju.edu.cn}

\thanks{The first author was supported by a grant (\#344139) from the Simons Foundation. The second author was supported by a grant (\#12071420) from the National Natural Science Foundation of China.}

\subjclass[2010]{11F67, 11F12}
\keywords{Maass cusp forms,  $L$-functions, non-vanishing, Kuznetsov trace formula,  Vorono\"i summation formula.}

\begin{abstract}
	In this paper, over imaginary quadratic fields, we consider the family of $L$-functions $L (s, f)$ for an orthonormal basis of spherical Hecke--Maass forms $f$ with Archimedean parameter $t_f$. We establish asymptotic formulae for the twisted first and second moments of the central values $L\big(\frac 1 2, f\big)$, which can be applied to prove  that at least $33 \%$ of  $L\big(\frac 1 2, f\big)$ with $t_f \leqslant T$ are non-vanishing as $T \rightarrow \infty$. Our main tools are the spherical Kuznetsov trace formula and the Vorono\"i summation formula over imaginary quadratic fields. 
\end{abstract} 

	\maketitle

{\small \tableofcontents}

% % % % % % % % % % % % % % % % % % % % % % % % % % % % % % % % % % % % % % % % % % % % % % % %
\section{Introduction}

A recurring theme in analytic number theory is the study of central value  of a family of $L$-functions. In this paper, we prove asymptotic formulae for the  twisted first and second moments of central $L$-values for the family of Hecke--Maass cusp forms over the classical  modular group $\PGL_2 (\BZ)$ or a Bianchi modular group $\PGL_2 (\frO)$ (here $\frO$ is the ring of integers of a imaginary quadratic field $F$). As a standard application, we obtain non-vanishing results  for the central value of such Maass form $L$-functions in the Archimedean aspect. 

There are abundant non-vanishing results for holomorphic modular forms over $\BQ$ in  \cite{Duke-1995,IS-Siegel,KM-Analytic-Rank,KM-Analytic-Rank-2,VanderKam-Rank,KMV-Derivatives,Djankovic-Gamma1,Rouymi-1,Rouymi-2,BF-prime-power,Lau-Tsang-Mean-Square,Luo-Weight,BF-Moments,Liu-L-Derivative,Jobrack-Derivative} and also over a totally real field in \cite{Trotabas-Hilbert}.

Recently there are two papers \cite{SH-Liu-Maass} and \cite{BHS-Maass} on the non-vanishing of central $L$-value  for the family of Maass forms for $\PGL_2 (\BZ)$ in the aspect of spectral parameter $t_f$; %\footnote{It should be mentioned that a non-vanishing result for $L \big(\frac 1 2 + it_f, f \big)$ was obtained in \cite{Xu-Nonvanishing}.} 
in the former, the existence of a positive proportion of non-vanishing is proven for eigenvalues in short intervals, while in the latter, a lower bound for the proportion is  obtained effectively. In both works,  a formula of Motohashi\footnote{As indicated in \cite[\S 3.6]{Motohashi-Riemann}, this formula was claimed by Kuznetsov \cite{Kuznetsov-Motohashi-formula} with no
	rigorous proof. It should therefore be called the Kuznetsov--Motohashi formula.  Nevertheless, to avoid confusion, we shall still name it after Motohashi.} (see \cite[Lemma]{Motohashi-JNT-Mean} or \cite[Lemma 3.8]{Motohashi-Riemann}) is used for the twisted second moment, but the authors of \cite{BHS-Maass} are able to obtain an asymptotic formula so that their effective non-vanishing result becomes possible.

In this paper, we use the formula of Kuznetsov instead of Motohashi. The reader might wonder: ``What is new here?  The Kuznetsov formula has already been used for a lot of problems."  To illustrate the novelty of this work, we need to answer two questions:
\begin{itemize}
	\item [(1)] Why do the previous authors abandon the Kuznetsov formula?
	\item [(2)] Why do we abandon the Motohashi formula?
\end{itemize} 

There are two Kuznetsov formulae for $\mathrm{PSL}_2 (\BZ)$ (see \cite{Kuznetsov,Kuznetsov-Motohashi-formula} or \cite[Theorem 2.2, 2.4]{Motohashi-Riemann}):   weighted either with or without root number $\epsilon_f$ and containing either the $J$- or the $K$-Bessel function. The Kuznetsov formula for $\PGL_2 (\BZ)$ is deduced from summing up these two formulae (Maass forms for $\PGL_2 (\BZ)$ are termed even forms in the literature). 

In \cite[\S 3.3]{Motohashi-Riemann}, Motohashi derives his formula  from the Kuznetsov  formula for $\mathrm{PSL}_2 (\BZ)$ that is weighted by $\epsilon_f$ and contains    $K_{2it}(x)$. A simple but crucial observation  is that $L \big(\frac 1 2 , f \big) = 0$ if   $\epsilon_f = - 1$.  
He avoids using the other Kuznetsov formula with $ J_{2it}(x)$, as ``the relevant transformation   is difficult to handle because its integrand is not of rapid decay''.

In  \cite{SH-Liu-Maass}, Shenhui Liu follows Wenzhi Luo's mollification analysis for the holomorphic case in  \cite{Luo-Weight}. In his Introduction, he lists several advantages of Motohashi's formula for the mollified second moment and explains that there is a ``deeper reason for using Motohashi's formula'': If one were to use the Kuznetsov formula, it would not be easy ``to extract information from the off-diagonal terms by using properties of Estermann zeta-functions'',  ``since the Mellin--Barnes representation of $J_{2it}(x)$ gives very narrow room for contour shifting''.

 A direct approach by the Kuznetsov formula is certainly of its own interests and merits. A  more general goal   on our mind is to obtain a positive proportion for the non-vanishing problem over an imaginary quadratic field $F$. However, there is currently no Motohashi formula  over a field other than $\BQ$. Indeed, the root number $\epsilon_f$ is always $+1$ for any spherical Maass form $f$ over $\mathrm{PSL}_2 (\frO)$, so Motohashi's idea does not work here directly. At any rate,  one must reconsider  the problem over $\BQ$ and find a way to solve it without recourse to the Motohashi formula. 

 % are mainly from complex analysis. 
 Our idea is to bypass the   difficulties encountered in \cite[\S 3.3]{Motohashi-Riemann} and \cite{SH-Liu-Maass} by 
\begin{itemize}
	\item [(1)] applying the Vorono\"i summation as a substitute of the functional equation for Estermann zeta-functions, and
	\item [(2)] applying the Fourier-type representation  instead of the Mellin--Barnes representation  for   Bessel functions. 
\end{itemize}
The Vorono\"i summation has occurred in the case of holomorphic modular forms in \cite{Hough-Zero-Density,BF-Moments}, but their analyses are quite different from ours.   A key feature of our analysis is a uniform treatment of the integrals involving $J_{2it} (x)$ and $K_{2it} (x)$ which can be extended to the complex setting. More explicitly, we shall have   Fourier integrals in the off-diagonal terms, and the problem will be reduced to estimating the area of certain regions defined via hyperbolic or trigonometric-hyperbolic functions.

\subsection*{Statement of Results}

Let $F  = \BQ  $   or an imaginary quadratic field $\BQ (\sqrt{d_F})$ of discriminant $d_F$ and class number $h_F = 1$. Let $\frO $ be its ring of integers. Let $N  = 1$ or $2$ be the degree of $F$. For a nonzero integral ideal $\frn \subset \frO$ let $\RN (\frn) = |\frO / \frn|$ be its norm and $\tau (\frn) = \sum_{\frd | \frn} 1$.

Let $\gamma_{0}$ and $\gamma_{1}$ respectively be the       constant term and the residue  of Dedekind's $\zeta_F (s)$  at  $s=1$. Define $ c_1 = 1/8\pi^2 $ or $\sqrt{|d_F|}/8\pi^3$ according    as $F = \BQ$ or $\BQ (\sqrt{d_F})$. 
Moreover, let  $\theta = \frac {13}{84}$,  $\theta' = \frac {9} {56}$ (see \S \ref{sec: zeta function}), and for $0 < \beta \leqslant 1$ define
\begin{align}\label{9eq: exponent for E2} 
\valpha_1 = \left\{ \begin{aligned}
	& \hskip -1.5pt 0, &&     \text{if }  \tfrac {1273} {4053}+\vepsilon  \leqslant \beta \leqslant 1, \\
	& \hskip -1.5pt 2 \theta ,   &&    \text{if }  0 < \beta <  \tfrac {1273} {4053} +\vepsilon ,
\end{aligned} \right. \hskip 8pt	\valpha_2 = \left\{ \begin{aligned}
		& \hskip -1.5pt 0, &&     \text{if }  \tfrac 2 3+\vepsilon  \leqslant \beta \leqslant 1, \\
		& \hskip -1.5pt 2 \theta , &&   \text{if }  \tfrac {1273} {4053}+\vepsilon  \leqslant \beta <  \tfrac 2 3+\vepsilon , \\
		& \hskip -1.5pt 4 \theta, &&    \text{if }  0 < \beta <  \tfrac {1273} {4053} +\vepsilon ,
	\end{aligned} \right.  
\end{align}
  if $F = \BQ$, and
\begin{align}
\valpha_1 = \left\{ \begin{aligned}
	& \hskip -1.5pt 0, &&    \text{if }  \tfrac 7 8+\vepsilon  \leqslant \beta \leqslant 1, \\ 
	& \hskip -1.5pt 2 \theta', &&   \text{if }  0 < \beta <  \tfrac {7} {8} +\vepsilon ,
\end{aligned} \right. \quad 	\valpha_2 = \left\{ \begin{aligned}
		& \hskip -1.5pt 2 \theta', &&   \text{if }  \tfrac 7 8+\vepsilon  \leqslant \beta \leqslant 1, \\ 
		& \hskip -1.5pt 4 \theta', &&    \text{if }  0 < \beta <  \tfrac {7} {8} +\vepsilon ,
	\end{aligned} \right.  
\end{align}
 if $F = \BQ (\sqrt{d_F})$.

Let $ \SB $ be an orthonormal basis consisting of Hecke--Maass cusp forms for the spherical cuspidal spectrum for $ \PGL_2 (\frO)$. For $f \in \SB$, let  $ t_{f } \in [0, \infty) \cup i \big[ 0, \frac 1 2 \big)$ be its Archimedean parameter, $\lambda_f (\frn)$ be its Hecke eigenvalues, $L(s, f)$ and $L(s, \mathrm{Sym}^2 f)$ respectively be its standard and symmetric square $L$-functions. For any sequence of complex numbers $a_f$ we introduce the harmonic summation
\begin{align}
	\sumh_{f \in  \SB} a_f = \sum_{f \in  \SB} \omega_f a_f, \qquad \omega_f = \frac 1 {2 L(1, \mathrm{Sym}^2 f)} . 
\end{align} 

For large  $T > M   $    we define  
\begin{align}\label{1eq: defn of kq}  
	k  (t) =    e^{- (t  - T)^2 / M^2} + e^{-(t + T)^2 / M^2}     ,  
\end{align}
and for $q = 1$ or $2$ we introduce the  smoothly weighted twisted moments: 
\begin{align}
	\SM_q (\frm ) =  \sumh_{f \in  \SB  } \hskip -1pt    k   ( t_f ) \lambda_f  ( \frm )    L \big( \tfrac 1 2 , f \big)^q  .   
\end{align}

\begin{thm}\label{thm: moment}
%	Let notation be as above.   
Define 
%	\begin{align}\label{10eq: defn of gamma}
	$	\gamma_0 '  = \gamma_{0} - \gamma_{1}   \log \big( (2\pi)^N/|d_F| \big) $.  
%	\end{align}  
Let $ M = T^{\beta}$ with $\vepsilon \leqslant \beta \leqslant 1-\vepsilon$.  Then
	\begin{align}
		\label{10eq: 1st moment}
		\SM_1 (\frm ) =  4 \sqrt{\pi} c_1 \frac { M T^{N}    } {\sqrt{\RN(\frm)}}   \big(  1 + O_{\vepsilon}  \big( (M/T)^2 \big)  \big) + O_{\vepsilon} \lp M T^{   N   \valpha_1 + \vepsilon}  \rp, 
	\end{align}
	for $\RN (\frm) \leqslant T^{N-\vepsilon}$, and %there are constants $\gamma_{1}$ and $e_F$ such that 
	\begin{align}
		\label{10eq: 2nd moment}
		\begin{aligned}
			\SM_2 (\frm ) =  8 \sqrt{\pi} c_1 \frac { \tau (\frm) M T^N   } {\sqrt{\RN(\frm)}}    \bigg(   \gamma_{1}  \log \frac {T^N} {\sqrt{\RN(\frm)}} + \gamma_0 '   + O_{\vepsilon}  ( M T^{\vepsilon} /T )  \bigg) & \\ +     O_{\vepsilon}    \bigg(  M T^{   N   \valpha_2  + \vepsilon} + \frac {\sqrt{\RN(\frm) } T^{N/2 + \vepsilon}} {M^{1-N/2}}  \bigg) & , 
		\end{aligned}
	\end{align}
	for   $\RN (\frm) \leqslant T^{2 N -\vepsilon}$.  
	Moreover, if $F = \BQ$, then the error term $O_{\vepsilon}  ( M T^{\vepsilon} /T )$ in the first line of {\rm\eqref{10eq: 2nd moment}} may be improved into $O_{\vepsilon}    \big(  (M/T)^2 \log T \big) $ and the second error term in the second line may be removed if $ \RN (\frm) \leqslant M^{2-\vepsilon}$. 
\end{thm}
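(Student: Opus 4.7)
My plan is to apply an approximate functional equation (AFE) to $L\big(\tfrac 1 2, f\big)^{q}$, opening it as a Dirichlet series $\sum_{\frn} \tau^{(q)}_f(\frn) V_q(\RN(\frn)) / \sqrt{\RN(\frn)}$ truncated at length $\asymp T^{qN}$ (with $\tau^{(1)}_f = \lambda_f$ and $\tau^{(2)}_f$ the Dirichlet convolution $\lambda_f \ast \lambda_f$), and then use the Hecke relation
\begin{equation*}
\lambda_f(\frm)\lambda_f(\frn) = \sum_{\frd\mid(\frm,\frn)} \lambda_f\big(\frm\frn/\frd^2\big)
\end{equation*}
to reduce $\SM_q(\frm)$ to combinations of sums $\sumh_{f\in\SB} k(t_f)\lambda_f(\mathfrak{l})$ for various ideals $\mathfrak{l}$. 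The spherical Kuznetsov trace formula for $\PGL_2(\frO)$ then splits each of these into a diagonal ($\mathfrak{l}$ a perfect square) contribution and an off-diagonal sum of Kloosterman sums $S(\mathfrak{l}, 1; \frc)$ weighted against Bessel transforms of $k$.

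\textbf{Main terms from the Kuznetsov diagonal.}
For $\SM_1(\frm)$ the diagonal constraint $\frm\frn = \frd^2$ combined with $\frd \mid (\frm,\frn)$ forces $\frn = \frm$ and $\frd = \frm$; the remaining $t$-integral is Gaussian and yields $4\sqrt{\pi} c_1 M T^N\bigl(1+O((M/T)^2)\bigr)/\sqrt{\RN(\frm)}$, the stated leading term, with the $(M/T)^2$ correction coming from the Taylor expansion of the Plancherel factor around $t=T$. For $\SM_2(\frm)$ the diagonal cuts out pairs $(\frn_1,\frn_2)$ with $\frm\frn_1\frn_2$ a square; after disentangling the divisor structure and unfolding the resulting arithmetic series against the Laurent expansion of $\zeta_F(s)^2$ at $s=1$ one extracts both the $\tau(\frm)$-prefactor and the main shape $\gamma_1\log(T^N/\sqrt{\RN(\frm)})+\gamma_0'$, where $\gamma_0'=\gamma_0-\gamma_1\log((2\pi)^N/|d_F|)$ arises from the ratio of the AFE's Archimedean gamma factors to those in the $\zeta_F$ functional equation.

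\textbf{Off-diagonal via Vorono\"i and Fourier Bessel analysis.}
The Kloosterman contribution is treated by applying the Vorono\"i summation formula over $F$ in the $\frn$-variable, which rewrites $\sum_\frn \lambda_f(\frn)\,S(\mathfrak{l}(\frn),1;\frc)\cdots$ as a dual Dirichlet series twisted by Ramanujan-type exponentials and replaces the original Bessel integral against $k(t)$ by a new double oscillatory integral. As announced in the introduction, I represent $J_{2it}(x)$ and $K_{2it}(x)$ by their Fourier-type integrals rather than by Mellin--Barnes, which provides a uniform treatment of both Archimedean places and, over $F=\BQ(\sqrt{d_F})$, of the Macdonald-type kernel in two real variables. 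Stationary-phase analysis then reduces the estimate to bounding the measure of a region carved out by hyperbolic (real case) or trigonometric-hyperbolic (complex case) equations; combined with subconvex/Rankin--Selberg bounds of strength $\theta=\tfrac{13}{84}$ or $\theta'=\tfrac{9}{56}$ this produces the error $O(MT^{N\valpha_q+\vepsilon})$, whose piecewise dependence on $\beta$ records where the subconvex bound beats the convexity bound.

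\textbf{Main obstacle.}
The technical heart is precisely the sharp handling of the post-Vorono\"i Bessel integrals: Mellin--Barnes offers little contour freedom and is particularly delicate for $J_{2it}$, whereas the Fourier representation requires a real-variable stationary-set area estimate that must operate uniformly over both Archimedean places, including the two-variable Macdonald kernel in the imaginary quadratic setting. A secondary but delicate point is the preservation of the logarithmic main term in $\SM_2(\frm)$ against the dual AFE tail, which generates the supplementary error $\sqrt{\RN(\frm)}\,T^{N/2+\vepsilon}/M^{1-N/2}$ for $\RN(\frm)>M^{2-\vepsilon}$; the sharper refinement $O((M/T)^2\log T)$ and the elimination of this tail when $F=\BQ$ and $\RN(\frm)\leqslant M^{2-\vepsilon}$ would exploit an additional oscillation available to one-variable Bessel kernels, and verifying it is where the argument's real-variable delicacies concentrate.
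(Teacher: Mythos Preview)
Your overall architecture (approximate functional equation, Kuznetsov, Vorono\"i, Fourier-type Bessel analysis with area estimates for the hyperbolic/trigonometric-hyperbolic loci) is exactly the paper's, but two of the attributions are wrong and each corresponds to a genuine gap.

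First, you omit the continuous-spectrum (Eisenstein) contribution from the Kuznetsov formula. Kuznetsov reads $\text{(cuspidal)}+\text{(Eisenstein)}=\text{(diagonal)}+\text{(Kloosterman)}$, so to isolate $\SM_q(\frm)$ you must subtract
\[
\SE_q(\frm)=\frac{c_0}{4\pi}\int_{-\infty}^{\infty} k(t)\,\tau_{it}(\frm)\,\omega(t)\,\bigl|\zeta_F(\tfrac12+it)\bigr|^{2q}\,\nd t .
\]
It is precisely \emph{this} integral that is controlled by the subconvex and short-interval moment inputs for $\zeta_F$ and that produces the error $O(MT^{N\valpha_q+\vepsilon})$, with the piecewise dependence on $\beta$ coming from the thresholds at which moment bounds beat pointwise subconvexity. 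Your proposal instead attaches the subconvex bounds to the Kloosterman off-diagonal; in the paper the off-diagonal never sees $\zeta_F$, and its bound is purely the Bessel/area analysis, giving only the $\sqrt{\RN(\frm)}\,T^{N/2+\vepsilon}/M^{1-N/2}$ term.

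Second, for $\SM_2(\frm)$ the Kuznetsov diagonal does \emph{not} produce the full main term. Whether you use the AFE with $\lambda_f(\frn)\tau(\frn)$ (as the paper does, via $L(s,f)^2=\zeta_F(2s)\sum_\frn\lambda_f(\frn)\tau(\frn)\RN(\frn)^{-s}$) or your $\lambda_f\ast\lambda_f$ form, the diagonal contributes exactly $4\sqrt{\pi}c_1\,\tau(\frm)MT^N/\sqrt{\RN(\frm)}\cdot\bigl(\gamma_1\log(T^N/\sqrt{\RN(\frm)})+\gamma_0'+\cdots\bigr)$, only \emph{half} the coefficient in the theorem. The other half comes from the \emph{zero-frequency} (polar) term after Vorono\"i is applied to the $\tau(\frn)$-weighted $n$-sum inside the Kloosterman piece: one opens $S(m,n;c)$, applies the divisor-function Vorono\"i, and the $n=0$ dual contribution is computed via the Mellin identity $\widetilde{B}_{it}(s)=\gamma(s,t)/\gamma(1-s,t)$ together with the Ramanujan-sum evaluation $\sum_{(c)}S(m,0;c)|\RN(c)|^{-1-2v}=\tau_v(\frm)/(\RN(\frm)^v\zeta_F(1+2v))$; the resulting expression is exactly the missing $4\sqrt{\pi}c_1\cdots$. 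Your outline treats Vorono\"i purely as an error-generating step, so as written it accounts for only half the asymptotic. (A related slip: the Vorono\"i used here is for the divisor function $\tau$, not for Hecke eigenvalues as your phrase ``$\sum_\frn\lambda_f(\frn)\,S(\cdot)$'' suggests; after Kuznetsov no $\lambda_f$ remains.)
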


%\begin{rem}\label{rem: m < T}
	The error terms are always inferior to the main term in {\rm\eqref{10eq: 1st moment}}  as long as $\RN(\frm) \leqslant T^{N-\vepsilon}$, while this holds for {\rm\eqref{10eq: 2nd moment}} as long as 
\begin{align}\label{1eq: m < T}
	\RN (\frm) \leqslant \min \left\{ M^{2-N/2} T^{N/2 - \vepsilon}, T^{2 N (1 - \valpha_2) - \vepsilon} \right\} . 
\end{align}
%\end{rem}

When $F = \BQ$, with  cleaner error terms, \eqref{10eq: 1st moment} and  \eqref{10eq: 2nd moment} are essentially Theorem 4.1 and 5.1 in \cite{BHS-Maass} prior to the   averaging process for the $T$-parameter.  

%We have a simpler asymptotic formula on setting $\frm = (1)$. 

For $T^{\vepsilon} \leqslant 3 H \leqslant T $ define
\begin{align}\label{1eq: truncated moments}
\SN_q  (T, H) = 	\sumh_{|t_f-T| \leqslant H} L \big( \tfrac 1 2 , f \big)^q + \frac {1}  {2\pi} \gamma_1 \int_{\, T-H}^{T+H}        \frac {\left| \zeta_F \big(\tfrac 1 2 + it \big) \right|^{2q} } { | \zeta_F  (1 + 2 it  )  |^2 }
  \shskip   \nd t .
\end{align} 
%By the   average process of Ivi\'c and Jutila, 
Then we have  simpler asymptotic formulae for   $\SN_q (T, H)$ as follows. %The proof is by  an  averaging process  for the case $\frm = (1)$. 
%For the fisrt moment it is critical that $ L \big(\frac 1 2 , f\big) $ is non-negative   (\cite{Guo-Positive}). 

\begin{cor} \label{cor: unsmooth} 
 	For $F = \BQ$ we have
\begin{align}\label{1eq: N1, Q}
	\SN_1 (T, H) =  \frac 1 {\pi^2}  H T    + O_{\vepsilon}  \lp T^{1+\vepsilon} \rp, 
\end{align}
and
\begin{align}\label{1eq: N2, Q}
\SN_2  (T, H) = \frac 1 {\pi^2} \int_{\,T-H}^{T+H}  K      (       \log  K   + \gamma_0 '    ) \nd K %\frac {1} {2 \pi^2} \hskip -1pt \sum_{\pm} \hskip -1pt \pm (T \hskip -1pt \pm \hskip -1pt H)^2 \log (T \hskip -1pt \pm \hskip -1pt H) \hskip -1pt + \hskip -1pt \frac  {2 \gamma_0'   - 1} {\pi^2} HT 
 +   O_{\vepsilon}  \big( T^{1+\vepsilon}    \big) .
\end{align}  
For $F = \BQ (\sqrt{d_F})$ we have 
\begin{align}
	\SN_1 (T, H) =  \frac {\sqrt{|d_F|}} {3 \pi^{3}  } \big( 3 H T^2 + H^3 \big)     + O_{\vepsilon}  \lp T^{2+\vepsilon} \rp, 
\end{align}
and 
\begin{align}
	\SN_2  (T, H) = \frac {\sqrt{|d_F|}} {  \pi^{3}  } \int_{\,T-H}^{T+H} K^2         (  2 \gamma_{1}   \log  K   + \gamma_0 '    ) \nd K + O_{\vepsilon} \big(T^{2+\vepsilon}\big).
\end{align} 
\end{cor}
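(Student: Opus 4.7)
The plan is to derive Corollary 1.2 from Theorem 1.1 by de-smoothing the Gaussian-weighted moment $\SM_q(\frO)$ into the sharp-window sum, and to treat the additive continuous-spectrum Eisenstein integral in $\SN_q(T, H)$ by a parallel argument.

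First I specialize Theorem 1.1 to $\frm = \frO$ (so $\RN(\frm) = 1$ and $\tau(\frm) = 1$) and regard the spectral center of the Gaussian as a variable $T' \in [T-H, T+H]$. A direct integration gives
$$\tilde\chi(t) \;:=\; \frac{1}{M\sqrt\pi} \int_{T-H}^{T+H} e^{-(t-T')^2/M^2}\,dT' \;=\; \tfrac{1}{2}\bigl( \operatorname{erf}((T+H-t)/M) - \operatorname{erf}((T-H-t)/M) \bigr),$$
which agrees with the sharp indicator of $[T-H, T+H]$ up to error $O(T^{-A})$ outside a transition layer of width $O(M\sqrt{\log T})$ about $t = T \pm H$. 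Interchanging sum and integral yields
$$\sumh_f \tilde\chi(t_f)\, L(\tfrac{1}{2}, f)^q \;=\; \frac{1}{M\sqrt\pi} \int_{T-H}^{T+H} \SM_q(\frO; T')\,dT'.$$

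Substituting the main term of Theorem 1.1 and integrating over $T'$ yields
$$\frac{1}{M\sqrt\pi} \int_{T-H}^{T+H} 4\sqrt\pi c_1 M (T')^N\,dT' \;=\; 4 c_1 \int_{T-H}^{T+H} (T')^N\,dT',$$
which evaluates exactly to $HT/\pi^2$ when $F = \BQ$ (using $c_1 = 1/(8\pi^2)$, $N = 1$) and to $\sqrt{|d_F|}(3HT^2 + H^3)/(3\pi^3)$ when $F = \BQ(\sqrt{d_F})$. The second-moment case is analogous: after setting $\frm = \frO$, the logarithmic factor $\gamma_1 \log((T')^N) + \gamma_0'$ of Theorem 1.1 integrates to $\int_{T-H}^{T+H} K^N(\gamma_1 \log K^N + \gamma_0')\,dK$ up to the constant $8 c_1$. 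The additive Eisenstein integral in the definition of $\SN_q(T, H)$ is the natural continuous-spectrum companion to the cuspidal sum in the Kuznetsov trace formula; applying the same Gaussian de-smoothing to its Gaussian-weighted version recovers the sharp-window integral with negligible error $O(M(\log T)^{O(1)})$ by standard moment bounds for $\zeta_F$ on the critical line. One chooses $M = T^\beta$ with $\beta$ just above the threshold ($\tfrac{1273}{4053}$ for $F = \BQ$, $\tfrac{7}{8}$ for $F = \BQ(\sqrt{d_F})$) so that $\valpha_q$ vanishes or is minimal, making the Theorem 1.1 error contribute $O(HT^\vepsilon + M^2 H T^{N-2})$ after integration.

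The main obstacle is the transition-layer error: the contribution to $\sumh_f (\tilde\chi(t_f) - \mathbbm{1}_{[T-H, T+H]}(t_f))\, L(\tfrac{1}{2}, f)^q$ from spectral parameters within $O(M\sqrt{\log T})$ of $T \pm H$. A naive bound using Theorem 1.1 with the bump centered at $T \pm H$ gives $O(M T^N)$, which can exceed the target $O(T^{N+\vepsilon})$ once $M$ is sizable. The resolution is to replace the single Gaussian convolution by a Beurling--Selberg-type sandwich: construct smooth $\phi_\pm$ from linear combinations of Gaussian bumps $k_{T'}$ satisfying $\phi_- \leqslant \mathbbm{1}_{[T-H, T+H]} \leqslant \phi_+$, apply Theorem 1.1 to each, and exploit the cancellation between their main terms so that only the residual $\phi_+ - \phi_-$, supported on an effective boundary layer of width $O(M)$, contributes to the error. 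After balancing $M$ against all error sources, the claimed $\SN_q(T, H) = (\text{main term}) + O(T^{N+\vepsilon})$ follows; the inclusion of the Eisenstein integral in the very definition of $\SN_q$ ensures that no spurious continuous-spectrum correction remains.
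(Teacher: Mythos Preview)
Your high-level strategy (average the Gaussian-weighted moment over the centre $T' \in [T-H,T+H]$ and compare to the sharp window) is the same as the paper's, but there is a genuine gap in how you feed in the input.  You invoke Theorem~1.1 as a black box for $\SM_q(\frO)$, and then pick $M=T^{\beta}$ with $\beta$ just above the thresholds $1273/4053$ or $7/8$.  This cannot be made to work.  The transition-layer (or Beurling--Selberg sandwich) error is of size $O(MT^{N+\vepsilon})$, since the residual $\phi_+-\phi_-$ is $\asymp 1$ on a window of width $\asymp M$ and the moment there is $\asymp MT^{N}$ by Theorem~1.1 itself; to get $O(T^{N+\vepsilon})$ one is forced to take $M=T^{\vepsilon}$.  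But with $M=T^{\vepsilon}$ the exponents $\valpha_q$ in Theorem~1.1 do not vanish (indeed for $q=2$ your stated thresholds only kill $\valpha_1$, not $\valpha_2$), and the integrated error $O(H\,T^{N\valpha_q+\vepsilon})$ exceeds $T^{N+\vepsilon}$ already for $q=1$, $F=\BQ$, where $H\,T^{2\theta}\sim T^{55/42}$.  There is no balance of $M$ that simultaneously tames both error sources.

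The point you have missed is that the $T^{N\valpha_q}$ term in Theorem~1.1 comes \emph{entirely} from bounding the Eisenstein piece $\SE_q(\frO)$ via subconvexity or short-interval moments of $\zeta_F$.  Since $\SN_q(T,H)$ by definition already contains the sharp Eisenstein integral, the correct object to unsmooth is not $\SM_q(\frO)$ but the sum $\SM_q(\frO)+\SE_q(\frO)$.  Going back into the proof of Theorem~1.1 (before $\SE_q$ is estimated), one has clean asymptotics
\[
\SM_q(\frO)+\SE_q(\frO)=4q\sqrt{\pi}\,c_1\,MT^{N}\,P_q(\log T)+O_{\vepsilon}\big(M^{3}T^{N-2+\vepsilon}\big)
\]
with $P_q$ a polynomial of degree $q-1$, and \emph{no} $T^{N\valpha_q}$ term.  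Now one may take $M=T^{\vepsilon}$: the formula error is negligible, the cuspidal transition error is $O(MT^{N+\vepsilon})=O(T^{N+\vepsilon})$ (here nonnegativity of $L(1/2,f)$, which you did not invoke, is needed to run the sandwich), and the Eisenstein transition error is $O(MT^{2qN\theta+\vepsilon})$ by Lemma~5.4, also acceptable.  Your claim that the Eisenstein transition is $O(M(\log T)^{O(1)})$ by ``standard moment bounds'' is not right in general; one only gets $O(MT^{2qN\theta+\vepsilon})$, which is fine precisely because $M$ is now tiny.
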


  In the case $ H = T/3$,   the formulae \eqref{1eq: N1, Q} and \eqref{1eq: N2, Q} should be compared with \cite[Theorem 1]{Iviv-Jutila-Moments} and  \cite[Theorem 2]{Motohashi-JNT-Mean}.

As a consequence of the mollification technique as in \cite{IS-Siegel,KMV-Derivatives}, one may derive  from the asymptotic formulae in Theorem \ref{thm: moment} the following  effective lower bound for the  proportion of  non-vanishing $ L \big(\frac 1 2 , f\big) $. 

\begin{thm}\label{thm: non-vanishing}
	For any $\vepsilon > 0$ and sufficiently large $T$, we have 
	\begin{align}
		\mathop{\sumh_{|t_f - T| \leqslant H}}_{L (\frac 1 2 , f) \neq 0} 1  \geqslant \lp \frac {\varDelta } {1 + \varDelta } -\vepsilon \rp \sumh_{|t_f - T| \leqslant H} 1, 
	\end{align}
where $3 H = T^{\beta}$ with $\vepsilon \leqslant \beta \leqslant 1$ and  \begin{align}\label{1eq: Delta}
	\varDelta \leqslant \min \bigg\{ 1 - \valpha_2 ,   \frac 1 4 + \lp \frac 1 {N} - \frac 1 {4} \rp \beta \bigg\}.  
\end{align}
\end{thm}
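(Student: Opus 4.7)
The plan is to apply the mollification method of Iwaniec--Sarnak in the form used in \cite{IS-Siegel,KMV-Derivatives}, feeding in the twisted moment asymptotics of Theorem \ref{thm: moment}. I introduce the Dirichlet polynomial mollifier
\[
M(f) = \sum_{\RN(\frm)\leqslant L}\frac{x_\frm \shskip\lambda_f(\frm)}{\sqrt{\RN(\frm)}},\qquad L = T^{N\varDelta-\vepsilon},
\]
with coefficients $x_\frm = \mu_F(\frm)\, P\bigl(\log(L/\RN(\frm))/\log L\bigr)$ for a polynomial $P$ with $P(0)=0$, to be optimized at the end. The length $L$ is chosen so that $L^{2}$, the Hecke-convolution length of $M(f)^{2}$, satisfies the admissibility bound \eqref{1eq: m < T}: the two constraints $\RN(\frm) \leqslant T^{2N(1-\valpha_2)-\vepsilon}$ and $\RN(\frm) \leqslant M^{2-N/2}T^{N/2-\vepsilon}$, together with $L^{2} = T^{2N\varDelta-2\vepsilon}$, translate respectively into $\varDelta \leqslant 1-\valpha_2$ and $\varDelta \leqslant \tfrac14 + \bigl(\tfrac{1}{N}-\tfrac14\bigr)\beta$, which is exactly \eqref{1eq: Delta}.

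Next I apply Cauchy--Schwarz to the harmonic sum over $\{f : L(\tfrac12,f)\neq 0,\, |t_f-T|\leqslant H\}$:
\[
\biggl|\sumh_{|t_f-T|\leqslant H}\! L\bigl(\tfrac12,f\bigr) M(f)\biggr|^{2} \leqslant \Biggl(\mathop{\sumh_{|t_f-T|\leqslant H}}_{L(\frac12,f)\neq 0}\hskip -3pt 1\Biggr) \sumh_{|t_f-T|\leqslant H}\hskip -3pt L\bigl(\tfrac12,f\bigr)^{2}M(f)^{2}.
\]
The sharp cutoff on $t_{f}$ is sandwiched between smooth majorants and minorants of the Gaussian shape \eqref{1eq: defn of kq} with $M$ comparable to $3H$ up to $T^{\pm\vepsilon}$, costing only an $\vepsilon$-loss in the final proportion. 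Expanding $L(\tfrac12,f)M(f)$ term-by-term and inserting \eqref{10eq: 1st moment} gives
$\sumh k(t_{f})\,L(\tfrac12,f)M(f) \sim 4\sqrt{\pi}\shskip c_{1}\shskip MT^{N}\shskip\mathcal{F}(P)$
for a linear functional $\mathcal{F}(P)$. Expanding $M(f)^{2}$, applying the Hecke relation $\lambda_f(\frm_1)\lambda_f(\frm_2) = \sum_{\frd\mid(\frm_1,\frm_2)}\lambda_f(\frm_1\frm_2/\frd^{2})$ and substituting \eqref{10eq: 2nd moment} produces
$\sumh k(t_{f})\,L(\tfrac12,f)^{2}M(f)^{2} \sim 8\sqrt{\pi}\shskip c_{1}\shskip MT^{N}(N\log T)\shskip\mathcal{Q}(P)$
with $\mathcal{Q}(P)$ a positive-definite quadratic form. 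The Weyl-type denominator $\sumh k(t_{f}) \sim c_{1}MT^{N}$ on the far right comes from the spherical Kuznetsov formula applied to the trivial coefficient $\lambda_{f}(\frO)=1$.

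The last step is the classical one: $\mathcal{F}(P)$ and $\mathcal{Q}(P)$ are evaluated by contour shifts on Dirichlet series built from $\zeta_{F}(s)$ and $\zeta_{F}(s)^{2}$ near $s=1$, and the variational problem $\sup_{P}\mathcal{F}(P)^{2}/\mathcal{Q}(P)$ over polynomials with $P(0)=0$ has optimum $\varDelta/(1+\varDelta)$ in the limit $T\to\infty$ (see the mollifier optimization in \cite[\S6]{KMV-Derivatives}). Combined with Cauchy--Schwarz, this yields the stated lower bound. The main technical obstacle is carrying out the contour-integral evaluation over $F$ rather than $\BQ$: the Euler factors produced by the Hecke convolution depend on the splitting behavior (split, inert, or ramified) of rational primes in $F$, and matching these local factors against the polar structure of $\zeta_{F}(s)$ at $s=1$ requires some bookkeeping, though the overall structure of the calculation is parallel to the classical case.
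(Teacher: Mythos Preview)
Your proposal is correct and follows the same approach the paper indicates: the paper does not give a detailed proof but refers to the mollification technique of \cite{IS-Siegel,KMV-Derivatives} (with details as in \cite[\S 8]{BF-Moments} and \cite[\S 7]{BHS-Maass}), remarking only that the constraint \eqref{1eq: Delta} arises from the admissibility condition \eqref{1eq: m < T} on the mollifier length --- exactly the derivation you give. Your handling of the sharp cutoff via smooth Gaussian majorants/minorants matches the unsmoothing machinery in \S\ref{sec: choice of h}, and your use of the harmonic weight throughout is consistent with the paper's explicit choice to keep $\omega_f$ present so as to avoid extra work on $L(1,\mathrm{Sym}^2 f)$.
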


For $F = \BQ$   this is essentially Theorem 1.2 in \cite{BHS-Maass}. %For $F = \BQ (\sqrt{d_F})$ the proof is literally the same. 
For  $F = \BQ(\sqrt{d_F})$ it follows from almost the same arguments in \cite[\S 8]{BF-Moments} and \cite[\S 7]{BHS-Maass}. As such, we omit the details of proof and only remark   that  the limitation   \eqref{1eq: Delta} comes from the inequality   \eqref{1eq: m < T}. To avoid extra work on $ L(1, \mathrm{Sym}^2 f)$,  we allow the  harmonic weight  to be present (see the paragraph below (2.9) in \cite{IS-Siegel}). 

The following results follow  if we choose $H = T/3$  in Theorem \ref{thm: non-vanishing} and use a dyadic partition. 

\begin{cor}    We have
	\begin{align}
		\mathop{\sumh_{ t_f   \leqslant T}}_{L (\frac 1 2 , f) \neq 0} 1 \geqslant \lp \frac 1 2 -\vepsilon \rp \sumh_{t_f   \leqslant T } 1  
	\end{align}
if $F = \BQ  $, and  
	 \begin{align}
	 	\mathop{\sumh_{ t_f   \leqslant T}}_{L (\frac 1 2 , f) \neq 0} 1 \geqslant \lp \frac 1 3 -\vepsilon \rp \sumh_{t_f   \leqslant T } 1  
	 \end{align}
if $F = \BQ (\sqrt{d_F})$. 
\end{cor}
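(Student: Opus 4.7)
The plan is to deduce the corollary directly from Theorem \ref{thm: non-vanishing} by specializing $H$ and then organizing $t_f \in (0, T]$ via a geometric dyadic decomposition, absorbing the small-$t_f$ tail against a Weyl-type count for the harmonic sum.

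First I would set $H = T/3$ in Theorem \ref{thm: non-vanishing}, which forces $\beta = 1$ since $3H = T = T^{\beta}$, well inside the admissible range $[\vepsilon, 1]$. Substituting $\beta = 1$ into \eqref{1eq: Delta}: for $F = \BQ$ we have $N = 1$ and $\beta \geqslant 2/3 + \vepsilon$, so from \eqref{9eq: exponent for E2} we get $\valpha_2 = 0$; hence $1 - \valpha_2 = 1$ and $1/4 + (1/N - 1/4)\beta = 1$, giving the admissible choice $\varDelta = 1$ and the non-vanishing proportion $\varDelta/(1+\varDelta) = 1/2$ on the shell $|t_f - T| \leqslant T/3$. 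For $F = \BQ(\sqrt{d_F})$ we have $N = 2$ and $\beta \geqslant 7/8 + \vepsilon$, so $\valpha_2 = 2\theta' = 9/28$; then $1 - \valpha_2 = 19/28$ while $1/4 + (1/N - 1/4)\beta = 1/2$, so $\varDelta = 1/2$ is admissible and yields the proportion $\varDelta/(1+\varDelta) = 1/3$.

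Next I would propagate this single-shell bound to the whole range $t_f \leqslant T$. Setting $T_j = (2/3)^j T$ and applying Theorem \ref{thm: non-vanishing} at centre $T_j$ with $H_j = T_j/3$, the local parameter is still $\beta_j = 1$, so the \emph{same} non-vanishing proportion is valid on every shell $[2T_j/3, 4T_j/3]$. Iterating over $j = 0, 1, \dots$ until $T_j = O(1)$ covers the full range down to a bounded remainder; the Weyl-type estimate $\sumh_{t_f \leqslant K} 1 \asymp K^{N+1}$ (which follows from the spherical Kuznetsov input already used in Theorem \ref{thm: moment}) shows that this remainder contributes at most $O(1)$, negligible against the total of order $T^{N+1}$.

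Summing the shell bounds and noting that the geometric decomposition loses at most a constant factor (absorbed into the $\vepsilon$) yields the stated inequality. The only bookkeeping issue is the mild overlap between adjacent shells and consistent verification that $\beta_j = 1$ throughout, which is automatic because the choice $H_j = T_j/3$ is scale-invariant; no new analytic input beyond Theorem \ref{thm: non-vanishing} and the Weyl-type count is needed, which is why the authors present the deduction as a corollary.
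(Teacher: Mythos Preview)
Your approach---take $H = T/3$ (so $\beta = 1$) in Theorem \ref{thm: non-vanishing}, read off the admissible $\varDelta$ in each case, then assemble via a dyadic decomposition---is exactly what the paper does, and your derivation of the proportions $\tfrac12$ and $\tfrac13$ from \eqref{1eq: Delta} is correct.

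The gap is in the dyadic step. With $T_j = (2/3)^j T$ and $H_j = T_j/3$ the shells $[2T_j/3, 4T_j/3]$ genuinely overlap (shells $j$ and $j+1$ share the interval $[2T_j/3, (8/9)T_j]$, about a third of each), and shell $j=0$ also extends beyond $T$. When you sum the shell inequalities, the left side therefore over-counts the non-vanishing forms by a bounded multiplicity while the right side over-counts the full family by a \emph{different} bounded multiplicity; dividing through costs a \emph{fixed} constant factor, not $1+o(1)$, and this is \emph{not} absorbed into the additive $\vepsilon$ in the proportion. As written you would only recover roughly half of the stated constant. The fix is to choose the centres so that the shells tile disjointly: set $T_j = 3T/2^{\,j+1}$ and $H_j = T_j/3 = T/2^{\,j+1}$, so that the $j$-th shell is exactly $(T/2^{\,j}, T/2^{\,j-1}]$ for $j \geqslant 1$. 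One still has $3H_j = T_j$, hence $\beta = 1$ throughout, and now the shell inequalities sum telescopically to the desired bound on $(0,T]$, with only an $O(1)$ tail handled by the Weyl-type count you already invoke.
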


%The $\frac 13$ above is somehow weaker than the $\frac 12$ in the case $F = \BQ$ (see Theorem 1.1 in \cite{BHS-Maass}).

%Finally we remark that Theorem \ref{thm: non-vanishing}  yields at least $20 \%$ non-vanishing   central $L$-values as long as $H \geqslant T^{\vepsilon}$, which is comparable to \cite[Theorem 1.1]{BF-Moments} in the holomorphic case.

%\begin{cor}\label{cor: non-vanishing, 2}
%	For any $\vepsilon > 0$ and sufficiently large $T$, we have 
%	\begin{align}
%		\mathop{\sumh_{|t_f - T| \leqslant T^{\vepsilon}}}_{L (\frac 1 2 , f) \neq 0} 1  \geqslant \lp \frac 1 5 -\vepsilon \rp \sumh_{|t_f - T| \leqslant H} 1. 
%	\end{align} 
%\end{cor}
 
Finally, we remark that, with some efforts, our results can be extended to an arbitrary imaginary quadratic field (see \cite{Qi-GL(3)}). %With more careful analysis, the conditions $M, H \geqslant T^{\vepsilon}$ may be improved into $ M \geqslant C \sqrt{\log T} $ (for $ C $ large) and $H \geqslant \log T$ (see \cite{Iviv-Jutila-Moments}). 

\subsection*{Notation} By $X \Lt Y$ or $X = O (Y)$ we mean that $|X| \leqslant c Y$ for some constant $c > 0$, and by $X \asymp Y$  we mean that  $X \Lt Y$ and $Y \Lt X$. We write $X \Lt_{P, \shskip Q, \, \dots} Y$ or $X = O_{P, \shskip Q, \, \dots} (Y)$ if the implied constant $c$ depends on  $P$, $Q, \dots$. We say  that $X$ is negligibly small if $X = O_A (T^{-A})$ for arbitrarily large but fixed $A \geqslant 0$.

We adopt the usual $\vepsilon$-convention of analytic number theory; the value of $\vepsilon $ may differ from one occurrence to another.

\begin{acknowledgement}
	 We thank the referee for careful readings and  helpful	comments.
\end{acknowledgement}

 { \large \part{Preliminaries}}

\section{Number Theoretic Notation}\label{sec: notation}

%We give a list of our most frequently used notation from algebraic number theory. %We follow Lang \cite{Lang-ANT} closely.

\subsection{Basic Notions}
Let $F  = \BQ  $   or an imaginary quadratic field $\BQ (\sqrt{d_F})$ of class number $h_F = 1$, where $d_F$ is the discriminant of $F$. Let $N = 1$ or $2$ be the degree of $F$.  Let $\frO $ be its ring of integers and $\frOO^{\times}$  be the group of units.  Let $\mathfrak{D}$ be the different ideal of $F$ and $\frO' = \frD^{-1}$ be the dual of $\frO$. Let $w_F$ be the number of roots of unity in $F$. 
Let $\mathrm{N}$ and $\Tr$ denote the norm and the trace for $F    $, respectively.

Let $F_{\infty}$ be the Archimedean completion of $F$.  Let $\| \hskip 3.5 pt \|_{\infty} = | \hskip 3.5 pt |^N$  denote the normalized module of $F_{\infty}$, where $| \hskip 3.5 pt |$ is the usual absolute value. Define the additive character $\psi_{\infty} (x) = e (- x)$ if $F    _\infty = \BR$ and  $\psi_{\infty} (z) = e (- (z + \widebar z))$ if $F    _\infty = \BC$.  
We choose the Haar measure $\nd x$   of $F    _{\infty}$ self-dual with respect to $\psi_{\infty}$: the Haar measure is the ordinary Lebesgue measure on the real line if $F    _{\infty} = \BR$, and twice  the ordinary Lebesgue measure on the complex plane if $F    _{\infty} = \BC$. % The Fourier transform of a Schwartz function $f (x)$ on $F_{\infty}$ is defined by 
%\begin{align*}%\label{1eq: defn Fourier}
%	\widehat{f} (y) = \int_{F_{\infty}} f(x) \psi_{\infty} (xy) \nd x. 
%\end{align*}

In general, we use Gothic letters $\fra ,  \frb , \frm, \frn, \dots$ to denote {\it nonzero} integral ideals of $F$.   Let $\frp$ always stand for a prime ideal. 
Let $\RN (\fra)$ denote the norm of $\fra$.

\subsection{Arithmetic Functions}  

Let $\tau (\frn)  $ and $\mu (\frn)$  be the  divisor function and the M\"obius function. %, and the Euler totient  function for $F$. More explicitly, $ \tau (\frn) = \sum_{\frd |\frn} 1 $, $ \mu  (\frn) = (-1)^k $ if $\frn$ is square-free with $k$ distinct prime divisor ideals, $\mu (\frn) = 0$ if $\frn$ is not square-free, and $\varphi (\frn) = \RN (\frn) \prod_{\frp |\frn} (1- \RN (\frp)^{-1})$. 
For $s \in \BC$ define
\begin{align}\label{1eq: defn of tau (n)}
	\tau_s (\frn ) = \tau_{-s}  (\frn ) =  \sum_{ \sstyle \fra \frb = \frn  }  \RN \big(\fra \frb^{-1} \big)^{ s }. 
\end{align}

\subsection{Kloosterman and Ramanujan Sums} 
For $m, n \in \frO'$  and $c \in \frO$ we define
\begin{align}\label{2eq: defn Kloosterman KS}
	S (m, n ; c ) = \sum_{a \, \in (\frO /c)^{\sstimes} } \psi_{\infty} \bigg( \frac {m a + n \widebar{a} } {c} \bigg), 
\end{align} 
where $\widebar{a} $ is the multiplicative inverse of $a$ modulo $c$.   
The sum $S(m, 0; c)$ is usually named after Ramanujan. We have 
\begin{align}\label{2eq: Ramanujan}
	S(m, 0; c) = \sum_{\frd | (m \frD, c)} \RN(\frd) \mu \big( c \frd^{-1} \big) . 
\end{align}

\subsection{The Dedekind Zeta Function}\label{sec: zeta function}

Let $\zeta_F (s)$ be the Dedekind   $\zeta$ function for $F$:
\begin{align}
	 \zeta_F (s) = \sum_{\frn \shskip \subset \shskip \frO }  \frac { 1 } {\RN(\frn)^{  s} }, \qquad \mathrm{Re} (s) > 1. 
\end{align}
It is well-known that  $\zeta_F (s)$ is a meromorphic function on the complex plane with a simple pole at $s=1$. %Let   
Define the constants $\gamma_{0} $ and  $\gamma_{1} $ by % respectively be the  the constant term and the residue  of  $\zeta_F (s)$  at  $s=1$; namely, 
\begin{align}\label{2eq: zeta (s), s=1}
	\zeta_F (s) = \frac {\gamma_{1}} {s-1} + \gamma_{0} + O (|s-1|), \qquad s \ra 1. 
\end{align}  
For $F = \BQ (\sqrt{d_F})$ we have $\zeta_F (s) = \zeta (s) L (s, \chi^{}_{d_F})$ with $\chi^{}_{d_F}$ the primitive quadratic character associated to $F$.

Let $\theta > 0$ be a  sub-convex exponent for $ \zeta_F (s) $; namely, 
\begin{align}\label{2eq: subconvex}
	\zeta_F  \big( \tfrac 1   2 + it  \big) \Lt_{\vepsilon } (1+|t|)^{N \theta + \vepsilon}
\end{align}
for any $\vepsilon > 0$. %For arbitrary $F$ the Weyl exponent $\theta =   \frac 1   6$ is admissible \cite{Heath-Brown-Weyl}, while f
For the Riemann $\zeta (s)$  the best sub-convex exponent  to date   $\theta = \frac  {13}  {84}$ is due to Bourgain \cite{Bourgain}. This together with the Weyl sub-convex bound for $ L (s, \chi^{}_{d_F}) $ (see for example \cite{H-B-Hybrid}) yields $\theta = \frac {9} {56}$ in the case $F = \BQ (\sqrt{d_F})$. It should be remarked that for arbitrary  $F$ the Weyl exponent $\theta =   \frac 1   6$ is always admissible \cite{Heath-Brown-Weyl}.

\section{\texorpdfstring{Automorphic Forms on $\GL_2$}{Automorphic Forms on GL(2)}}

%We shall only consider . 
In this section, we briefly compile some results and introduce the relevant notation  from the theory of spherical automorphic forms on $\PGL_2 (\frO) \backslash \PGL_2 (F_{\infty}) / K_{\infty}$,  where $K_{\infty}  = \mathrm{O}_2 (\BR) / \{\pm 1_{2} \}$ or $  \mathrm{U}_2 (\BC)/  \{\pm 1_{2} \}$ according as   $F_{\infty} = \BR $ or $ \BC$, especially the Kuznetsov trace formula and the Vorono\"i summation formula (for Eisenstein series). The reader is referred to \cite{Qi-GL(3),Qi-VO,Venkatesh-BeyondEndoscopy} for further details.

%We denote by $Z$ the center of $\GL_2$. %Let $\PGL_2 = \GL_2 / Z$ as usual. 

%Let For each non-Archimedean $v$, let  $ K_{v} =  \PGL_2 (\frO_{v}) $. %Set $K (\frO) = \prod_{v \shskip \nmid \infty} K_{v} (\frO_{v})  $ ($\subset \GL_2 (\BA_{f})$). Note that $  \PGL_2 (F) \cap K_f = \PGL_2 (\frO)$ (the intersection is taken in $\GL_2 (\BA_f)$). %For each Archimedean place $v$, we fix a maximal compact subgroup $K_{v}$ of $\GL_2 (F_{v})$. 
 %By definition, an automorphic form  on $\PGL_2 (F) \backslash \PGL_2 (\BA)$ is spherical if it is invariant under $K$. 

%We identify $N (F_{\infty})$ with $F_{\infty}$ and $ A (F_{\infty}) $ with $F_{\infty}^{\times}$, and define their measures %on $N (F_{\infty})$ and $ A (F_{\infty}) $ accordingly. For $v | \infty$, we normalize the Haar measure on $K_{v}$ so that $K_{v} /  A (F_{v}) \cap K_{v}   $ has measure $1$.  Thus the measure of $K_{v}$  is $2$ or $2\pi$ according as $v$ is real or complex.  The Haar measure on $\PGL_2 (F_{\infty})$ is defined  via the Iwasawa decomposition $ \PGL_2 (F_{\infty}) = N (F_{\infty}) A(F_{\infty}) K_{\infty} $.

\vskip 5 pt

\subsection{Archimedean Representations}\label{sec: Archimedean} In this paper, we shall be concerned
only with {spherical} representations of $\PGL_2 (F_{\infty})$.   By definition, an irreducible representation of $\PGL_2 (F_{\infty})$ is spherical if it contains a nonzero $K_{\infty}$-invariant vector. 

Let $Y  = (-\infty, \infty) \cup i \hskip -1.5 pt \left(- \frac 1 2, \frac 1 2 \right)$.  We associate to $t \in Y$ a unique spherical unitary irreducible representation $\pi (i t)$ of $\PGL_2 (F_{\infty})$. Namely, the parameter $t $ determines a character of the  diagonal torus via
\begin{align*}
	\begin{pmatrix}
		x & \\ & y
	\end{pmatrix} \ra   \|x / y\|_{\infty}^{i t} , \qquad x, y \in F_{\infty}^{\times}, 
\end{align*}
and we let $\pi (i t)$ be the irreducible spherical constituent of the representation unitarily induced
from this character.     For $t$ real, the spherical $ \pi (i t) $ is   tempered, and the Plancherel measure $\nd   \mu (t)  $ is defined by
\begin{equation}\label{1eq: defn Plancherel measure}
	\nd  \mu (t) =
	\left\{ \begin{aligned}
		& t \tanh (\pi t) \nd \shskip t , \  & & \text{ if }  F_{\infty} \text{ is real}, \\
		&   t^2 \nd \shskip t ,   & & \text{ if }  F_{\infty} \text{ is complex}.
	\end{aligned}\right.
\end{equation} 
Moreover, we define 
\begin{equation}\label{1eq: defn of Pl(t)}
	\mathrm{Pl}  (t) = \left\{ \begin{aligned}
		& 4 \cosh (\pi t)   , & & \text{ if }  F_{\infty} \text{ is real}, \\
		& 8\pi \sinh (2\pi t) /   t   , \  & & \text{ if }  F_{\infty} \text{ is complex}.
	\end{aligned}\right.
\end{equation} 
Compared with \cite[(3.2)]{Qi-GL(3)}, we   have normalized $\mathrm{Pl}  (t)$ here by the factors $4$ and $8 \pi$. 
Let  $W_{ i t} $ be the spherical ($K_{\infty}$-invariant) Whittaker vector so that
\begin{equation}
	W_{i t} \begin{pmatrix}
		x  & \\ & 1
	\end{pmatrix} = \left\{ \begin{aligned}
		&   \|x  \|_{\infty}^{\frac 1 2}    K_{i t} (2\pi |x  |) , & & \text{ if }   F_{\infty} \text{ is real}, \\
		& \|x  \|_{\infty}^{\frac 1 2} K_{2 i t} (4\pi |x |)  , \  & & \text{ if }   F_{\infty} \text{ is complex}.
	\end{aligned}\right.
\end{equation}

\vskip 5 pt

\subsection{Hecke--Maass Cusp Forms}\label{sec: automorphic forms} 
%Next, we consider the space of   automorphic forms on $\PGL_2 (\frO) \backslash \PGL_2 (F_{\infty}) / K_{\infty}$. 

%\subsubsection{Preliminaries on Cusp Forms}

Fix an orthonormal basis $ \SB $   for the   cuspidal subspace of $L^2 (\PGL_2 (\frO) \backslash \PGL_2 (F_{\infty}) / K_{\infty})$ that consists of eigenforms for the Hecke algebra as well as the Laplacian operator (Hecke--Maass cusp forms). Each $f \in \SB $ transforms under a certain representation $\pi (i t_f)$ of $\PGL_2 (F_{\infty})$, for some $ t_f \in Y $. In general, we have the Kim--Sarnak bound in \cite{Blomer-Brumley}:
\begin{align}\label{2eq: Kim-Sarnak}
	|\Im (t_{f })| \leqslant \frac 7 {64},
\end{align}
but it is known that $t_f$ is real for $F = \BQ$ or $\BQ (\sqrt{d_F})$ with $d_F =  -3,-4,-7,-8, - 11  $ (see \cite[\S 7.6]{EGM}). Accordingly, define $Y_{\mathrm{KS}} =  (-\infty, \infty) \cup i \hskip -1.5 pt \left[ - \frac 7 {64}, \frac 7 {64} \right]$.
The Fourier expansion of $f $ is of the form:  
\begin{align}\label{2eq: Fourier expansion}
	f (  g_{\infty}) = \sum_{ n \shskip \in \shskip \frO' \smallsetminus \{0\}} \frac {a_f (n \frD)} {\sqrt{ \RN (n \frD) }} W_{i t_f} (\begin{pmatrix}
		n & \\ & 1
	\end{pmatrix} g_{\infty}), \quad g_{\infty} \in \GL_2 (F_{\infty}). 
\end{align}
As indicated by the notation, the Fourier coefficient $a_f (\frn) = a_f (n \frD)$  only depends on the ideal $\frn = n \frD$.  
Let  $\lambda_f (\frn)$ denote the $\frn$-th Hecke eigenvalue of $f$. It is known that $\lambda_f (\frn)$ are real.  We have the Hecke relation:
\begin{align}\label{3eq: Hecke relation}
	\lambda_f (\frn_1) \lambda_f (\frn_2) = \sum_{ \frd | (\frn_1, \frn_2) } \lambda_f \big(\frn_1 \frn_2 / \frd^2 \big) . 
\end{align}
 As usual, there is a constant $C_f$ so that 
\begin{align}\label{2eq: af = lambda f}
	a_f (\frn) = C_f \lambda_f (\frn)
\end{align}
for any nonzero integral ideal $\frn$. By the Rankin--Selberg method, we have
 \begin{align}\label{3eq: Cf2 = L(1, Sym2)}
		  {  |C_f|^2  } {  }  = \frac {   \mathrm{Pl} (t_f) } {  2  L(1, \mathrm{Sym^2} f) }  .
\end{align}   
%Note that we have included the factors $4$ and $8\pi$ in \eqref{1eq: defn of Pl(t)}.

\vskip 5 pt

\subsection{Kuznetsov Trace Formula}

%First, we    define the Kloosterman sum.  

%We have the Weil bound for Kloosterman sums:
%\begin{align}\label{3eq: Weil}
%	S (m, n ; c ) \Lt \sqrt{\RN (m \frD, n \frD, c  )} \sqrt{|\RN (c )|} \tau (c),
%\end{align}
%where the  brackets $(\cdot\, , \cdot\, , \cdot)$ denote greatest common divisor (of ideals).

\begin{defn}[Space of test functions] \label{defn: test functions}
	Let $S > \frac 1 2$. We set
	$ \mathscr{H} (S ) $ to be the space of functions $h (t) $ which extends to an even holomorphic function on the strip
	$\big\{ t + i \sigma    : | \sigma | \leqslant S \big\}$ such that  
	\begin{align*}
		h  (t + i \sigma) \Lt e^{-\pi |t|} (1+|t|)^{- N}, 
	\end{align*}
	holds uniformly for some $N > 6$. 
	
	%We also let $h : \BR \ra \BC $ denote the restriction of $h$ on the diagonal embedding of $ \BR$ in $\bfra$, namely, $ h (t) = \prod_{v | \infty} h_{v} (t)  $  for $ t \in \BR$. 
\end{defn}

\begin{defn}
	[Bessel kernel] \label{defn: Bessel kernel}
	
	Let  $s  \in  \BC  $. 
	
	{\rm(1)} When  $F_{\infty} = \BR$, for $x  \in \BR_+$ we define
	\begin{align*}
		&B_{s} (x)  = \frac {\pi} {\sin (\pi s) } \big( J_{-2 s} (4 \pi \sqrt {x }) - J_{2 s} (4 \pi \sqrt {x }) \big), \\
		&B_{s} (-x )   % = \frac {\pi} {\sin (\pi s) } \big( I_{-2 s} (4 \pi \sqrt {x }) - I_{2 s} (4 \pi \sqrt {x }) \big) 
		=   {4 \cos (\pi s)}    K_{2 s} (4 \pi \sqrt {x }) .
	\end{align*}
	
	{\rm(2)} When  $F_{\infty} = \BC$, for $z \in \BC^{\times}$ we define
	\begin{equation*}
		B_{s} (z ) =  \frac {2\pi^2} {\sin (2\pi s) } \big( { \textstyle  J_{-2 s} (4 \pi \sqrt {z}) J_{- 2s} (4 \pi \sqrt { \widebar z}) - J_{2 s} (4 \pi \sqrt {z}) J_{ 2s} (4 \pi \sqrt { \widebar z}) } \big). 
	\end{equation*}  
	
%	It is understood that when $  s \in \BZ $ or $  2 s \in \BZ $ in {\rm(1)} or {\rm(2)}, respectively,  the formulae above should be replaced by their limit. 
	 
\end{defn}	

The Kuznetsov trace formula of Bruggeman and Miatello in the spherical case is as follows. See \cite[Proposition 3.5]{Qi-GL(3)} or \cite[Proposition 1]{Venkatesh-BeyondEndoscopy}.

\begin{prop}[Kuznetsov trace formula]\label{prop: Kuznetsov}
	Let $h (t)$ be a test function in	$ \mathscr{H} (S) $   and define 
	\begin{align}\label{1eq: defn Bessel integral}
	  	\SDH = \int_{-\infty}^{\infty} h (t) \nd \shskip \mu (t), \quad \SDH (x) = \int_{-\infty}^{\infty} h (t)  B_{i t} (x ) \nd \shskip \mu (t), \quad \quad \text{$x\in F_{\infty}^{\times}$}. 
	\end{align} 
For nonzero integral ideals $\frm = m \frD$ and $  \frn = n \frD$ we have
	\begin{equation}\label{1eq: Kuznetsov} 
		\begin{aligned}
			 	\sum_{f \in  \SB  } \hskip -1pt \omega_f  h  ( t_f )   \lambda_f  ( \frm ) &    {\lambda_f  ( \frn )}   + \frac {1}  {4\pi} c_0 \int_{-\infty}^{\infty} \hskip -2pt  \omega (t) h ( t ) 
			\tau_{it} (\frm )  {\tau_{it} (\frn )} \shskip   \nd t  \\
		&	=       c_1 \delta_{\frm,   \frn}  \SDH  + c_2   \sum_{ \epsilon \, \in \shskip \frOO^{\sstimes} \hskip -1pt / \frOO^{\sstimes 2} } \sum_{c \, \in \shskip \frO \smallsetminus \{0\} } \frac {S ( m ; \epsilon n  ; c  ) } { |\RN (c  )| } \SDH \bigg( \frac {\epsilon m n } {  c^2    }  \bigg),
		\end{aligned}
	\end{equation}
	where 
	\begin{align}\label{1eq: omegas}
		\omega_f = \frac {|C_f|^2} {\mathrm{Pl}(t_f)} = \frac {  1  } { 2 L(1, \mathrm{Sym^2} f) }, \quad \quad \omega (t) =  \frac { 1 } {|\zeta_F(1+2it)|^2}, 
	\end{align} 
%the divisor function	$\tau_s (\frn)$ is defined in {\rm\eqref{1eq: defn of tau (n)}},
 $\delta_{\frm,   \frn} $ is the Kronecker $\delta$ that detects $\frm = \frn$, %the Kloosterman sum $S ( m ;   n  ; c  )$ is defined in {\rm\ref{2eq: defn Kloosterman KS}},  
 and   $c_0$, $c_1$, and $c_2$ are given by
	\begin{align} \label{3eq: constants, Q}
	c_0 = 1, \qquad c_1 = \frac {1} {8 \pi^{2 }  }  , \qquad c_2 =   \frac {1} {16 \pi^{2 } }  ,
	\end{align}
if $F = \BQ$, and 
\begin{align} \label{3eq: constants, C}
	c_0 =   \frac  { 2\pi  }  {w_F \sqrt{|d_F|} } , \quad c_1 = \frac {\sqrt{|d_F|}} {8 \pi^{3}  }  , \quad c_2 =   \frac {1} {16 \pi^{3}   }  ,
\end{align}
if $F = \BQ(\sqrt{d_F})$. 
\end{prop}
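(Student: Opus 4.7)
The plan is to follow the standard Poincar\'e series derivation of Bruggeman--Miatello as carried out in \cite[Proposition 3.5]{Qi-GL(3)} and \cite[Proposition 1]{Venkatesh-BeyondEndoscopy}, and to reconcile the resulting formula with the arithmetic normalizations fixed in Sections \ref{sec: notation}--\ref{sec: automorphic forms} so that the constants \eqref{3eq: constants, Q}--\eqref{3eq: constants, C} come out correctly.

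First I would construct, for each $m\in\frO'\smallsetminus\{0\}$ and each test function $h\in\mathscr{H}(S)$, a seed $f_m$ on $N(F_\infty)\backslash\PGL_2(F_\infty)/K_\infty$ whose spherical Whittaker transform against $W_{it}$ equals $h(t)\nd\mu(t)$; the existence of such a seed is the Harish-Chandra inversion attached to the Plancherel measure \eqref{1eq: defn Plancherel measure}, and the decay condition $S>\tfrac12$ in Definition \ref{defn: test functions} guarantees absolute convergence of the Poincar\'e series
\begin{equation*}
P_m(g;h)=\sum_{\gamma\in N(\frO)\backslash \PGL_2(\frO)} \psi_{\infty}(m\cdot)\,f_m(\gamma g).
\end{equation*}
Next I would compute $\langle P_m(\cdot;h),P_n(\cdot;h)\rangle$ in two ways.

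Spectrally, expanding along $\SB$ and the Eisenstein continuous spectrum and substituting \eqref{2eq: Fourier expansion}--\eqref{3eq: Cf2 = L(1, Sym2)}, the Plancherel factor $|C_f|^2/\mathrm{Pl}(t_f)$ telescopes to $\omega_f$ and produces $\sum_{f\in\SB}\omega_f h(t_f)\lambda_f(\frm)\lambda_f(\frn)$ on the left-hand side of \eqref{1eq: Kuznetsov}; the Eisenstein contribution similarly gives the continuous integral with weight $\omega(t)=1/|\zeta_F(1+2it)|^2$ via the Fourier coefficient $\tau_{it}(\frn)/|\zeta_F(1+2it)|$ of the unitary Eisenstein series. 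Geometrically, unfolding one Poincar\'e series by the Bruhat decomposition $\PGL_2(F)=N(F)\sqcup N(F)wN(F)$ splits the inner product into the identity cell (contributing $c_1\delta_{\frm,\frn}\SDH$) and the big-cell sum; after parametrizing the latter by $c\in\frO\smallsetminus\{0\}$ modulo $\frOO^\times$ and summing over $\frOO^\times/\frOO^{\times 2}$ to account for the square ambiguity on the diagonal torus, the archimedean orbital integral is identified with the Bessel kernel $B_{it}(\epsilon mn/c^2)$ of Definition \ref{defn: Bessel kernel}, as worked out in \cite{Qi-VO}.

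The hard part will be the final bookkeeping of constants together with the Bessel identification on the big cell in the complex case. The covolume of $N(\frO)$ in $N(F_\infty)$ contributes $\sqrt{|d_F|}$ when $F$ is imaginary quadratic, the self-dual Haar measure supplies the powers of $\pi$, the unit group produces $w_F$ (and the Eisenstein residue the factor $2\pi/(w_F\sqrt{|d_F|})$ entering $c_0$), and \eqref{3eq: Cf2 = L(1, Sym2)} fixes the Plancherel normalization. The explicit evaluation of the big-cell integral — in particular the product $J_{-2s}(4\pi\sqrt z)J_{-2s}(4\pi\sqrt{\widebar z})$ appearing in the complex case of Definition \ref{defn: Bessel kernel} — is where the $J$-versus-$K$ dichotomy of the archimedean transform genuinely splits, and this is the step where most of the delicate computation of \cite{Qi-VO} is concentrated.
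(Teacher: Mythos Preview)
Your sketch is a correct outline of the standard Bruggeman--Miatello derivation, and it is precisely the argument behind the references the paper cites. Note, however, that the paper itself does \emph{not} supply a proof of this proposition: it simply states the formula and refers the reader to \cite[Proposition~3.5]{Qi-GL(3)} and \cite[Proposition~1]{Venkatesh-BeyondEndoscopy}, adding only the remark that the constants in \cite[(3.18), (3.19)]{Qi-GL(3)} have been adjusted to match the normalization of $\mathrm{Pl}(t)$ adopted here. So your proposal is not so much a different route as it is an unpacking of what those citations contain; if the goal is to match the paper, a one-line citation together with the constant-tracking remark suffices.

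Two small corrections to your sketch. First, the reference \cite{Qi-VO} concerns the Vorono\"i--Oppenheim summation, not the archimedean orbital integral on the big Bruhat cell; the Bessel identification you need is rather in \cite{Qi-Bessel} (and summarized in \cite{Qi-GL(3)}). Second, the inversion you invoke to build the seed $f_m$ from $h$ is really the Kontorovich--Lebedev / Bessel inversion for the Whittaker model, not Harish-Chandra inversion per se; the distinction matters when you track which transform $h\mapsto \SDH(x)$ actually appears and why the Plancherel density $\nd\mu(t)$ enters as it does.
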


For our normalized $\mathrm{Pl}  (t)$ the constants in \cite[(3.18), (3.19)]{Qi-GL(3)} have been modified here accordingly.  Note that $c_0 = \gamma_1$ and $c_2 =   c_1 /2 \sqrt{|d_F|}$.

By the discussions below    \cite[Lemma 2.2]{Qi-Liu-LLZ}, it is known that the lower bound $|\zeta_F(1+2it)| \Gt  \log  (3+|t|) $ holds, and hence
\begin{align}\label{3eq: bound for omega(t)} 
	\omega (t) \Lt  \log^2 (3+|t|) . 
\end{align}

\subsection{Vorono\"i Summation Formula}

The Vorono\"i summation formula for the  divisor function $\tau (\frn) = \tau_0 (\frn)$ is as follows.  Compare \cite[(4.49)]{IK}.

\begin{prop}[Vorono\"i summation formula] \label{prop: Voronoi tau}	Let $ a, \widebar{a},  c \in \frO$ with $c \neq 0$ be such that $(a, c) = (1)$ and $a \widebar{a} \equiv 1 (\mod c)$. For $\varww (x) \in C^{\infty}_c (F^{\times}_{\infty}) $  we define its Hankel transform $ \widetilde{\varww}_{0} (y) $ with  Bessel kernel $B_0 $  {\rm(}as in Definition {\rm\ref{defn: Bessel kernel}}{\rm)}{\rm:}
	\begin{align}\label{3eq: Hankel, global}
		\widetilde{\varww}_{0} (y) =    \int_{F^{\sstimes}_{\scalebox{0.55}{$\infty$} } }  \varww (x) B_{0}    ( x y)   \nd x, \qquad  y \in F^{\times}_{\infty} ,
	\end{align} 
 and   define its associated Mellin integrals{\rm:} %$ \widetilde{\varww}_0 (0)$ and  $ \widetilde{\varww}_0' (0)$ by
\begin{align}\label{3eq: Mellin}
	\widetilde{\varww}_0 (0) =	 \int_{F^{\sstimes}_{\scalebox{0.55}{$\infty$} } }  \varww (x)   \nd x , \qquad \widetilde{\varww}_0' (0) =	 \int_{F^{\sstimes}_{\scalebox{0.55}{$\infty$} } }  \varww (x)    \log   \|x\|_{\infty}  \nd x . 
\end{align}   Then we have the identity
	\begin{align}\label{app: Voronoi, tau} 
		\begin{aligned}
			 \frac {{|\RN(c)|}}   {\sqrt{|d_F|}} \hskip -2 pt 	 \sum_{n \shskip \in \shskip \frO '}     \hskip -2 pt  \psi_{\infty} \Big( \frac {a n} {c} \Big)        \tau  (n  \frD)   \varww   (n )        
	 	=	    \gamma_{1}  \widetilde{\varww}_0' (0)  + 2 \bigg(   \gamma_{0}  -\gamma_{1}  \log  \frac {{|\RN(c)|}}   {\sqrt{|d_F|}}     \bigg) \widetilde{\varww}_0 (0) & \\
			   	+   \frac 1 {\sqrt{|d_F|}}   \sum_{ n \shskip \in  \shskip \frO' \smallsetminus \{0\} }  \hskip -2 pt  \psi_{\infty} \Big( \hskip -2 pt - \frac {\widebar{a} n} {c} \Big) \tau (n \frD ) \widetilde{\varww}_{0} \Big(\frac {n}{c^2}\Big) & ,
		\end{aligned}
	\end{align}
	where the constants $\gamma_{0} $ and  $\gamma_{1} $ are defined as in {\rm\eqref{2eq: zeta (s), s=1}}. 
\end{prop}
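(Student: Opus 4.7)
The plan is to establish \eqref{app: Voronoi, tau} by Mellin inversion combined with the functional equation of the Dedekind zeta function squared. We may assume $\varww$ has compact support in $F^{\times}_{\infty}$ so that all sums and integrals converge absolutely; the general case follows by approximation.

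First, I would introduce the Mellin transform $\widehat{\varww}(s) = \int_{F_{\infty}^{\times}} \varww(x) \|x\|_{\infty}^{-s} \nd x$ and the Estermann-type Dirichlet series
\[
E(s; a/c) = \sum_{n \in \frO' \smallsetminus \{0\}} \frac{\psi_{\infty}(an/c)\, \tau(n\frD)}{\|n\|_{\infty}^{s}}, \qquad \Re(s) > 1,
\]
so that Mellin inversion rewrites the left-hand side of \eqref{app: Voronoi, tau} as $\frac{|\RN(c)|}{\sqrt{|d_F|}} \cdot \frac{1}{2\pi i} \int_{(\sigma)} \widehat{\varww}(s)\, E(s; a/c) \nd s$ with $\sigma > 1$. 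The next task is to meromorphically continue $E(s; a/c)$: partitioning the $n$-sum into residue classes modulo $c\frO'$, opening $\tau = 1 \ast 1$, and collapsing the resulting inner character sums to Ramanujan-type sums \eqref{2eq: Ramanujan} expresses $E(s; a/c)$ as a product of explicit $|\RN(c)|$- and $|d_F|$-power factors with $\zeta_F(s)^2$ (plus a piece regular at $s=1$). This yields a double pole at $s=1$ together with a functional equation $E(s; a/c) \leftrightarrow E(1-s; -\widebar a/c)$ whose Archimedean factor is the ratio of gamma factors for $\zeta_F$.

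Second, I would shift the contour from $\Re(s) = \sigma$ past the double pole at $s=1$ to $\Re(s) = 1 - \sigma$, with the shift justified by the rapid decay of $\widehat{\varww}$ on vertical lines. Using \eqref{2eq: zeta (s), s=1} one has $\zeta_F(s)^2 = \gamma_1^2/(s-1)^2 + 2\gamma_0\gamma_1/(s-1) + O(1)$; combining this Laurent expansion with the Taylor expansions of $\widehat{\varww}(s)$ and of the power factor $(|\RN(c)|/\sqrt{|d_F|})^{-2s}$ around $s = 1$, and identifying $\widehat{\varww}(1)$ and $\partial_s \widehat{\varww}(s)|_{s=1}$ with $\widetilde{\varww}_0(0)$ and $\widetilde{\varww}_0'(0)$ via \eqref{3eq: Mellin}, produces exactly the first two terms on the right-hand side of \eqref{app: Voronoi, tau}.

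Third, on the shifted line I would apply the functional equation to convert the integrand to $\widehat{\varww}(s)\, E(1-s; -\widebar a/c)$ times the Archimedean factor. Opening $E(1-s; -\widebar a/c)$ as a Dirichlet series and recognizing the Archimedean factor as the Mellin symbol of the Bessel kernel $B_0$ from Definition \ref{defn: Bessel kernel} realizes the resulting expression as $\sum_{n \neq 0} \psi_{\infty}(-\widebar a n/c) \tau(n\frD)\, \widetilde{\varww}_0(n/c^2)$, up to the overall prefactor $1/\sqrt{|d_F|}$, matching the dual sum in the statement. The main technical obstacle is the careful bookkeeping at the double pole: the Laurent coefficients of $\zeta_F(s)^2$ and the derivative of the factor $(|\RN(c)|/\sqrt{|d_F|})^{-2s}$ must combine to yield precisely $2(\gamma_0 - \gamma_1 \log(|\RN(c)|/\sqrt{|d_F|}))$, since an off-by-one in this accounting would shift $\gamma_0$ by a multiple of the logarithm. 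A secondary technical point is the identification of the Archimedean gamma ratio with the Mellin transform of $B_0$; this is classical in the case $F = \BQ$ and was worked out in detail in \cite{Qi-VO} for imaginary quadratic $F$.
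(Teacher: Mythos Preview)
The paper's ``proof'' is a one-line citation: it specializes \cite[Corollary~1.4]{Qi-VO} with $\zeta = a/c$, $\fra = (1)$, $S = \{\frp : \frp \mid (c)\}$, $\frb = (c^2)$, noting that $h_F = 1$ guarantees every element of $F$ is a ratio of coprime integers. So there is no substantive in-paper argument to compare against; what you have written is essentially an unpacking of what lies behind that citation.

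Your outline is the standard Estermann/Mellin route and is correct in spirit. Two places where the sketch is loose and would need tightening in an actual write-up:
\begin{itemize}
\item In your second paragraph you say that opening $\tau = 1 * 1$ and sorting into residue classes ``expresses $E(s;a/c)$ as a product of explicit \ldots\ factors with $\zeta_F(s)^2$ (plus a piece regular at $s=1$)''. That is not quite how the decomposition goes: $E(s;a/c)$ is written as a finite linear combination of products of Hurwitz-type zeta functions (or, equivalently, via additive characters and the functional equation of $\zeta_F$ with shifts), not as $(\text{elementary factor}) \times \zeta_F(s)^2 + (\text{regular})$. What is true is that the \emph{principal part} at $s=1$ coincides with that of such a product, which is all you need for the residue calculation. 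The Ramanujan-sum remark \eqref{2eq: Ramanujan} is not really the mechanism here.
\item Your Mellin convention $\widehat{\varww}(s) = \int \varww(x)\|x\|_\infty^{-s}\,\nd x$ places the relevant value at $s=0$, not $s=1$: with this normalization $\widehat{\varww}(0) = \widetilde{\varww}_0(0)$ and $-\partial_s\widehat{\varww}(0) = \widetilde{\varww}_0'(0)$, whereas you write $\widehat{\varww}(1)$. This is only a bookkeeping slip (it interacts with where the Dirichlet-series pole sits after you pair $\widehat{\varww}$ with $E$), but since you yourself flag the residue accounting as the main hazard, it is worth getting straight.
\end{itemize}
Neither point is a genuine gap; both are exactly the details that \cite{Qi-VO} handles, which is why the authors simply cite it.
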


\begin{proof}
	Apply \cite[Corollary 1.4]{Qi-VO} with $\zeta = a/c$, $\fra = (1)$, $S = \big\{ \frp : \frp | (c) \big\}$, and  $\frb = (c^2)$. Note that every $\zeta \in F$ may be expressed as a fraction $\zeta = a/c$ with $(a, c) = (1)$ since the class number $h_F = 1$. 
\end{proof}

It will be more convenient to interpret the zero frequency as the limit:
\begin{align}\label{3eq: limit for 0}
	\gamma_{1}  \widetilde{\varww}_0' (0)  + 2 \bigg(   \gamma_{0}  -\gamma_{1}  \log  \frac {{|\RN(c)|}}   {\sqrt{|d_F|}}     \bigg) \widetilde{\varww}_0 (0) = \lim_{s \ra 0} \sum_{\pm} \frac {\zeta_F (1\pm 2s)  \widetilde{\varww}_{\pm s}  (0)} {  |   \RN ( c )^2 / d_F  | ^{ \pm  s}  }, 
\end{align}
where  $ \widetilde{\varww}_{s} (0) $ is the Mellin transform 
\begin{align}  \widetilde{\varww}_{s} (0) = \int_{F^{\sstimes}_{\scalebox{0.55}{$\infty$} } }  \varww (x) \|x\|_{\infty}^{s}    \nd x .
\end{align}
See \cite[Theorem 1.3]{Qi-VO}.

%For sums involving the divisor function, one may find the following estimates useful as $\tau (\frn) \Lt_{\vepsilon} \RN (\frn)^{\vepsilon}$. 

%\section{Preliminaries on the Gamma Function and Bessel Functions}

\section{Approximate Functional Equations}

Let $f \in \SB $ be a {(spherical)} Hecke--Maass cusp form for $ \PGL_2 (\frO) $ with Hecke eigenvalues $\lambda_f (\frn)$ and Archimedean parameter  $ t_f \in  Y$. 
The $L$-function attached to $ f    $ is defined by
\begin{equation}
	L (s,  f   ) = \sum_{\frn \shskip \subset \shskip \frO }  \frac {\lambda_f(  \frn)  } {\RN(\frn)^{  s} }.
\end{equation}
The completed $L$-function for $ f $ is
$\Lambda (s,  f   ) =  \RN(\frD)^{ s } \gamma (s, t_f  ) L (s,  f  ) $, where 
\begin{equation}\label{4eq: defn of gamma (s, f)}
	\gamma  (s, t) =  (N \pi )^{-  N s } \Gamma \bigg(\frac {N(s- i t)} 2 \bigg)   \Gamma \bigg(\frac {N(s+ i t)} 2 \bigg) .
\end{equation}
Recall that $N = 1$ or $2$ according as $F$ is rational or imaginary quadratic. It is known that $\Lambda (s,  f )$ is entire and  has the functional equation
\begin{align*}
	\Lambda (s,  f  ) = \Lambda   (1-s,  f   ).
\end{align*} 
For $\mathrm{Re} (s) > 1$, it follows from the Hecke relation \eqref{3eq: Hecke relation} that
\begin{align*}
	\begin{aligned}
		L (s,  f   )^2 & = \mathop{\sum\sum}_{\frn_1, \frn_2 \subset \frO}    \sum_{   \frd | (\frn_1, \frn_2)  } \frac { \lambda_f \big(\frn_1 \frn_2 / \frd^2 \big) } {{\RN(\frn_1 \frn_2)^s }}   = \sum_{ \frd \subset \frO} \frac 1 {\RN (\frd)^{2s} } \mathop{\sum\sum}_{\frn_1, \frn_2 \subset \frO} \frac {\lambda_f (\frn_1 \frn_2)   } {\RN (\frn_1 \frn_2)^s} , 
	\end{aligned}
\end{align*} 
and hence 
\begin{align}\label{5eq: L(s,f) square}
	 L (s,  f   )^2 = \zeta_F (2s) \sum_{\frn \subset \frO} \frac {\lambda_f (\frn) \tau (\frn) } {\RN (\frn)^s}, 
\end{align}
where $\tau (\frn)$ is the divisor function. 

Similarly, if $\lambda_f (\frn)$ are replaced by $\tau_{i t} (\frn) $, then  
\begin{align}\label{5eq: L (E)}
	  \zeta_F   (s+it )   \zeta_F   (s-it ) = \sum_{\frn \shskip \subset \shskip \frO }  \frac {\tau_{it} (  \frn)  } {\RN(\frn)^{  s} } ,
\end{align}
and  
\begin{align}\label{5eq: L (E) square}
	\zeta_F   (s+it )^2   \zeta_F   (s-it )^2 = \zeta_F (2s) \sum_{\frn \shskip \subset \shskip \frO }  \frac {\tau_{it} (  \frn)  \tau (\frn)} {\RN(\frn)^{  s} } .
\end{align}

We have the Approximate Functional Equations for   $L (s,    f )$ and $L (s,    f )^2$ (see \cite[Theorem 5.3]{IK}):
\begin{equation}
	\label{5eq: AFE, 1} 
	L \big(\tfrac 1 2,    f \big)   =  2 \sum_{\frn \shskip \subset \shskip \frO }  \frac {   \lambda_f  (\frn )  } {\sqrt{\RN  (  \frn   )}  }     V_1  \big(  \RN  \big(  \frn  \frD^{-1} \big); t_f  \big)   , 
\end{equation}
\begin{equation}
	\label{5eq: AFE, 2} 
	L \big(\tfrac 1 2,    f \big)^2   =  2 \sum_{\frn \shskip \subset \shskip \frO }  \frac {   \lambda_f  (\frn ) \tau (\frn) } { \sqrt{\RN  (  \frn   )}  }     V_2  \big(  \RN  \big(  \frn  \frD^{-2} \big); t_f  \big)   , 
\end{equation}
with 
\begin{equation}\label{5eq: def of V1 (y, t)} 
		V_1 (y; t ) = \frac 1  {2 \pi i} \int_{(3)} 
		G  (v, t)   y^{ -  v} \frac { \nd   v } {v} ,   
\end{equation}
\begin{equation}\label{5eq: def of V2 (y, t)}
	V_2 (y; t ) = \frac 1  {2 \pi i} \int_{(3)} 
	G  (v, t)^2 \zeta_F (1+2v)   y^{ - v} \frac { \nd v } {v} ,   
\end{equation}
for $y > 0$, where 
\begin{align}\label{4eq: def G}
	G (v, t) = \frac {\gamma \big(\frac 1 2 + v , t \big)  }  {\gamma \big(\frac 1 2, t   \big)   } \cdot  e^{v^2  }  .
\end{align}   
In parallel, we have
\begin{align}
	\label{5eq: AFE zeta, 1} 
	\left| \zeta_F \big(\tfrac 1 2 + it \big) \right|^2  &  =  2 \sum_{\frn \shskip \subset \shskip \frO }  \frac {   \tau_{it}  (\frn )  } {\sqrt{\RN  (  \frn   )}  }     V_1  \big(  \RN  \big(  \frn  \frD^{-1} \big); t  \big) + O  \big( e^{-t^2/2} \big)  , \\
	\label{5eq: AFE zeta, 2} 
	\left| \zeta_F \big(\tfrac 1 2 + it \big) \right|^4  & =  2 \sum_{\frn \shskip \subset \shskip \frO }  \frac {   \tau_{it}  (\frn ) \tau (\frn) } { \sqrt{\RN  (  \frn   )} }     V_2  \big(  \RN  \big(  \frn  \frD^{-2} \big); t  \big)  + O  \big( e^{-t^2 } \big) , 
\end{align} 
in which the errors arise from the polar terms.

\begin{lem}\label{lem: afq}  
 For $t \in Y_{\mathrm{KS}}$  define 
	\begin{align*}%\label{1eq: defn conductor}
		\RC  (t) =        \sqrt{\tfrac 1 4 +  t^2}   .
	\end{align*}

{\rm(1)} Let  $ U > 1  $,  $A > 0$, and $\vepsilon > 0$. We have
	\begin{align}\label{1eq: derivatives for V(y, t), 1}
		%p \left(\tfrac 1 2, t \right)^2 
		V_1 (y; t ) \Lt_{ A  } %\RP (t)^{12 } 
		\bigg(  1 + \frac {y} {\RC (t)^{N} } \bigg)^{-A} , \quad V_2 (y; t ) \Lt_{ A  } %\RP (t)^{12 } 
		\bigg(  1 + \frac {y} {\RC (t)^{2N} } \bigg)^{-A},
	\end{align}   and
	\begin{align}
		\label{1eq: approx of V1}
		V_1  (y; t) & = \frac 1 {2   \pi i } \int_{ \vepsilon - i U}^{\vepsilon + i U}  G (v, t)    y^{ - v}   \frac {\nd v} {v} + O_{\vepsilon } \bigg( \frac {\RC (t)^{  \vepsilon} } {y^{ \vepsilon} e^{U^2 / 2} } \bigg),  \\
		\label{1eq: approx of V2}
		V_2  (y; t) & = \frac 1 {2   \pi i } \int_{ \vepsilon - i U}^{\vepsilon + i U}  G (v, t)^2 \zeta_F (1+2v)    y^{ - v}   \frac {\nd v} {v} + O_{\vepsilon } \bigg( \frac {\RC (t)^{  \vepsilon} } {y^{ \vepsilon} e^{U^2  } } \bigg) .
	\end{align}  

{\rm(2)} Define 
\begin{align}\label{4eq: defn of psi}
	\psi  (t) = \frac {N} 2 \lp \frac {\Gamma'} {\Gamma} \lp \frac {N (1+2 i t)} {4} \rp +   \frac {\Gamma'} {\Gamma} \lp \frac {N (1 - 2 i t)} {4} \rp -  2   \log (N \pi) \rp . 
\end{align}
{\rm(}This $\psi  $   should not be confused with the additive character as it stands here for the digamma function.{\rm)}   
We have
\begin{align}\label{4eq: asymptotic for V1}
	V_1 (y; t) = 1 + O_{A }   \lp \lp \frac {y}  {\RC  (t)^N} \rp \hskip -8.5 pt {\phantom{\Big)}}^{A } \rp  , 
\end{align}
for $1 \Lt y <  \RC (t)^{N}$, and
\begin{align}\label{4eq: asymptotic for V2}
	V_2 (y; t) = \gamma_{0} + \gamma_{1} \lp \psi (t)  -  \log   \sqrt{y}  \rp     +  O_{A }   \lp \lp \frac {y}  {\RC  (t)^{2N}} \rp \hskip -8.5 pt {\phantom{\Big)}}^{A } \rp, 
\end{align}
for $1 \Lt y <  \RC (t)^{2N}$, where   $\gamma_{0}$ and  $\gamma_{1}$ are defined as in {\rm\eqref{2eq: zeta (s), s=1}},   and 
	  \begin{align}\label{4eq: asymp of psi}
	  	\psi (t) = N \log \lp \RC (t) / 2\pi \rp  + O \big(1/ \RC (t)^2 \big).
	  \end{align} 
\end{lem}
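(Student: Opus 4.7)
The plan is to prove everything by contour shifting applied to the Mellin--Barnes integrals \eqref{5eq: def of V1 (y, t)} and \eqref{5eq: def of V2 (y, t)}, combined with Stirling's formula for the Gamma and digamma functions. Throughout, the Gaussian factor $e^{v^2}$ in \eqref{4eq: def G} provides rapid vertical decay, while the ratio $\gamma\big(\frac12+v,t\big)/\gamma\big(\frac12,t\big)$ is, by Stirling, of size $\mathrm{C}(t)^{Nv}$ times an essentially bounded factor on vertical strips bounded away from the poles of Gamma.

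For Part (1), the bounds \eqref{1eq: derivatives for V(y, t), 1} are obtained by the standard two-sided shift: to prove the decay for $y$ large, shift the contour to $\mathrm{Re}(v)=A$, which introduces no poles (since $\mathrm{Re}\big(\frac12+v\pm it\big)>0$ for $t\in Y_{\mathrm{KS}}$), pick up a factor $\mathrm{C}(t)^{NA}/y^A$, and absorb the remaining vertical integral using $e^{v^2}$; for $y$ small keep the contour at $\mathrm{Re}(v)=\epsilon$. The truncated approximations \eqref{1eq: approx of V1} and \eqref{1eq: approx of V2} follow by shifting the tails $|\mathrm{Im}(v)|>U$ of the contour at $\mathrm{Re}(v)=\epsilon$ and estimating them by $\mathrm{C}(t)^{\epsilon}y^{-\epsilon}e^{-U^2/2}$ (resp.\ $e^{-U^2}$ after using the square $G(v,t)^2$), the polynomial growth of $\zeta_F(1+2v)$ on the line $\mathrm{Re}(v)=\epsilon$ being absorbed into the $\vepsilon$-notation.

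For Part (2), shift the contour past $v=0$ to $\mathrm{Re}(v)=-A$ with $A>0$ small. In $V_1$, the integrand has a simple pole at $v=0$ with residue $G(0,t)=1$, giving the main term $1$ in \eqref{4eq: asymptotic for V1}; the shifted integral is bounded by $(y/\mathrm{C}(t)^N)^A$ after the Stirling estimate $G(-A,t)\ll \mathrm{C}(t)^{-NA}$. For $V_2$, the integrand has a \emph{triple} pole at $v=0$ coming from $\zeta_F(1+2v)/v$ (which contributes $\gamma_1/(2v^2)+\gamma_0/v+O(1)$ by \eqref{2eq: zeta (s), s=1}) together with $G(v,t)^2$ (which is regular at $v=0$ with $G(0,t)^2=1$ and logarithmic derivative equal to $\psi(t)$ as defined in \eqref{4eq: defn of psi}, via $\frac{\mathrm{d}}{\mathrm{d}v}\log\gamma\big(\frac12+v,t\big)\big|_{v=0}=\psi(t)$). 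Expanding $G(v,t)^2 y^{-v}/v$ as a Laurent series at $v=0$,
\[
\frac{G(v,t)^2 y^{-v}}{v}=\frac{1}{v}+\big(2\psi(t)-\log y\big)+O(v),
\]
and multiplying by $\gamma_1/(2v)+\gamma_0+O(v)$, the residue at $v=0$ equals
\[
\gamma_0+\tfrac{\gamma_1}{2}\big(2\psi(t)-\log y\big)=\gamma_0+\gamma_1\big(\psi(t)-\log\sqrt{y}\big),
\]
which gives exactly the main term in \eqref{4eq: asymptotic for V2}; the remaining contour at $\mathrm{Re}(v)=-A$ is bounded by $(y/\mathrm{C}(t)^{2N})^A$ as before. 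Finally, \eqref{4eq: asymp of psi} is immediate from the classical Stirling asymptotic $\Gamma'/\Gamma(s)=\log s-1/(2s)+O(1/|s|^2)$ applied at $s=\frac{N(1\pm 2it)}{4}$, noting that the two logarithms combine into $N\log(\mathrm{C}(t)/2\pi)$ after the $-2\log(N\pi)$ in \eqref{4eq: defn of psi} is incorporated.

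The only delicate step is the residue computation for $V_2$: care must be taken to differentiate $G(v,t)^2$ at $v=0$ and to identify the resulting logarithmic derivative of the Gamma factors with the digamma combination $\psi(t)$ in \eqref{4eq: defn of psi}. Once this is set up correctly, the rest is routine Stirling analysis and contour estimation, and no new idea beyond the standard technique of approximate functional equations (as in \cite{IK}) is required.
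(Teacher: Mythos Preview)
Your approach is the same as the paper's---contour shifting with Stirling for part~(1), residue extraction at $v=0$ for part~(2)---and your residue computation for $V_2$ is correct. Two small corrections are in order. First, the pole of the $V_2$-integrand at $v=0$ is a \emph{double} pole, not a triple one: $\zeta_F(1+2v)/v$ has order two there and $G(v,t)^2 y^{-v}$ is regular, exactly as your own Laurent expansion shows. Second, your restriction ``$A>0$ small'' does not give the statement for arbitrary $A$; the paper handles this by first invoking the truncated form \eqref{1eq: approx of V1}--\eqref{1eq: approx of V2} with $U=\sqrt{\RC(t)}$ (whose error is already negligible), and \emph{then} shifting the truncated integral to $\mathrm{Re}(v)=-A$. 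On the segment $|\mathrm{Im}(v)|\leqslant \sqrt{\RC(t)}$ one never meets the Gamma poles of $\gamma\big(\tfrac12+v,t\big)$ when $|t|$ is large, so $A$ may be taken arbitrarily large, as required.
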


%Finally, the following asymptotic formulae will be useful for determining the main terms. 

\begin{proof}
	The  asymptotics in (1) are analogous to those in \cite[Lemma 5.1 (1)]{Qi-GL(3)}. See also \cite[Proposition 5.4]{IK}, \cite[Lemma 1]{Blomer}, and \cite[Lemma 3.7]{Qi-Gauss}. 
	To derive  \eqref{4eq: asymptotic for V1} and \eqref{4eq: asymptotic for V2}, we choose $U = \sqrt{\RC (t)}  $, say, and shift the integral contour in \eqref{1eq: approx of V1} and \eqref{1eq: approx of V2} from $\mathrm{Re}(v) = \vepsilon$  further down to $\mathrm{Re}(v) = - A $; the main term    is the residue from the pole at $v = 0$ while the error term is from the Stirling formula. Note that the integrand in \eqref{5eq: def of V2 (y, t)} has a double pole at $v = 0$, and its residue may be computed using
	\begin{align*}%\label{2eq: zeta (s), s=1}
		\zeta_F (1 + 2v) = \frac {\gamma_{1}} {2 v} + \gamma_{0} + O (|v|), \qquad v \ra 0, 
	\end{align*}  
by \eqref{2eq: zeta (s), s=1}, and
	\begin{align*}%\label{2eq: zeta (s), s=1}
		G (v, t) = 1 +     \psi  (t)  v + O \big(|v|^2 \big), \qquad  v \ra 0,
	\end{align*} 
by \eqref{4eq: defn of gamma (s, f)} and \eqref{4eq: def G}. 
Moreover, \eqref{4eq: asymp of psi} follows readily from 
\begin{align*}%\label{1eq: asymp for psi}
	\frac {\Gamma'} {\Gamma} (s) = \log s - \frac 1 {2s} + O \bigg(\frac 1 {|s|^2} \bigg),  
\end{align*}
for $|s| \ra \infty$ and $|\arg (s)| \leqslant \pi -\delta < \pi$. 
\end{proof}

\section{Choice of Weight Function}\label{sec: choice of h}

%Now we recollect the  definition of the weight function $k  (t)$ as in \eqref{1eq: defn of kq}.

\begin{defn}\label{def: weight k}
	Let  $1 \Lt T^{\vepsilon} \leqslant M \leqslant T^{1-\vepsilon} $. Define the function  
	\begin{align}\label{5eq: defn k(nu)}
		k  (t) = k_{T, M} (t) =   e^{- (t  - T)^2 / M^2} + e^{-(t + T)^2 / M^2}   .
	\end{align}   
\end{defn}

%Now we define the first and second smoothly weighted twisted moments:
%\begin{align}
% \SM_q (\frm ) =		\SM_q (\frm; T, M) = \sumh_{f \in  \SB  } \hskip -1pt   k  ( t_f ) \lambda_f  ( \frm )    L \big( \tfrac 1 2 , f \big)^q .  
%\end{align} 

Next we introduce an unsmoothing process as in \cite[\S 3]{Iviv-Jutila-Moments} by an average of the weight $k_{T, M} $ in the $T$-parameter.  

\begin{defn}
	Let $ 3 H \leqslant T  $ and $T^{\vepsilon} \leqslant M \leqslant H^{1-\vepsilon}  $. Define  
	\begin{align}\label{5eq: defn w(nu)}
		\varww (t) =  \varww_{T, M, H} (t) = \frac 1 { \sqrt{\pi}  M} \int_{\, T- H}^{T + H} k_{K, M} (t) \nd K  .
	\end{align}
\end{defn}

Let  $\chi (t) = \chi^{}_{T, H} (t)$ denote the characteristic function for $ ||t| - T | \leqslant H$ ($t$ real).  
By adapting the arguments in \cite[\S 3]{Iviv-Jutila-Moments} (see also \cite[\S 3]{BHS-Maass}\footnote{Note that the $1$ in \cite[(3.4)]{BHS-Maass} should be the characteristic function.}), it is easy to prove that
$\varww (t) - 1 $ is exponentially small
if $ | |t| - T | \leqslant H - M^{1+\vepsilon} $, 
\begin{align*}%\label{5eq: weight, 2}
	\varww (t) - \chi (t) = O \lp   \frac {M^3} {\big(  M  + \min  \big\{ ||t|-T \pm H|  \big\} \big)^3 } \rp,
\end{align*}
if $ ||t| - T \pm H | \leqslant M^{1+\vepsilon} $, % (this holds even for  $H$ close to $T$ and for $t \in i \big( \hskip -1.5pt -\frac 1 2 , \frac 1 2 \big)$), 
and $\varww (t)$ is exponentially small if otherwise. From these, along with \eqref{2eq: subconvex} and \eqref{3eq: bound for omega(t)}, one  may prove  the following lemmas (compare \cite[(3.6), (3.7)]{Iviv-Jutila-Moments}). 
 
\begin{lem}\label{lem: unsmooth, 1}
Let  $\lambda $  be a real constant.   	Suppose that $a_f \geqslant 0$ and that 
	\begin{align*}
		 \sumh_{f \in  \SB}  k   (t_f) a_f = O_{\lambda, \vepsilon}  \big( M T^{\lambda}  \big) 
	\end{align*}
for any $M$ with $ T^{\vepsilon} \leqslant M \leqslant T^{1-\vepsilon}  $. Then for  $ M^{1+\vepsilon} \leqslant 3 H \leqslant T  $ we have
\begin{align*}
	\mathop{\sumh_{f \in  \SB} }_{  |t_f - T| \shskip \leqslant H}    a_f =  \sumh_{f \in  \SB}  \varww  (t_f) a_f + O_{\lambda, \vepsilon} \big( M T^{\lambda} \big) . 
\end{align*}
\end{lem}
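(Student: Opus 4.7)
The plan is to estimate the difference $\sum^{h}_f (\varww(t_f) - \chi(t_f)) a_f$ by exploiting the three-region description of $\varww - \chi$ given just before the lemma: it is exponentially small both on the interior $||t|-T|\leqslant H - M^{1+\vepsilon}$ and on the exterior $||t|-T|> H + M^{1+\vepsilon}$, while in the transition zones $||t| - T \pm H | \leqslant M^{1+\vepsilon}$ it is bounded by $O\big(M^{3}/(M + d(t))^{3}\big)$ with $d(t) = \min\big\{||t|-T \pm H|\big\}$. The contribution from the two good regions is negligible once one knows any polynomial bound on $\sum^h k(t_f) a_f$, so the heart of the matter is the transition zones near $T+H$ and $T-H$.

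For each $T' \in \{T-H,\, T+H\}$ I would split the transition zone dyadically: for $j \geqslant 0$, let $I_j = \{t : 2^{j}M \leqslant |t-T'| \leqslant 2^{j+1} M\}$ and also allow $j = -\infty$ for the central piece $|t - T'| \leqslant M$. On $I_j$ we have $|\varww - \chi| \Lt 2^{-3j}$, so
\[
\bigg|\sum^h_{t_f \in I_j} (\varww(t_f) - \chi(t_f))\, a_f \bigg| \Lt 2^{-3j}\, \sum^{h}_{t_f \in I_j} a_f .
\]
Since $a_f \geqslant 0$ and $k_{T', 2^{j+1}M}(t) \geqslant e^{-1}$ on $\{|t-T'|\leqslant 2^{j+1}M\}$, positivity gives
\[
\sum^{h}_{t_f \in I_j} a_f \,\leqslant\, e \sum^h_{f \in \SB} k_{T', 2^{j+1}M}(t_f)\, a_f .
\]
The hypothesis applied with center $T'$ and width $M'' = 2^{j+1}M$ (noting $T' \asymp T$ since $3H \leqslant T$, so $(T')^{\lambda} \asymp T^{\lambda}$, and $M'' \leqslant M^{1+\vepsilon} \leqslant T^{1-\vepsilon}$ for relevant $j$) majorizes this by $O(2^{j}M\, T^{\lambda})$. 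Hence the $j$-th shell contributes $O(2^{-2j} M T^{\lambda})$, and summing a geometric series over $j \geqslant 0$ (with $j \Lt \vepsilon \log M$ since past the transition zone the difference is exponentially small) together with the two choices of $T'$ yields the desired $O(M T^{\lambda})$.

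The main obstacle is merely bookkeeping: one must verify that the hypothesis can indeed be applied at the shifted centers $T' = T \pm H$ with the dyadically scaled widths, and that the residual contributions from the exponentially small tails of $\varww - \chi$ outside the transition zone are absorbed (for this it suffices to invoke the Kim--Sarnak bound \eqref{2eq: Kim-Sarnak} together with the trivial Weyl law bound $\sum^h_{|t_f| \leqslant T} 1 \Lt T^{N}$, producing a contribution that is negligibly small in the sense of the paper's convention). No delicate cancellation is needed, only positivity and the decay rate of $\varww - \chi$.
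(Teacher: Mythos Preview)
Your approach is correct and matches what the paper intends: the paper gives no detailed proof, merely citing the three-region description of $\varww-\chi$ and referring to \cite[\S 3]{Iviv-Jutila-Moments} and \cite[\S 3]{BHS-Maass}; your dyadic decomposition of the transition zones combined with positivity is exactly the standard argument in those references.

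Two small points are worth tightening. First, you apply the hypothesis with center $T'=T\pm H$ rather than $T$. As the lemma is literally stated the bound is only assumed for $k_{T,M}$, and there is no way to control the shell $|t_f-(T\pm H)|\leqslant 2^{j+1}M$ using $k_{T,M''}$ with $M''\leqslant T^{1-\vepsilon}$ (one would need $M''\gtrsim H$). So you are implicitly assuming uniformity of the hypothesis in the center parameter. This is harmless in every application (the moment bounds in \S\ref{sec: 1st moment}--\S\ref{sec: 2nd moment} hold for all $T$), and the paper's own statement is simply a bit loose on this point; but you should say so explicitly rather than bury it in a parenthetical.

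Second, for the exponentially small tails you invoke the Weyl-law bound $\sumh_{|t_f|\leqslant T}1\Lt T^{N}$, but that controls the \emph{count} of forms, not $\sumh a_f$. What you actually need is a polynomial bound on $\sumh_{t_f\in J}a_f$ over each far shell $J$, and this comes directly from the (uniform) hypothesis itself: cover $J$ by $k_{T'',\,M''}$ with suitable $T''$ and $M''\leqslant T^{1-\vepsilon}$, exactly as you did in the transition zone. The exponential decay of $\varww-\chi$ then annihilates the resulting polynomial factors. The Kim--Sarnak reference is only relevant to ensure that any $t_f\in i\big(0,\tfrac{7}{64}\big]$ also give exponentially small $k(t_f)$ and $\varww(t_f)$, which is immediate.
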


Note that we obtain \cite[Lemma 3.1]{BHS-Maass} by applying Lemma \ref{lem: unsmooth, 1}  with  $M = H^{1-\vepsilon}$ and $a_f = \delta_{L (\frac 1 2 , f) \neq 0}$ (the Kronecker $\delta$ that detects $L (\frac 1 2 , f) \neq 0 $). 

\begin{lem}\label{lem: unsmooth, 2}
	For  $ M^{1+\vepsilon} \leqslant 3 H \leqslant T  $ we have
	\begin{align*}
	2 \int_{\, T-H}^{T+H}        \frac {\left| \zeta_F \big(\tfrac 1 2 + it \big) \right|^{2q} } { | \zeta_F  (1 + 2 it  )  |^2 } \nd t  = 	  \int_{-\infty}^{\infty}    \frac {\left| \zeta_F \big(\tfrac 1 2 + it \big) \right|^{2q} } { | \zeta_F  (1 + 2 it  )  |^2 }
		\varww (t) \shskip   \nd t + O_{  \vepsilon} \big( M T^{2q N \theta + \vepsilon} \big), 
	\end{align*}
where $\theta$ is a  sub-convex exponent for $\zeta_F (s)$ as in {\rm\eqref{2eq: subconvex}}. 
\end{lem}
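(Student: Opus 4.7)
The plan is to exploit the fact that $\varww(t)$ is an even, quantitatively good approximation to $\chi(t)$, and to compare the smooth and unsmooth integrals by splitting the real line into three ranges according to the distance from the endpoints $\pm(T\pm H)$.

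First I would rewrite both sides as integrals over all of $\BR$. Since $\varww(t)$ is even (because $k_{K,M}(t)$ is even in $t$) and the integrand
\begin{align*}
F(t) := \frac{|\zeta_F(\tfrac12+it)|^{2q}}{|\zeta_F(1+2it)|^2}
\end{align*}
is also even, the left-hand side of the claimed identity equals $\int_{-\infty}^{\infty} F(t)\chi(t)\,\nd t$, where $\chi(t)$ is the characteristic function of $\{t : ||t|-T|\leqslant H\}$. Hence the error to be estimated is
\begin{align*}
\int_{-\infty}^{\infty} F(t)\bigl(\varww(t)-\chi(t)\bigr)\,\nd t.
\end{align*}

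Next I would invoke the three-way approximation property of $\varww-\chi$ recalled just before Lemma \ref{lem: unsmooth, 1} and combine it with the pointwise bound $F(t)\Lt (1+|t|)^{2qN\theta+\vepsilon}$. This last bound follows from the $2q$-th power of the sub-convex estimate \eqref{2eq: subconvex} together with the lower bound $|\zeta_F(1+2it)| \Gt 1/\log(3+|t|)$ implicit in \eqref{3eq: bound for omega(t)}. The ``interior'' range $||t|-T|\leqslant H-M^{1+\vepsilon}$ and the ``exterior'' range $||t|-T|\geqslant H+M^{1+\vepsilon}$ both contribute negligibly: on the interior $\varww-\chi$ is exponentially small, and on the exterior $\varww$ itself is Gaussian-small (its decay outside a window of radius $M^{1+\vepsilon}$ absorbs the polynomial growth of $F$), while $\chi$ vanishes there.

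The main contribution comes from the four transition windows $||t|-T\pm H|\leqslant M^{1+\vepsilon}$, on each of which $|\varww(t)-\chi(t)|\Lt M^3/(M+s)^3$ with $s=||t|-T\pm H|$. A change of variables $u=s/M$ gives
\begin{align*}
\int_0^{M^{1+\vepsilon}}\frac{M^3}{(M+s)^3}\,\nd s = M\int_0^{M^{\vepsilon}}\frac{\nd u}{(1+u)^3} \Lt M.
\end{align*}
Multiplying by the pointwise bound $F(t)\Lt T^{2qN\theta+\vepsilon}$, which is valid since $|t|\asymp T$ throughout these windows, and summing over the four of them yields the claimed error $O_{\vepsilon}(MT^{2qN\theta+\vepsilon})$. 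The only non-routine point is bookkeeping the exterior-range contribution, but the super-exponential decay of $\varww$ there ensures it is absorbed into the error with ample room.
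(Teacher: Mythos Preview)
Your proposal is correct and follows essentially the same approach as the paper: the paper does not spell out a detailed proof but simply notes that the three-range approximation of $\varww-\chi$ together with the sub-convex bound \eqref{2eq: subconvex} and the bound \eqref{3eq: bound for omega(t)} yield the lemma, which is exactly what you carry out. The split into interior, transition, and exterior ranges and the estimate $\int_0^{M^{1+\vepsilon}} M^3/(M+s)^3\,\nd s \Lt M$ on each transition window are the intended argument.
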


 { \large \part{Analysis of Integrals} \label{part: analysis}}

In the subsequent sections, we shall analyze the Bessel integrals, their Hankel and   Mellin integral transforms over  $F_{\infty} = \BR$ or $\BC$.    Henceforth, $x$, $y$ will always stand for real variables, while $z$, $u$ for complex variables.

\vskip 5pt

%\section{Analysis of   Plancherel Integrals}

%We start with the Plancherel integral $\SDH$ defined in \eqref{1eq: defn Bessel integral}.  

\section{Asymptotics for Bessel Kernels}

Let $ B_{s}(x) $  and $B_{s} (z)$  be the real and complex Bessel kernels  as in Definition \ref{defn: Bessel kernel}, respectively.

By the works in \cite{Qi-Liu-LLZ,Qi-GL(3)}, the   Bessel integrals $\SDH (x)$ and $\SDH (z)$ in the Kuznetsov trace formula are well understood, and  their results will be recollected in the next section.  In this section, we are mainly concerned with the Bessel kernel   $B_0 (x)$ or  $B_0 (z)$ for the Hankel transform arising in the Vorono\"i summation formula.  

In view of Definition \ref{defn: Bessel kernel}, the connection formulae in \cite[3.61 (1), (2)]{Watson} may be applied to deduce  
\begin{align}\label{4eq: formula B0, R, 2}
	B_0 (x) =  \pi i \big( H_0^{(1)} (4\pi \sqrt{x}) - H_0^{(2)} (4\pi \sqrt{x}) \big), \quad B_0 (-x) = 4 K_0 (4\pi \sqrt{x}), 
\end{align}
and
\begin{align}\label{4eq: formula B0, C, 2}
	B_0 (z) =  \pi^2 i \big( { \textstyle  H_{0}^{(1)} (4 \pi \sqrt {z}) H_{0}^{(1)} (4 \pi \sqrt { \widebar z}) - H_{0}^{(2)} (4 \pi \sqrt {z}) H_{0}^{(2)} (4 \pi \sqrt { \widebar z}) } \big) .
\end{align} 
By the asymptotic expansions in \cite[7.2 (1, 2), 7.23 (1)]{Watson}, for any non-negative integer $K$, there are smooth functions $ W_0 (x) $ and $W_0 (z)$ (depending on $K$) with 
\begin{align}\label{5eq: bounds for W0}
	x^j \frac {\nd^j W_0 (x)} {\nd x^j} \Lt_{j, K} 1, \qquad   z^j \widebar{z}^k \frac {\partial^{j+k} W_0 (z)} {\partial z^j \partial \widebar z^k } \Lt_{j, k, K} 1 ,  
\end{align}
 such that 
\begin{align} 
	\label{4eq: asymptotic, R+}		& B_0 (x) =   \sum_{ \pm} \frac {e (\pm (2 \sqrt{x} + 1/8))} {   \sqrt[4]{x \phantom{|\hskip -2 pt}} } W_0 (\pm \sqrt{  x}) + O_{  K} \bigg(  \frac 1 {x^{(2K+1)/4}} \bigg),  \\
	\label{4eq: asymptotic, R-}		& B_0 (-x) = O  \bigg( \frac {\exp (-4\pi \sqrt{x})} {\sqrt[4]{x \phantom{|\hskip -2 pt}}} \bigg),  
\end{align}
for $x > 1$, and 
\begin{align}\label{4eq: asymptotic, C}	
	& B_0 (z) =   \sum_{ \pm} \frac {e (\pm 4 \shskip \mathrm{Re} \sqrt{z})} {\sqrt{|z|} } W_0 (\pm \sqrt{ z}) + O_{ K} \bigg(  \frac 1 {|z|^{(K+1)/2}} \bigg),   
\end{align}
for $|z| > 1$. 

\begin{rem}
	For the real case, \eqref{4eq: asymptotic, R+} has a cleaner form without the error term. For the complex case, however, the error term must be included in {\rm\eqref{4eq: asymptotic, C}}, for the two product functions in {\rm\eqref{4eq: formula B0, C, 2}} are {\it not} individually well defined on $\BC \smallsetminus \{0\}$. 
\end{rem}

\section{Properties of Bessel Integrals}

For $1 \Lt T^{\vepsilon} \leqslant M  \leqslant T^{1-\vepsilon} $, let $k (t) = k_{T, \shskip M} (t)$ 
be the weight function as defined in \S \ref{sec: choice of h}.  Define 
\begin{align}\label{5eq: defn h(nu)}
	h^q (t; v) = h^q_{T, \shskip M}  (t; v) =  
	 k_{T, \shskip M}  (t) G (v, t)^q ,
\end{align} 
with $\mathrm{Re}(v) = \vepsilon$ and $|\mathrm{Im}(v)| \leqslant \log T$. Note that $ h^q (t; v) $ lies in the space $\mathscr{H} \big(\frac 1 2 + \vepsilon \big)$ as in Definition \ref{defn: test functions}. Since $q$ and $v$ are inessential to our analysis, we shall simply write  $h (t) = h^q (t; v)$. 
Let $\SDH (x) $ or $\SDH (z)$ be its associated Bessel integral (see \eqref{1eq: defn Bessel integral}) defined by
\begin{align}\label{7eq: defn of H(x)}
	\SDH (x) = \int_{-\infty}^{\infty} h (t) B_{i t} (x) t  \tanh (\pi t ) \nd \shskip t, \quad \SDH (z) = \int_{-\infty}^{\infty} h (t) B_{i t} (z) t^2  \nd \shskip t . 
\end{align}   
The following results for  $\SDH (x) $ and $\SDH (z)$ are essentially  established in \cite{Qi-Liu-LLZ} and \cite[\S 8]{Qi-GL(3)} in   different settings (see also \cite{Iviv-Jutila-Moments,XLi2011,Young-Cubic} for the real case). For the complex case, however, it will be more convenient to work here in the Cartesian coordinates. 

%First, we have crude estimates for $\SDH (x)$ and $\SDH (z)$ as below. 

The estimates above may be derived from shifting the integral contour to  $\Im (t) = \frac 1 2 +   \vepsilon$. See \cite{Young-Cubic} and \cite{Qi-Liu-LLZ}. 

\begin{lem}\label{lem: H(x), |z|>1}
	
	There exists a Schwartz function $ g (r)$ satisfying $g^{(j)} (r) \Lt_{j, \shskip A,  \shskip \vepsilon}  (1 + |r| )^{-A}$ for any $j, A \geqslant 0$, and such that
	
	{\rm(1)} if $F_{\infty} $ is real,  then $ \SDH (x) = \SDH_{  +}   (x) + \SDH_{  -}  (x) + O   (T^{-A} ) $ for $|x| >  1$, with 
	\begin{equation}\label{8eq: H+natural}
		\SDH_{  \pm}   (  x^2) =   MT^{1+\vepsilon}  
		\int_{- M^{\vepsilon} / M}^{M^{\vepsilon}/ M}   g (    {   M r} )  e( Tr / \pi \mp 2 x \cosh r  ) \nd r,
	\end{equation}
	and 
	\begin{equation}\label{8eq: H-natural}
		\SDH_{  \pm}   (- x^2) =   MT^{1+\vepsilon}  
		\int_{- M^{\vepsilon} / M}^{M^{\vepsilon}/ M}   g (    {   M r} )  e( Tr / \pi \pm 2 x \sinh r  ) \nd r,
	\end{equation}
	for $x > 1${\rm;}
	
	{\rm(2)} if $F_{\infty} $ is complex, then $ \SDH (z) = \SDH_{  +}   (z) + \SDH_{  -}  (z) + O   (T^{-A} ) $ for $|z| > 1$, with 
	\begin{align}\label{8eq: H-sharp(z)}
		\SDH_{\ssstyle \pm }    (  z^2 ) =  M T^{2+\vepsilon} \int_0^{ \pi}   \hskip -1 pt
		\int_{- M^{\vepsilon} / M}^{M^{\vepsilon}/ M}   g  ( M r )  
		e  (2 T r/ \pi \mp 4 \mathrm{Re} (z \trh ( r, \omega ) )     )    \nd r \shskip \nd \omega,
	\end{align} 
	for   $ \arg (z) \in [0, \pi)$, 
	where $\trh  (r, \omega )$ is the ``trigonometric-hyperbolic" function defined by 
	\begin{align}\label{8eq: trh function}
		\trh  (r, \omega ) =    \cosh r \cos \omega + i \sinh r \sin \omega  .
	\end{align} 
	
	Furthermore, 
	
	{\rm (3)} for real $x$ with $1 < |x| \Lt T^2 $, we have $ \SDH (x) = O  (T^{-A})${\rm;}
	
	{\rm (4)} for complex $z$ with $1 < |z| \Lt T^2 $, we have $ \SDH (z) = O  (T^{-A})${\rm;}
	
	{\rm(5)} for real $x$ with $|x| \leqslant 1$, we have 
	\begin{equation}\label{7eq: crude bound for H, R}
		\SDH (x) \Lt_{A , \shskip \vepsilon}   M T^{1 - 2 A } \sqrt{|x|}   ;
	\end{equation}

	{\rm(6)} for complex $z$ with $|z| \leqslant 1$,  we have  
	\begin{equation}\label{7eq: crude bound for H, C}
		\SDH (z) \Lt_{A , \shskip \vepsilon} M T^{2 - 4 A  } |z|   .
	\end{equation}  
\end{lem}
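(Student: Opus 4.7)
The plan is to adapt to the present Cartesian setting the method of \cite{Qi-Liu-LLZ} and \cite[\S 8]{Qi-GL(3)}.

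\emph{Parts (1)--(2).} I would insert into \eqref{7eq: defn of H(x)} the classical integral representations of the Bessel kernels: for $F_{\infty}=\BR$, the $K$-Bessel factor yields a $\cosh r$-phase, and the $J$-Bessel factor, after rotating the Mellin--Barnes contour, yields a $\sinh r$-phase, both paired with $e^{2itr}$; for $F_{\infty}=\BC$, after writing $z=|z|e^{2i\omega}$ with $\omega\in[0,\pi)$, the product $J_{\pm 2it}(4\pi\sqrt{z})\,J_{\pm 2it}(4\pi\sqrt{\bar z})$ can be brought into an $(r,\omega)$-integral whose phase is $\mathrm{Re}(z\,\trh(r,\omega))$. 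Exchanging the order of integration, the inner $t$-integral becomes the Fourier transform of the Gaussian weight $k(t)$, which produces $\sqrt{\pi}Me^{-M^2r^2}\bigl(e(Tr/\pi)+e(-Tr/\pi)\bigr)$ multiplied by the slowly varying polynomial amplitude $t\tanh(\pi t)\,G(v,t)^q$ (or $t^2 G^q$) evaluated near $t=\pm T$. Stirling gives $G(v,t)^q\Lt T^{\vepsilon}$ in the strip, so absorbing these factors and the Gaussian into a Schwartz function $g$ recovers \eqref{8eq: H+natural}, \eqref{8eq: H-natural}, and \eqref{8eq: H-sharp(z)} with the stated amplitudes $MT^{1+\vepsilon}$ and $MT^{2+\vepsilon}$.

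\emph{Parts (3)--(4).} Given the representations, I would integrate by parts in $r$ (and in $(r,\omega)$ in the complex case). The phase $\Phi(r)=Tr/\pi\mp 2x\cosh r$ has derivative $T/\pi\mp 2x\sinh r$; for $1<x^2\Lt T^2$ we have $|x|\Lt T$, and on $|r|\leqslant M^{\vepsilon}/M$ the estimate $|x\sinh r|\Lt TM^{\vepsilon-1}=o(T)$ holds because $M\geqslant T^{\vepsilon}$. Thus $|\Phi'(r)|\asymp T$ uniformly with no stationary point, and each integration by parts against the Schwartz weight $g(Mr)$ gains a factor of order $M/T$; iterating yields $O(T^{-A})$ for any fixed $A$. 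The complex case is handled analogously after verifying $|\partial_r\trh(r,\omega)|\Lt 1$ on the support of $g$.

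\emph{Parts (5)--(6).} For small $|x|$ or $|z|$, I would shift the $t$-contour in \eqref{7eq: defn of H(x)} upward by $iA$. The Gaussian $k(t)$ is entire and the shift incurs only the tame factor $e^{A^2/M^2}$, while $G(v,t)^q$ is holomorphic in a strip controlled by its Gamma poles. Feeding in the small-argument expansion of $B_{it-A}(x)$ (resp.\ $B_{it-A}(z)$) and applying Stirling to the attendant Gamma factors at height $T$ produces the saving $T^{-2A}$ (resp.\ $T^{-4A}$), while the leading factor $\sqrt{|x|}$ (resp.\ $|z|$) is forced by the cancellation between the two $J$-Bessel summands in $B_{it}(x)$ (resp.\ in the product representation of $B_{it}(z)$) as the argument tends to zero.

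The main obstacle is the derivation of the polar representation \eqref{8eq: H-sharp(z)} with the trigonometric--hyperbolic phase $\trh(r,\omega)=\cosh r\cos\omega+i\sinh r\sin\omega$. Fusing the product of two $J$-Bessel factors evaluated at complex-conjugate arguments into a single oscillatory kernel over $(r,\omega)$ requires careful branch and contour analysis, and performing this in Cartesian coordinates rather than the polar coordinates of \cite[\S 8]{Qi-GL(3)} is where the analytic work of the lemma is concentrated.
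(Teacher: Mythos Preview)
Your outline is essentially the approach taken in the references the paper cites (\cite{Qi-Liu-LLZ}, \cite[\S 8]{Qi-GL(3)}, \cite{Young-Cubic}); the paper itself does not give a proof of this lemma beyond those citations, a one-line hint about contour shifting, and the remark following the lemma that the polar residues are exponentially small because $|\mathrm{Im}(v)|\leqslant \log T$.  So at the structural level you are aligned with the paper.

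A few points of detail deserve care.  First, you have the phases swapped: in the real case the $J$-Bessel combination (positive argument) produces the $\cosh r$ phase \eqref{8eq: H+natural}, while the $K$-Bessel kernel (negative argument) produces the $\sinh r$ phase \eqref{8eq: H-natural}.  The relevant integral representation for the latter is of Kontorovich--Lebedev type, $\cosh(\pi t)\,K_{2it}(y)=\tfrac14\int_{\BR}\cos(y\sinh r)\,e^{2itr}\,\nd r$, not the exponential-decay representation of $K_\nu$.  Second, for part~(4) your remark ``verify $|\partial_r\trh|\Lt 1$'' is not by itself enough: since $|z|$ can be of size $T$ and $|\partial_r\trh(r,\omega)|$ can be of order $1$ (through $\cosh r\sin\omega$), the $r$-derivative of the full phase can in principle vanish.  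One needs either a joint non-stationary-phase argument in $(r,\omega)$ or to exploit that the implied constant in $|z|\Lt T^2$ may be taken sufficiently small; the cited references handle this.  Third, for parts (5)--(6) the paper's method is to shift the $t$-contour (not an auxiliary Mellin variable) off the real line, the key observation being that the residues at the poles of $t\tanh(\pi t)$ (and in the complex case the analogous factor) are dominated by $k(t)$ evaluated at purely imaginary $t$, hence of size $e^{-T^2/M^2}$ and negligible; your sketch captures this, though the claim that ``$G(v,t)^q$ is holomorphic in a strip controlled by its Gamma poles'' should be paired with the remark that those very poles are what the exponentially small $k$-factor kills.
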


\begin{rem}
	In {\rm\cite{Qi-GL(3)}},  for the proof in the case $|x| \leqslant 1$ or $|z| \leqslant 1$ a certain polynomial  is introduced to annihilate the poles of the gamma factor, but it is redundant because  the residues of the integrand in {\rm\eqref{7eq: defn of H(x)}} at these poles are actually exponentially small  in view of $|\mathrm{Im}(v)| \leqslant \log T$. 
\end{rem}

In the real case,  Lemma \ref{lem: H(x), |z|>1} (3)  may be strengthened for $x > 1$ as follows. 

\begin{lem}\label{lem: x > MT}
	We have $ \SDH_{  \pm}  (x) = O \big(T^{-A}\big) $ for $1 < x \leqslant M^{2-\vepsilon} T^2    $. 
\end{lem}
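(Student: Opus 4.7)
My plan is to apply the oscillatory integral representation \eqref{8eq: H+natural} from Lemma~\ref{lem: H(x), |z|>1} and to use repeated integration by parts, exploiting the fact that the effective support of $g(Mr)$ has length $O(M^{\vepsilon'-1})$ for arbitrarily small $\vepsilon'>0$. This shrinking support pushes the first stationary point of the phase out to much larger values of the argument than in Lemma~\ref{lem: H(x), |z|>1}(3), which is precisely what enlarges the admissible range from $x\Lt T^{2}$ up to $x\leqslant M^{2-\vepsilon}T^{2}$.

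Substituting $x\mapsto\sqrt{x}$ in \eqref{8eq: H+natural}, the object to bound is
\[
\SDH_{\pm}(x) \;=\; MT^{1+\vepsilon'} \int_{-M^{\vepsilon'-1}}^{M^{\vepsilon'-1}} g(Mr)\, e\!\left(\tfrac{Tr}{\pi} \mp 2\sqrt{x}\,\cosh r\right) dr,
\]
whose phase $\phi(r)=Tr/\pi\mp 2\sqrt{x}\cosh r$ has derivative $\phi'(r)=T/\pi\mp 2\sqrt{x}\sinh r$. On the effective support $|r|\leqslant M^{\vepsilon'-1}$ one has $|\sinh r|\Lt M^{\vepsilon'-1}$, and the hypothesis $x\leqslant M^{2-\vepsilon}T^{2}$ gives $\sqrt{x}\leqslant M^{1-\vepsilon/2}T$. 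Choosing $\vepsilon'<\vepsilon/4$ yields $|2\sqrt{x}\sinh r|\Lt M^{\vepsilon'-\vepsilon/2}T\leqslant T/(4\pi)$ for $T$ large, so $|\phi'(r)|\geqslant T/(2\pi)$ throughout the interval of integration.

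Integration by parts $K$ times then delivers the claim. For $k\geqslant 2$ one has $|\phi^{(k)}(r)|\Lt\sqrt{x}\leqslant M^{1-\vepsilon/2}T$ on the effective support, while $(g(Mr))^{(j)}=O(M^{j})$; the standard van der Corput-type estimate then yields
\[
\SDH_{\pm}(x)\;\Lt\; T^{1+O(\vepsilon')} (M/T)^{K},
\]
which is $O(T^{-A})$ once $K$ is chosen large enough, since $M\leqslant T^{1-\vepsilon}$. The only mild subtlety is the interaction between the two small parameters $\vepsilon$ and $\vepsilon'$: \eqref{8eq: H+natural} must be invoked with $\vepsilon'<\vepsilon/4$, which is permissible by the $\vepsilon$-convention adopted at the end of the introduction.
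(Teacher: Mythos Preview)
Your argument is correct and is precisely the natural proof. The paper states Lemma~\ref{lem: x > MT} without proof, merely noting that it strengthens Lemma~\ref{lem: H(x), |z|>1}\,(3) in the range $x>1$; your integration-by-parts argument on the representation \eqref{8eq: H+natural} is exactly how one fills in the details.

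Two minor remarks. First, the integral in \eqref{8eq: H+natural} has hard endpoints at $r=\pm M^{\vepsilon'}/M$, so integration by parts produces boundary terms; these are harmless because $g^{(j)}(\pm M^{\vepsilon'})\Lt_{A'} M^{-\vepsilon' A'}$ by the Schwartz bounds on $g$ stated in Lemma~\ref{lem: H(x), |z|>1}. Equivalently, one may first extend the $r$-integral to all of $\BR$ at negligible cost and then integrate by parts without boundary contributions. Second, your bookkeeping of the terms arising from $K$-fold integration by parts is right: each term has the shape $w^{(j_0)}\phi^{(k_1)}\cdots\phi^{(k_m)}/(\phi')^{K+m}$ with $j_0+\sum(k_i-1)=K$ and $k_i\geqslant 2$, hence $j_0+m\leqslant K$, and using $|\phi^{(k)}|\Lt\sqrt{x}\leqslant M^{1-\vepsilon/2}T$, $|w^{(j_0)}|\Lt M^{j_0}$, $|\phi'|\Gt T$ one arrives at $(M/T)^K$ as you wrote.
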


%For the convenience of applying partial integration to the $r$-integral, one may modify $g (r) $ by a suitable partition of unity so that it is supported in $|r| \leqslant M^{\vepsilon}$. 

\section{Analysis of   Hankel Transforms}\label{sec: Hankel}

Let $\varww  (x) \in C_c^{\infty} [1, 2] $  satisfy $ \varww^{(j)} (x)  \Lt_{j} (\log T)^{j} $ for all $j \geqslant 0$. For  $|\varLambda| \Gt T^2$, define 
\begin{align}\label{11eq: defn of w (x, Lmabda), R}
	\varww (x, \varLambda ) = \varww (|x|) \SDH  ( \varLambda x  ) ,
\end{align}
if $F_{\infty}$ is real, and
\begin{align}\label{11eq: defn of w (z, Lmabda), C}
	\varww (z, \varLambda ) = \varww (|z|) \SDH  ( \varLambda  z ) ,
\end{align}
if $F_{\infty}$ is complex. Let $\widetilde {\varww}_0 (y , \varLambda )$ and $\widetilde {\varww}_0 ( u , \varLambda )$ be their Hankel transform  defined by
\begin{align}\label{9eq: Hankel}
	& \widetilde {\varww}_0 (y , \varLambda ) = \int {\varww} (x , \varLambda ) B_0 (xy) \nd x, \quad \widetilde {\varww}_0 (u , \varLambda ) = \viint {\varww} (z , \varLambda ) B_0 (z u) \nd z . 
\end{align}
First of all,  let us assume $\varLambda > 0$ with no loss of generality, as
\begin{align}\label{w(y, -L) = w (-y, L)}
\widetilde {\varww}_{0} (y ,  \varLambda )= \widetilde {\varww}_{0} ( \epsilon y , \epsilon \varLambda ), \qquad 	\widetilde {\varww}_{0} (u ,   \varLambda )= \widetilde {\varww}_{0} (  {\epsilon} u , \epsilon \varLambda ) , 
\end{align}  
for any   $\epsilon \in \Fx_{\infty}$ with $ |\epsilon| = 1 $.

\begin{lem}\label{lem: Hankel}
	Suppose that     $ \varLambda  \Gt T^2$. 
	
{\rm(1)} When $F_{\infty}$ is real, for  $ y \geqslant T^{\vepsilon}$ we have
	\begin{align}\label{10eq: tilde w = Phi, R}
		\widetilde {\varww}_0 ( \pm y , \varLambda) =   \frac{MT^{1+\vepsilon} } { \sqrt[4]{y \phantom{|\hskip -2 pt}} }  
		    \Psi^{\pm} \big( \sqrt{y /  \varLambda } , \sqrt{  \varLambda }\big)  
		+ O \big(T^{-A} \big) ,
	\end{align} 
 	with
	\begin{align}\label{10eq: Phi+ (x), R}
		\Psi^{+} (x, \varDelta) =   \int_{- M^{\vepsilon} / M}^{M^{\vepsilon}/ M} e( Tr / \pi )  g (    {   M r} )  \widehat{V}  ( \varDelta  (x - \cosh r )   )  \nd r ,
	\end{align}  
or $\Psi^{+} (x, \varDelta) = 0$ according as   $\varDelta  >  M^{1-\vepsilon} T$ or not, and 
\begin{align}\label{10eq: Phi- (x), R}
	\Psi^{-} (x, \varDelta) \hskip -1pt = \hskip -2pt  \int_{- M^{\vepsilon} / M}^{M^{\vepsilon}/ M} \hskip -1pt e( Tr / \pi ) g (    {   M r} )  \big( \widehat{V}  ( \varDelta  ( x \hskip -1pt + \hskip -1pt \sinh r   )   ) \hskip -1pt + \hskip -1pt \widehat{V}  ( \varDelta  ( x \hskip -1pt - \hskip -1pt  \sinh r   )   )   \big)  \nd r ,
\end{align}  
where $ \widehat{V} (x) $ is a  Schwartz function  satisfying  
\begin{align}\label{8eq: Schwartz, R}
	\frac{\nd^{j} \widehat{V}  (x) } {\nd x^{j}}   \Lt_{j, A} \lp 1 + \frac   {|x|}   {\log T}  \rp^{- A}   
\end{align}
 for any $j, A \geqslant 0$. 
 
 {\rm(2)} When $F_{\infty}$ is complex, for  $ |u| \geqslant T^{\vepsilon}$ we have
 \begin{align}\label{10eq: tilde w = Phi, C}
 	\widetilde {\varww}_0 ( u , \varLambda) =   \frac{MT^{2+\vepsilon} } {\sqrt{|u|} } 
 	\Psi  \big( \sqrt{ u /  \varLambda } , \sqrt{  \varLambda }  \big)  \nd \omega
 	+ O \big(T^{-A} \big) ,
 \end{align}  	with
 \begin{align}\label{10eq: Phi (x), C}
 	\Psi  (z, \varDelta) =  \int_0^{2 \pi}  \int_{- M^{\vepsilon} / M}^{M^{\vepsilon}/ M} e( 2 Tr / \pi )  g (    {   M r} )  \widehat{V}  ( \varDelta  ( z -  \trh (r, \omega)    )   )  \nd r \nd \omega ,
 \end{align}    
 where $ \widehat{V} (z) $ is a Schwartz function  satisfying  
 \begin{align}\label{8eq: Schwartz, C}
 	\frac{\partial^{j + k} \widehat{V}  (z) } {\partial z^{j} \partial \widebar{z}^{k} }  \Lt_{j, k, A} \lp 1 + \frac   {|z|}   {\log T }  \rp^{- A}  
 \end{align}
 for any $j, k,  A \geqslant 0$. 
\end{lem}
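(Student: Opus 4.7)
The strategy is two-fold: first, substitute the oscillatory representations of $\SDH$ from Lemma~\ref{lem: H(x), |z|>1}(1)(2) into the defining Hankel integrals \eqref{9eq: Hankel}, and substitute the asymptotic expansions \eqref{4eq: asymptotic, R+}--\eqref{4eq: asymptotic, C} for the Bessel kernel $B_0$; second, exchange the orders of integration so that the $r$-integral (and $\omega$-integral, in the complex case) sits on the outside, and recognize each inner integral as the Fourier transform of a smooth, compactly supported amplitude---this Fourier transform will play the role of the Schwartz function $\widehat{V}$. The hypothesis $\varLambda\Gt T^{2}$, combined with $|x|$ (or $|z|$) $\asymp 1$, ensures that Lemma~\ref{lem: H(x), |z|>1}(1)(2) applies on the whole support of $\varww$; the contributions of the $O(T^{-A})$ error in that lemma, and of the regimes $|xy|\leqslant 1$ (or $|zu|\leqslant 1$) where the expansions of $B_{0}$ are not available, are absorbed into the negligible remainder via \eqref{7eq: crude bound for H, R}--\eqref{7eq: crude bound for H, C} and the trivial bound for $\varww$.

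In the real case for $\widetilde{\varww}_{0}(+y,\varLambda)$ with $y\geqslant T^{\vepsilon}$, only $x>0$ contributes, since $B_{0}(-\,\cdot\,)$ is exponentially small by \eqref{4eq: asymptotic, R-}. Substituting \eqref{8eq: H+natural} and the oscillatory part of \eqref{4eq: asymptotic, R+} and changing variables $u=\sqrt{x}$, the inner $u$-integral has linear phase $2u(\pm' \sqrt{y}\mp\sqrt{\varLambda}\cosh r)$ in each of the four sign combinations; only the two matching combinations can make the phase small, and they combine to produce $\widehat{V}\bigl(\sqrt{\varLambda}(\sqrt{y/\varLambda}-\cosh r)\bigr)$, matching \eqref{10eq: Phi+ (x), R} once one factors out $y^{-1/4}$. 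The non-matching combinations yield phases bounded below by $\sqrt{y}+\sqrt{\varLambda}$ and are negligible after repeated integration by parts. For $\widetilde{\varww}_{0}(-y,\varLambda)$ the analysis is symmetric, but uses \eqref{8eq: H-natural}, whose phase involves $\sinh r$; here \emph{both} signs of $\sinh r$ can induce stationarity, producing the two $\widehat{V}$ terms in \eqref{10eq: Phi- (x), R}. The vanishing of $\Psi^{+}$ when $\varDelta\leqslant M^{1-\vepsilon}T$ (equivalently $\varLambda\leqslant M^{2-2\vepsilon}T^{2}$) follows directly from Lemma~\ref{lem: x > MT}, which forces $\SDH_{\pm}(\varLambda x)$ itself to be negligible in this range.

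The complex case runs in parallel but with more bookkeeping. Substituting $z=\zeta^{2}$ (with Jacobian $4|\zeta|^{2}\nd\zeta$ in the 2D Lebesgue measure) pairs nicely with the factor $|z|^{-1/2}=|\zeta|^{-1}$ from \eqref{4eq: asymptotic, C}; the resulting total phase is $4\,\mathrm{Re}\bigl(\zeta(\pm' \sqrt{u}\mp\sqrt{\varLambda}\trh(r,\omega))\bigr)$, so the inner integral is a two-dimensional Fourier transform and yields $\widehat{V}\bigl(\sqrt{\varLambda}(\sqrt{u/\varLambda}-\trh(r,\omega))\bigr)$ as in \eqref{10eq: Phi (x), C}; the non-matching sign pairs are again negligible because $|\trh(r,\omega)|$ stays bounded on the support of $g(Mr)$. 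The Schwartz estimates \eqref{8eq: Schwartz, R}, \eqref{8eq: Schwartz, C} are then obtained by repeated integration by parts in these Fourier integrals, each step costing at most one $\log T$ factor through $\varww^{(j)}\Lt_{j}(\log T)^{j}$, which accounts for the $\log T$ in the denominator of the decay. The main difficulty I anticipate is the careful combinatorial tracking of the many sign combinations and the uniform verification that the non-matching ones are indeed negligible in all regimes; this is slightly more delicate in the complex setting, where the trigonometric--hyperbolic function $\trh(r,\omega)$ plays the unified role that $\cosh r$ and $\sinh r$ play separately in the real case.
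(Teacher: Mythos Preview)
Your proposal is correct and follows essentially the same route as the paper: insert the oscillatory formulae \eqref{8eq: H+natural}--\eqref{8eq: H-sharp(z)} for $\SDH$ together with the Bessel asymptotics \eqref{4eq: asymptotic, R+}--\eqref{4eq: asymptotic, C}, change variable to $\sqrt{x}$ (the paper's $\pm 2\sqrt{x}\ra x$), swap integrals, and read off the inner integral as a Fourier transform of a compactly supported amplitude whose derivative bounds $(\log T)^j$ yield \eqref{8eq: Schwartz, R}--\eqref{8eq: Schwartz, C}. Your handling of the sign combinations, the use of Lemma~\ref{lem: x > MT} for the $\Psi^{+}=0$ clause, and the parallel treatment of the complex case via $\trh(r,\omega)$ all match the paper's argument.
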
 
 
\begin{proof}

First, let $F_{\infty}$ be real. %	We start with the case   $ y > T^{\vepsilon} $, so that  $B_0 (\pm xy)$ is either oscillatory or exponentially small. 
By \eqref{4eq: asymptotic, R+} (with $K$ large in terms of $\vepsilon$ and $A$), \eqref{4eq: asymptotic, R-}, and \eqref{8eq: H+natural}, \eqref{8eq: H-natural} in Lemma   \ref{lem: H(x), |z|>1} (1), along with the substitution $\pm  2 \sqrt{ x} \ra x$,  it follows that,  up to a negligible error,   $\widetilde {\varww}_0 (    y , \varLambda ) $ or $\widetilde {\varww}_0 (  -  y , \varLambda ) $ becomes the sum of 
	\begin{align*}
		\frac{MT^{1+\vepsilon} } { \sqrt[4]{y \phantom{|\hskip -2 pt}} }  
		\int_{- M^{\vepsilon} / M}^{M^{\vepsilon}/ M}  e( Tr / \pi ) g (    {   M r} )  \lp \int_{-\infty}^{\infty} V (x) e \big( \hskip -1pt -   x \big( \sqrt{y} \pm \sqrt{\varLambda} \cosh r   \big)  \big) \nd x \rp \nd r ,  
	\end{align*}
or
\begin{align*}
	\frac{MT^{1+\vepsilon} } { \sqrt[4]{y \phantom{|\hskip -2 pt}} }  
	\int_{- M^{\vepsilon} / M}^{M^{\vepsilon}/ M}  e( Tr / \pi ) g (    {   M r} )  \lp \int_{-\infty}^{\infty} V (x) e \big( \hskip -1pt -   x \big( \sqrt{y} \mp \sqrt{\varLambda} \sinh r   \big)  \big) \nd x \rp \nd r ,  
\end{align*}
respectively, where $V (x)$ is a certain smooth weight function  supported in $|x| \in [1/2, 1/ \sqrt{2}]$ with $$   V^{(j)} (x)  \Lt_{j, A} (\log T)^{j} .  $$ (To be explicit, $ V (\pm 2 x  ) =  (1 \mp i) \sqrt{x/2} \shskip \varww (x^2)   W_0 (\mp \sqrt{y} x)$.) By Lemma \ref{lem: x > MT}, the first integral is negligibly small unless $ \sqrt{\varLambda} > M^{1-\vepsilon} T $. Observe that the inner integral is a Fourier integral, and that $ \sqrt{y} + \sqrt{\varLambda} \cosh r \Gt T $ is large, so the results follow immediately.

Second, let $F_{\infty}$ be complex. Similar to the real case, one may prove \eqref{10eq: tilde w = Phi, C}   on applying \eqref{4eq: asymptotic, C} and \eqref{8eq: H-sharp(z)}, along with the substitution $\pm 2 \sqrt{z} \ra z$.  
\end{proof}

\subsection{Analysis for the Hyperbolic  Functions}

\begin{lem}\label{lem: I-}
   Let  $  \delta < \rho  \Lt 1$. For $0 \leqslant x < 1$  define the region  $\RI^- (\delta, \rho; x)$ by 
   \begin{align}\label{8eq: defn I-}
   	|r| \leqslant  \rho ,   \qquad   |\sinh r \pm  x   | \leqslant   \delta. %\text{\footnotemark}
   \end{align}  

{\rm(1)} $\RI^- (\delta, \rho; x)$ is non-empty unless $   {x} \Lt \rho  $.  

{\rm(2)} $\RI^- (\delta, \rho; x)$ has length $O (\delta)$.   
\end{lem}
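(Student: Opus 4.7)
The plan is to exploit the elementary monotonicity and growth properties of $\sinh r$ on the bounded interval $|r| \leqslant \rho$, where $\rho \Lt 1$ is bounded.

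For part (1), I would start from the observation that for $|r| \leqslant \rho \Lt 1$ one has $|\sinh r| \leqslant \sinh \rho \leqslant C \rho$ for an absolute constant $C$ (since $\sinh$ is Lipschitz on bounded intervals, or from the power series $\sinh r = r + r^3/6 + \dots$). If $(r, x)$ satisfies the defining inequalities of $\RI^-(\delta, \rho; x)$, then the triangle inequality gives
\begin{equation*}
x \leqslant |\sinh r| + |\sinh r \pm x| \leqslant C \rho + \delta \Lt \rho,
\end{equation*}
since $\delta < \rho$. Thus non-emptiness forces $x \Lt \rho$, which is the contrapositive of the stated claim.

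For part (2), the key point is that $\sinh$ is a strictly increasing diffeomorphism on $\BR$ with derivative $\cosh r \geqslant 1$, so its inverse $\sinh^{-1}$ is Lipschitz with constant $1$. The condition $|\sinh r \pm x| \leqslant \delta$ restricts $r$ to $\sinh^{-1}([\mp x - \delta, \mp x + \delta])$, a single interval of length at most $2 \delta$. Intersecting with $|r| \leqslant \rho$ can only shrink this further, so the measure of $\RI^-(\delta, \rho; x)$ is $O(\delta)$.

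There is no real obstacle here; both claims reduce to the two basic facts that on a bounded interval $\sinh r$ is comparable to $r$ and has derivative bounded below by $1$. The only point to keep in mind is the implicit assumption $\rho \Lt 1$, so that $\sinh \rho \asymp \rho$ and the Lipschitz constants are absolute; this is already built into the hypothesis $\rho \Lt 1$ stated in the lemma.
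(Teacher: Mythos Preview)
Your proof is correct and follows essentially the same approach as the paper: for (1) the paper simply notes $\sinh r = O(\rho)$, and for (2) it invokes the mean value theorem to obtain $|r \pm \mathrm{arcsinh}\, x| \Lt \delta$, which is exactly your Lipschitz bound for $\sinh^{-1}$ coming from $\cosh r \geqslant 1$.
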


\begin{proof} 
The first assertion is obvious in view of $\sinh r = O (\rho)$. By the mean value theorem, the second inequality in \eqref{8eq: defn I-} implies that  $  |r \pm \mathrm{arcsinh} \hskip 1pt   {x}  | \Lt \delta $, and hence the length of  $\RI^- (\delta, \rho; x)$ is bounded by $ O (\delta) $. 
\end{proof}

\begin{lem}\label{lem: I+} 
	
	Let  $  \sqrt{\delta} < \rho  \Lt 1$.	For  $0 < x < \sqrt{2}$  define the region  $\RI^+ (\delta, \rho; x)$  by
	\begin{align}\label{8eq: defn I+}
		|r| \leqslant  \rho , \qquad  |\cosh r - x  | \leqslant  \delta   .
	\end{align}   
%Let $x_{\oldstylenums{0}} = \sqrt{x^2-1}$.	

{\rm(1)} $\RI^+ (\delta, \rho; x)$ is non-empty unless $  \left| x -1 \right| \Lt \rho^2   $. 

{\rm(2)}  $\RI^+ (\delta, \rho; x)$ has length  $ O  (\delta  /  \sqrt{|x -1|} ) $.   

{\rm(3)} We have  $
		 \sinh r  \allowbreak \Lt \sqrt{\delta} $ on the region  $\RI^+ (\delta, \rho; 1)$. 
\end{lem}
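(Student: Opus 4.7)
The plan is to exploit the convexity and evenness of $\cosh$ near zero: on $[-\rho, \rho]$ with $\rho \Lt 1$, we have $\cosh r - 1 = 2\sinh^2(r/2) \asymp r^2$, so $\cosh r$ takes values in $[1, 1 + O(\rho^2)]$. For (1), if there exists $r \in [-\rho, \rho]$ with $|\cosh r - x| \leqslant \delta$, then $x \in [1 - \delta, 1 + O(\rho^2)]$, and since $\delta < \rho^2$ by hypothesis, $|x-1| \Lt \rho^2$.

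For (2), I would split into cases by the sign of $x - 1$. If $x \leqslant 1$, non-emptiness forces $1 - x \leqslant \delta$, and combining $|\cosh r - x| \leqslant \delta$ with $\cosh r \geqslant 1 \geqslant x$ gives $\cosh r - 1 \leqslant \delta - (1-x) \leqslant \delta$, so $r^2 \Lt \delta$. The resulting bound $|r| \Lt \sqrt{\delta}$ is majorized by $O(\delta/\sqrt{|x-1|})$ since $|x - 1| \leqslant \delta$. If $x > 1$, set $r_0 = \mathrm{arccosh}(x) \in (0, \rho]$; by evenness $\RI^+$ consists of two pieces, one near each of $\pm r_0$. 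On the positive piece I would parametrize $\cosh r = x + u$ with $u \in [\max\{1-x,-\delta\}, \delta]$, so that $r = \mathrm{arccosh}(x+u)$. The length is then
\begin{equation*}
\int \frac{\nd u}{\sqrt{(x+u)^2 - 1}} = \int \frac{\nd v}{\sqrt{v(v+2)}}
\end{equation*}
after the substitution $v = x + u - 1$. Using $1/\sqrt{v(v+2)} \Lt 1/\sqrt{v}$ (valid as $v \geqslant 0$) and integrating from $v = \max\{0, x-1-\delta\}$ to $v = x - 1 + \delta$ yields a contribution of $O(\delta/\sqrt{x-1})$ when $x - 1 \geqslant \delta$, and $O(\sqrt{\delta})$ otherwise, both of which are absorbed by $O(\delta/\sqrt{|x-1|})$.

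For (3), the key identity is $\sinh^2 r = \cosh^2 r - 1 = (\cosh r - 1)(\cosh r + 1)$. On $\RI^+(\delta, \rho; 1)$ we have $0 \leqslant \cosh r - 1 \leqslant \delta$ and $\cosh r + 1 \leqslant 2\cosh\rho = O(1)$, so $\sinh^2 r \Lt \delta$, giving $|\sinh r| \Lt \sqrt{\delta}$.

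No serious obstacle is anticipated; the only nontrivial step is the integral estimate in (2), which is elementary once one recognizes that the inverse function $\mathrm{arccosh}(1 + v)$ behaves like $\sqrt{2v}$ near $v = 0$. The various cases of $|x-1|$ relative to $\delta$ glue together cleanly because $\delta/\sqrt{|x-1|} \geqslant \sqrt{\delta}$ whenever $|x-1| \leqslant \delta$.
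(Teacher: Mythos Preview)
Your proof is correct. The argument differs from the paper's in a minor but instructive way. The paper's key move is to pass from $\cosh$ to $\sinh$ via $\sinh^2 r = \cosh^2 r - 1$, obtaining $|\sinh^2 r - (x^2-1)| \Lt \delta$; parts (1) and (3) are then immediate, and for (2) the mean value theorem applied to $\mathrm{arcsinh}$ gives $|r \pm \mathrm{arcsinh}\sqrt{x^2-1}| \Lt \delta/\sqrt{x-1}$ in the case $x - 1 \Gt \delta$, without needing to compute an integral. Your approach stays with $\cosh$ and $\mathrm{arccosh}$, which forces you to handle the branch-point singularity of $\mathrm{arccosh}$ at $1$ explicitly via the integral $\int \nd v/\sqrt{v(v+2)}$. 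Both routes are elementary; the paper's is slightly more uniform (one identity governs all three parts), while yours is more direct and avoids introducing $\sinh$ as an auxiliary variable. One small remark: your claim that $r_0 = \mathrm{arccosh}(x) \in (0,\rho]$ need not hold literally when $x$ is just above $\cosh\rho$, but since you only use $r_0$ for orientation and the integral bound is an upper bound regardless, this does not affect the argument.
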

 
\begin{proof} 
	 By $   \sinh^2 r = \cosh^2 r - 1$, the second inequality in \eqref{8eq: defn I+} implies
	\begin{align}\label{8eq: cosh, 2}
		\big|\sinh^2 r -  (x^2 - 1 ) \big| \Lt   \delta .  
	\end{align}
Then (1) and (3) are obvious. As for (2), 
	\eqref{8eq: cosh, 2} yields    $ 	|  r | \Lt \sqrt{\delta}$ if   $  \left| x -1 \right| \Lt  {\delta} $, the empty set if $ 1-x \Gt  {\delta} $, and   $ \big|r \pm \mathrm{arcsinh}   \sqrt{x^2-1} \big| \Lt \delta  /   \sqrt{x-1}$  if $    x -1   \Gt  {\delta} $  (again, by the mean value theorem), and  hence the length of  $\RI^+ (\delta, \rho; x)$ is bounded by $ O  (\delta  /   \sqrt{| x -1  |} ) $ in every case.  
\end{proof}

\subsection{Analysis for the   Trigonometric-Hyperbolic Function}

\begin{lem}\label{lem: I, C}
	
	Let  $   {\delta} < \rho  \Lt 1$.	For  $ |x| < \sqrt{2}$ and $|y| < 1$  define   $\RI  (\delta, \rho; x + i y)$ to be the set of $(r, \omega)$ such that 
	\begin{align}\label{8eq: defn I}
		|r| \leqslant  \rho ,      \qquad  |\cos \omega \cosh r - x  | \leqslant  \delta, \qquad  |\sin \omega \sinh r - y  | \leqslant  \delta  .
	\end{align}  

{\rm(1)}  $\RI  (\delta, \rho; x + i y)$ is non-empty unless $ |x| < 1 + 2 \rho $ and $|y| \Lt \rho$. 

{\rm(2)} The area of  $\RI  (\delta, \rho; x + i y)$ has bound as follows, 
 	 \begin{align}\label{8eq: bound area}
 	 \mathrm{Area} \, \RI  (\delta, \rho; x + i y)	\Lt \frac {\delta^2} {\sqrt{(|x|-1)^2 + y^2}}. 
 	 \end{align}

%{\rm(3)}   We have  $ \sin^2 \omega   \Lt \max \big\{ ||x|-1|, \rho^2 \big\}$ on  $\RI  (\delta, \rho; x+i y)$ if $|x| > 1/2$. 

{\rm(3)} We have $
  \sinh r, \sin \omega   \Lt \sqrt{\delta} $ on the region   $\RI  (\delta, \rho; \pm 1)$.  
\end{lem}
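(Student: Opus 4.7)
\smallskip

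\noindent\textbf{Proof proposal.} The crux of the argument is the holomorphic identification
\[
\trh(r,\omega) = \cosh r \cos\omega + i \sinh r \sin\omega = \cos(\omega - ir),
\]
so that $\Phi \colon (r,\omega) \mapsto \trh(r,\omega)$ is, in the variable $\zeta = \omega - ir$, the holomorphic map $\zeta \mapsto \cos\zeta$. Its Jacobian is therefore
\[
|\Phi'(\zeta)|^2 = |\sin(\omega - ir)|^2 = \sin^2\omega + \sinh^2 r,
\]
and the identity $\sin^2\zeta = 1 - \cos^2\zeta$ evaluated at $\zeta = \omega - ir$ yields the clean formula $|\sin(\omega - ir)|^2 = |1 - z^2|$ with $z = \trh(r,\omega)$. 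This dual description drives the bound in (2).

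For (1), I would argue directly from \eqref{8eq: defn I}: any $(r,\omega) \in \RI$ satisfies $|x| \leq \cosh\rho + \delta$ and $|y| \leq \sinh\rho + \delta$. Using $\cosh\rho - 1 \Lt \rho^2$ and $\sinh\rho \Lt \rho$ together with $\delta < \rho$, this gives $|x| < 1 + 2\rho$ and $|y| \Lt \rho$. For (3), I would square $|\cosh r \cos\omega \mp 1| \leq \delta$ and use the Pythagorean identities to produce
\[
\sinh^2 r - \sin^2\omega - \sin^2\omega\,\sinh^2 r = O(\delta).
\]
Combined with $\sin^2\omega\,\sinh^2 r \leq \delta^2$, this leaves $\sinh^2 r$ and $\sin^2\omega$ with difference $O(\delta)$ and product $O(\delta^2)$, which forces each to be $O(\delta)$ and hence $|\sinh r|, |\sin\omega| \Lt \sqrt\delta$.

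For the core estimate (2), I would view $\RI$ as contained in the preimage $\Phi^{-1}(B)$, where $B = [x-\delta, x+\delta] \times [y-\delta, y+\delta]$, and observe that on the strip $\omega \in [0, 2\pi]$, $|r| \leq \rho$ the map $\cos$ is at most $2$-to-$1$, the two sheets being related by $\zeta \mapsto 2\pi - \zeta$. Change of variables then gives
\[
\mathrm{Area}\,\RI \leq 2\int_{B} \frac{|dw|^2}{|1 - w^2|}.
\]
By part (1), $z_0 := x + iy$ lies within $O(\rho)$ of $\pm 1$; using the symmetry $\omega \to \omega + \pi$, which negates $\Phi$, I would reduce to the case $|z_0 - 1| \Lt \rho$, in which $|1 + w| \asymp 1$ throughout the disk of radius $\sqrt 2\,\delta$ around $z_0$. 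A standard two-case polar estimate (splitting on $|z_0 - 1| \geq 2\sqrt 2\,\delta$ versus $|z_0 - 1| < 2\sqrt 2\,\delta$) then yields
\[
\int_{|w - z_0| \leq \sqrt 2\,\delta} \frac{|dw|^2}{|1 - w|} \Lt \frac{\delta^2}{|z_0 - 1| + \delta} \Lt \frac{\delta^2}{|z_0 - 1|} = \frac{\delta^2}{\sqrt{(|x|-1)^2 + y^2}}.
\]

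The main technical delicacy is the vanishing of the Jacobian at the critical points $\zeta = 0, \pi$ of $\cos$, with image $\pm 1$. The holomorphic factorization $|1 - w^2| = |1 - w||1 + w|$ renders this harmless: the singularity $|1-w|^{-1}$ is locally integrable in $\BC$, and the bound $\delta^2/|z_0 - 1|$ becomes vacuously large precisely when $z_0$ approaches $1$, absorbing the apparent blow-up.
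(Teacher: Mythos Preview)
Your approach via the holomorphic identification $\trh(r,\omega) = \cos(\omega - ir)$ is genuinely different from the paper's and in several respects cleaner. The paper treats $f:(r,\omega) \mapsto (\cos\omega\cosh r, \sin\omega\sinh r)$ as a real map, invokes the Gale--Nikaid\^o univalence theorem to get injectivity on a quadrant, and then establishes the pointwise lower bound $\sinh^2 r + \sin^2\omega \Gt \sqrt{(x-1)^2+y^2}$ on $\RI$ by a case analysis (splitting on whether $|x-1|$ or $|y|$ exceeds $\delta$, and treating $x \leqslant 1/2$ separately); the degenerate case $|x-1|, |y| \Lt \delta$ is handled by yet another identity. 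Your key observation that the Jacobian equals $|1-w^2|$ \emph{as a function of the image} $w = \Phi(\zeta)$ bypasses both the univalence theorem and the case analysis: the $2$-to-$1$ property of $\cos$ on a period strip is elementary, and the area bound collapses to the explicit integral $\int_B |dw|^2/|1-w^2|$, with the critical points absorbed by local integrability.

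There is, however, one genuine slip. You write ``by part (1), $z_0 := x+iy$ lies within $O(\rho)$ of $\pm 1$''. This is false: part (1) only gives $|x| < 1+2\rho$ and $|y| \Lt \rho$, so $z_0$ lies in a thin strip about the real segment $[-1,1]$, not necessarily near an endpoint (e.g.\ $x = 1/2$ is perfectly allowed, and the paper's proof treats this range explicitly). Fortunately your argument does not actually need this. After reducing by the symmetry $\omega \to \omega + \pi$ to $x \geqslant 0$, one has $|z_0 + 1| \geqslant 1$ directly, hence $|1+w| \asymp 1$ throughout $B$ (since $\delta < \rho \Lt 1$), which is all your polar estimate for $\int_B |dw|^2/|1-w|$ requires. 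With this correction, your proof of (2) goes through. Your arguments for (1) and (3) are correct and essentially coincide with the paper's.
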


\begin{proof}
%	We shall focus on (2), since (1) is obvious while (3)  will be transparent in its proof. 
We shall focus on (2), since (1) is obvious while (3)  will be transparent in  the last case of its proof.

	By symmetry, we only need to work in the setting with $ (r, \omega) \in [0, \rho] \times [0,  \pi / 2] $ and $(x, y) \in [0, \sqrt{2}) \times [0, 1)$.  

 Consider the mapping
\begin{align}
	f : (r, \omega) \ra (\cos \omega \cosh r, \sin \omega \sinh r), 
\end{align}
so that   $\RI  (\delta, \rho; x + i y)$ is contained in  the preimage  under $f$ of the square with center $(x, y)$ and  area $4\delta^2$.
The  Jacobian matrix 
\begin{align*}
	J_f (r, \omega) =   \begin{pmatrix}
		\, \cos \omega \sinh r &  \sin \omega \cosh r \\
		- \sin \omega \cosh r & \cos \omega \sinh r
	\end{pmatrix}  . 
\end{align*} 
On the semi-closed rectangle $(0 , \rho] \times (0, \pi/2)$, since all the principal minors of $J_f  (r, \omega)$ are positive, by the Univalence Theorem of Gale and Nikait\^o  (\cite[\S \S 4.2, 4.3]{Gale-Nikaido}), $f$ is a  univalent mapping. Note that the Jacobian determinant is equal to $\sinh^2 r + \sin^2 \omega $. Therefore  $f $ may be used as a coordinate transform, and if we are able to prove  the lower bound 
\begin{align} \label{8eq: lower bound}
		\sinh^2 r + \sin^2 \omega \Gt \sqrt{(x-1)^2 + y^2 } 
\end{align} 
on  $  \RI  (\delta, \rho; x + i y)$ for either    $ |x - 1| \Gt \delta $ or $y \Gt \delta$, then \eqref{8eq: bound area} follows immediately in this case.

Now we prove \eqref{8eq: lower bound}. For $ x \leqslant 1/2$, say, the second inequality in \eqref{8eq: defn I} implies $ \cos \omega \leqslant 1/\sqrt{2}$ (provided that $\rho \Lt 1$, so that $\cosh r$ is near $1$ and $\delta < \rho$ is  small), and hence \eqref{8eq: lower bound} is clear. For $x > 1/2$, observe that the second inequality in \eqref{8eq: defn I} implies 
\begin{align}\label{8eq: cos cosh, 2}
	\big|\sinh^2 r - \sin^2 \omega -  \sin^2 \omega \sinh^2 r   - (x^2-1)  \big| \Lt   \delta , 
\end{align}
due to $ \cos^2 \omega \cosh^2 r = 1 + \sinh^2 r - \sin^2 \omega -  \sin^2 \omega \sinh^2 r $. %Note that (3)  follows immediately from \eqref{8eq: cos cosh, 2}.  
In the case when   $ |x - 1| \Gt \delta $ and $y \Gt \delta$, the last inequality in  \eqref{8eq: defn I} and \eqref{8eq: cos cosh, 2} together yield 
\begin{align*}
\sinh^2 r - \sin^2 \omega \sasymp x^2-1, \qquad \sin \omega \sinh r \sasymp y ,
\end{align*}
and hence \eqref{8eq: lower bound} by $ \sinh^2 r + \sin^2 \omega = \sqrt{\big(\sinh^2 r - \sin^2 \omega\big)^2 + 4 \sin^2 \omega \sinh^2 r } $. The proof is similar for the remaining two cases when $ |x - 1| \Lt \delta $ or $y \Lt \delta$. 

Finally, in the case when $ |x - 1| \Lt \delta $ and $y \Lt \delta$, we have $|\cos \omega \cosh r - 1| \Lt \delta$ and $|\sin \omega \sinh r    | \Lt  \delta$ (so the Jacobian of $f$ could be very small or vanish). Since $(\cosh r - \cos \omega )^2 = (\cos \omega \cosh r - 1)^2 + (\sin \omega \sinh r)^2 $ and $ \cosh^2 r - \cos^2 \omega = \sin^2 \omega + \sinh^2 r$, it follows that the area of  $\RI  (\delta, \rho; x + i y)$ is bounded by $O (\delta)$, and hence \eqref{8eq: bound area}. Moreover, (3) is also clear from these arguments.
\end{proof}

In practice $z = \sqrt{n/m}$ ($m, n \in \frO' \smallsetminus \{0\}$). The simple lemma below will help us take care of the square root in the complex case, with (1)--(4) corresponding to \eqref{12eq: -}--\eqref{12eq: O+, 2} in \S \ref{sec: off, C}. 

\begin{lem}\label{lem: square root}
%	Let $x = \sqrt{x'}$. 
	
%	{\rm(1)} If $ |x-1| < \rho $, then $|x'-1| \Lt \rho$, and $|x'-1| \asymp |x-1|$. 
	
%	\noindent	
Write $z = x+iy$ and $z^2 = x_2+iy_2$.  Let $ y  \Lt \rho$.  
	
	{\rm(1)} If $ |x| \Lt \rho $, then $ |z^2  | \Lt \rho^2 $. 
	
	{\rm(2)} If $|x| \Gt \rho $, then $x_2 \asymp x^2$ and $y_2 \Lt \rho |x| $.
	
	{\rm(3)} If $ ||x|-1| \Lt \rho  $, then $ |z^2-1| \Lt \rho $  and $|z^2-1|^2 \asymp (|x|-1)^2 + y^2$. 
	
	{\rm(4)} If $1 - |x|  \Gt \rho   $, then $|x_2-1| \asymp 1-|x|$ and $y_2 \Lt \rho$.

\end{lem}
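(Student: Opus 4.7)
The plan is that, since $z=x+iy$, we have the explicit identity $z^{2}=(x^{2}-y^{2})+2ixy$, so that $x_{2}=x^{2}-y^{2}$ and $y_{2}=2xy$. All four claims then reduce to elementary real-variable estimates using only $|y|\Lt\rho$ and $\rho\Lt 1$. I would dispose of them in order.

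For \textup{(1)} the bound is immediate: $|z^{2}|=|z|^{2}=x^{2}+y^{2}\Lt\rho^{2}+\rho^{2}\Lt\rho^{2}$. For \textup{(2)}, the hypothesis $|x|\Gt\rho\geqslant|y|$ forces $y^{2}\Lt x^{2}$, so $x_{2}=x^{2}-y^{2}\asymp x^{2}$; and $|y_{2}|=2|x||y|\Lt\rho|x|$ directly.

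The heart of the argument (and the only place requiring a little care) is \textup{(3)}. I would expand
\[
|z^{2}-1|^{2}=(x^{2}-1-y^{2})^{2}+4x^{2}y^{2}=(x^{2}-1)^{2}+2y^{2}(x^{2}+1)+y^{4}.
\]
Since $||x|-1|\Lt\rho\Lt 1$ we have $|x|\asymp 1$, so $x^{2}+1\asymp 1$ and $(x^{2}-1)^{2}=(|x|-1)^{2}(|x|+1)^{2}\asymp(|x|-1)^{2}$. The $y^{4}$ term is absorbed into $2y^{2}(x^{2}+1)\asymp y^{2}$ because $y^{2}\Lt\rho^{2}\Lt 1$. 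This yields $|z^{2}-1|^{2}\asymp(|x|-1)^{2}+y^{2}$, and in particular $|z^{2}-1|\Lt\sqrt{\rho^{2}+\rho^{2}}\Lt\rho$, as required.

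Finally, for \textup{(4)} I would factor $x_{2}-1=-(1-|x|)(1+|x|)-y^{2}$; both terms are negative, the first has magnitude $\asymp 1-|x|$ since $|x|+1\asymp 1$, while the second satisfies $y^{2}\Lt\rho^{2}\leqslant\rho(1-|x|)\ll 1-|x|$ by the hypothesis $1-|x|\Gt\rho$. Hence $|x_{2}-1|\asymp 1-|x|$, and $|y_{2}|=2|x||y|\Lt\rho$ because $|x|\leqslant 1$.

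The whole lemma is purely computational, so there is no genuine obstacle; the only point where one has to be slightly attentive is in \textup{(3)}, where the cross term $2y^{2}(x^{2}+1)$ in the expansion of $|z^{2}-1|^{2}$ is the one that produces the $y^{2}$ contribution to the asserted asymptotic, and one should verify that $(x^{2}+1)\asymp 1$ follows from $||x|-1|\Lt\rho\Lt 1$ so this constant is harmless.
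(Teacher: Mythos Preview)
Your proof is correct. The paper itself does not supply a proof of this lemma, describing it only as a ``simple lemma'' and leaving the verification to the reader; your argument via the explicit formulae $x_{2}=x^{2}-y^{2}$ and $y_{2}=2xy$ is exactly the intended elementary computation.
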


\subsection{Estimates for the $\Psi$-integrals} \label{sec: estimates for Phi}

Let 
\begin{align}\label{8eq: rho and delta}
	  \rho = M^{\vepsilon}/ M, \qquad \delta = T^{\vepsilon} /  {\varDelta} . 
\end{align}
It is then clear that the $\Psi$-integrals $\Psi^{\pm} (x, \varDelta)$ and  $\Psi  (z, \varDelta)$ defined in Lemma \ref{lem: Hankel} are trivially bounded by the area  of $\RI^{\pm} ( \delta  , \rho; x) $ and  $\RI  ( \delta  , \rho; z) $ respectively. A direct consequence of Lemma \ref{lem: I-}, \ref{lem: I+}, and \ref{lem: I, C} is the following proposition. For brevity, we shall allow  $M^{\vepsilon}$ to absorb absolute constants---for example,   the factor $2$ in $|x| < 1+ 2 \rho$ and  the implied constant in $|y| \Lt \rho$ ($\rho = M^{\vepsilon}/ M$). 

\begin{prop}\label{lem: bounds for Phi, R}
Let  $ \Psi^{\pm} (x, \varDelta) $ and $\Psi  (z, \varDelta)$ be   as in {\rm\eqref{10eq: Phi+ (x), R}}, {\rm\eqref{10eq: Phi- (x), R}} and {\rm\eqref{10eq: Phi (x), C}}.	 
	  
	 {\rm(1)}  $\Psi^{-} (x, \varDelta)$ or $\Psi^{+} (x, \varDelta)$     is negligibly small unless   $x < M^{\vepsilon}/ M$ or  $|x-1| < M^{\vepsilon}/ M^2 $   respectively,   in which case 
	 \begin{align}
	 	\Psi^{-} (x, \varDelta)  \Lt \frac {T^{\vepsilon}} {\varDelta  },  \qquad \Psi^{+} (x, \varDelta) \Lt \frac {T^{\vepsilon}} {\varDelta \sqrt{|x-1|}}  .
	 \end{align}

{\rm(2)}  $\Psi  (x+i y, \varDelta) $ is negligibly small unless    $ |x| < 1 + M^{\vepsilon}/M  $ and $|y| < M^{\vepsilon}/M$, in which case   
\begin{align}
	\Psi  (x + iy , \varDelta) \Lt \frac {T^{\vepsilon}} {\varDelta^2   \sqrt{(|x|-1)^2 + y^2}   } .
\end{align} 
\end{prop}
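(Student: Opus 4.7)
The plan is to reduce each $\Psi$-integral to a trivial volume bound on the corresponding region $\RI^{\pm}$ or $\RI$, after truncating the integrand via the Schwartz decay of $\widehat{V}$. By \eqref{8eq: Schwartz, R} and \eqref{8eq: Schwartz, C}, the factor $\widehat{V}(\varDelta(\,\cdot\,))$ is negligibly small unless the argument of $\widehat{V}$ is $O(T^{\vepsilon})$, i.e.\ unless the relevant variable lies within $\delta = T^{\vepsilon}/\varDelta$ of the corresponding hyperbolic or trigonometric-hyperbolic quantity. Meanwhile, the $g(Mr)$ factor is Schwartz, and the outer cutoff $|r|\leqslant M^{\vepsilon}/M$ effectively restricts $r$ to the interval of length $\rho = M^{\vepsilon}/M$. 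Combining these two observations, one sees that up to a negligible error each of $\Psi^{\pm}(x,\varDelta)$ and $\Psi(z,\varDelta)$ is supported on the regions $\RI^{\pm}(\delta,\rho;x)$ and $\RI(\delta,\rho;z)$ respectively, and on these regions the integrand is $O(1)$.

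With this truncation in hand, the proposition reduces to inserting the measure bounds for these regions. For $\Psi^{-}(x,\varDelta)$, Lemma \ref{lem: I-} shows that $\RI^{-}(\delta,\rho;x)$ is empty unless $x\Lt \rho = M^{\vepsilon}/M$, and in that case has length $O(\delta)$, giving $\Psi^{-}(x,\varDelta)\Lt \delta = T^{\vepsilon}/\varDelta$. For $\Psi^{+}(x,\varDelta)$, Lemma \ref{lem: I+} shows that $\RI^{+}(\delta,\rho;x)$ is empty unless $|x-1|\Lt \rho^{2}$ (which we absorb into $|x-1|<M^{\vepsilon}/M^{2}$ by enlarging $\vepsilon$), and has length $O(\delta/\sqrt{|x-1|})$, giving the stated bound. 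For the complex case, Lemma \ref{lem: I, C} provides both the support constraints $|x|<1+M^{\vepsilon}/M$, $|y|<M^{\vepsilon}/M$ and the area bound $O(\delta^{2}/\sqrt{(|x|-1)^{2}+y^{2}})$, yielding $\Psi(x+iy,\varDelta)\Lt T^{\vepsilon}/(\varDelta^{2}\sqrt{(|x|-1)^{2}+y^{2}})$.

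There is essentially no obstacle here beyond being careful with the absolute constants: constants arising from the Schwartz tail of $\widehat{V}$, from replacing $(1+|u|/\log T)^{-A}$ by the indicator of $|u|\leqslant T^{\vepsilon}$, and from comparing $\rho^{2}=M^{2\vepsilon}/M^{2}$ with $M^{\vepsilon}/M^{2}$, are all absorbed into the ambient $T^{\vepsilon}$ factor by our $\vepsilon$-convention. Thus the proposition follows directly from Lemmas \ref{lem: I-}, \ref{lem: I+}, and \ref{lem: I, C}, exactly as advertised in the sentence preceding the statement.
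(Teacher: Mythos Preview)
Your proposal is correct and follows essentially the same approach as the paper: the paper states the proposition as ``a direct consequence of Lemma \ref{lem: I-}, \ref{lem: I+}, and \ref{lem: I, C}'' after noting that the $\Psi$-integrals are trivially bounded by the measure of $\RI^{\pm}(\delta,\rho;x)$ and $\RI(\delta,\rho;z)$, with $\rho = M^{\vepsilon}/M$ and $\delta = T^{\vepsilon}/\varDelta$ as in \eqref{8eq: rho and delta}. Your write-up simply unpacks this sentence, and the handling of constants via the $\vepsilon$-convention matches the paper's own remark that $M^{\vepsilon}$ is allowed to absorb absolute constants.
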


%The estimates $T^{\vepsilon}/\varDelta$ and  $T^{\vepsilon}/\varDelta^2$ as above are adequate  for $\Psi^{-} (x, \varDelta)$ and $\Psi  (x+i y, \varDelta) $, with $ |x|  \leqslant 1/2 $,  respectively. 
%Yet we have not taken advantage of the exponential factor $e (Tr/\pi)$. By recourse to partial integration for the $r$-integral, we now prove that  $\Psi^{+} (x, \varLambda)$ and $\Psi  (x+i y, \varDelta) $ are negligibly small under certain circumstances. 

%However, the estimates above are not sufficient for $ \Psi^{+} (1, \varDelta)$ and $\Psi (\pm 1, \varDelta)$, which will arise in the  diagonal frequency. Fortunately, 

Finally, by recourse to partial integration for the $r$-integral, we    prove that  $ \Psi^{+} (1, \varDelta)$ and $\Psi (\pm 1, \varDelta)$ are negligibly small for $\varDelta \leqslant T^{2-\vepsilon}$. 

\begin{prop}\label{prop: small Phi(1)} Let  $ \Psi^{+} (x, \varDelta) $ and $\Psi  (z, \varDelta)$ be defined  as in {\rm\eqref{10eq: Phi+ (x), R}} and {\rm\eqref{10eq: Phi (x), C}}.
	
	{\rm(1)} 
	We have $\Psi^{+} (1, \varDelta)  = O_{A, \vepsilon}  (T^{-A})$ if $\varDelta \leqslant T^{2-\vepsilon}$. 
	
	 {\rm(2)} We have $  \Psi  (\pm 1, \varDelta) = O_{A, \vepsilon}  (T^{-A})$ if $\varDelta \leqslant T^{2-\vepsilon}$. 
\end{prop}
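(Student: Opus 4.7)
The plan is to obtain cancellation by repeatedly integrating by parts in the variable $r$, exploiting the linear phase $e(Tr/\pi)$ in (1) and $e(2Tr/\pi)$ in (2), which oscillates at frequency $\asymp T$. In both cases the non-oscillatory factor $g(Mr)\widehat V(\cdots)$ has effective compact support thanks to the Schwartz decay of $g$ together with \eqref{8eq: Schwartz, R} or \eqref{8eq: Schwartz, C}, so the boundary contributions in IBP are negligible. After $n$ integrations by parts the integral is bounded by $T^{-n}\int|\partial_r^n F(r)|\,\nd r$, and the task is to estimate $\partial_r^n F$ on the effective support.

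For (1), write $F_1(r)=g(Mr)\widehat V(\varDelta(1-\cosh r))$. Since $1-\cosh r\sasymp -r^2/2$ near $r=0$, the decay of $\widehat V$ confines the effective support to $|r|\Lt\sqrt{\log T/\varDelta}$, and on this range the first derivative of the argument satisfies $|\partial_r[\varDelta(1-\cosh r)]|=\varDelta|\sinh r|\Lt\sqrt{\varDelta\log T}$, while the higher $r$-derivatives are each $\Lt\varDelta$. Applying Fa\`a di Bruno together with the Leibniz rule (each differentiation of $g(Mr)$ contributing $M$), the worst partition in $n$ parts yields $|\partial_r^n F_1(r)|\Lt\max(M,\sqrt{\varDelta\log T}\,)^n T^{\vepsilon}$, so $|\Psi^{+}(1,\varDelta)|\Lt(\max(M,\sqrt{\varDelta\log T}\,)/T)^n\,T^{\vepsilon}$. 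Since $M\leqslant T^{1-\vepsilon}$ and $\varDelta\leqslant T^{2-\vepsilon}$, the ratio is $\Lt T^{-\vepsilon/3}$, and taking $n$ sufficiently large in terms of $A$ and $\vepsilon$ gives $O_{A,\vepsilon}(T^{-A})$.

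For (2), the same IBP scheme applies in $r$ for each fixed $\omega$. In the case $z=1$, the Taylor expansion $1-\trh(r,\omega)=\tfrac12(r^2-\omega^2)+ir\omega+O((r^2+\omega^2)^2)$ shows that $\widehat V(\varDelta(1-\trh(r,\omega)))$ is negligible outside the region $r^2+\omega^2\Lt\log T/\varDelta$, while near $\omega=\pi$ one has $|1-\trh|\geqslant 1$ and the integrand is super-polynomially small. On the effective support the relevant $r$-derivative is $-\varDelta(\sinh r\cos\omega+i\cosh r\sin\omega)$, of modulus $\Lt\varDelta(|\sinh r|+|\sin\omega|)\Lt\sqrt{\varDelta\log T}$, while higher $r$-derivatives remain $\Lt\varDelta$. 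The same partition analysis as in (1) then yields the required decay, while the extra $\omega$-integration over an effective range of length $O(\sqrt{\log T/\varDelta})$ does not disturb the $T$-dependence. The case $z=-1$ is symmetric near $\omega=\pi$ (using the analogous Taylor expansion $-1-\trh(r,\omega)\approx\tfrac12(r^2-(\omega-\pi)^2)+ir(\omega-\pi)$).

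The main subtlety is that $\partial_r[\varDelta(1-\trh(r,\omega))]$ is not globally small: away from the stationary point $(r,\omega)=(0,0)$ or $(0,\pi)$ it can be of size $\varDelta$, and a naive single IBP would then gain only $\varDelta/T$, which is insufficient when $\varDelta\approx T^2$. The resolution is a bootstrap: $\widehat V$ itself localizes the integrand to within $O(1/\sqrt\varDelta)$ of the stationary point, and it is precisely on that localization that the first derivative drops from $\varDelta$ down to $\sqrt{\varDelta\log T}$. Carefully propagating this improvement through every Fa\`a di Bruno term for higher $r$-derivatives---where the extremal partition (all parts equal to $2$) produces $\varDelta^{n/2}(\log T)^{n/2}$ and thus matches $(\sqrt{\varDelta\log T})^n$ up to log factors---is the main technical point.
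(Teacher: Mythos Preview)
Your proposal is correct and follows essentially the same route as the paper's proof: repeated integration by parts in $r$ against the phase $e(Tr/\pi)$ (or $e(2Tr/\pi)$), Fa\`a di Bruno for the higher $r$-derivatives of $\widehat V(\varDelta(\cdot))$, and the key localization that on the effective support one has $\sinh r$ (and, in the complex case, also $\sin\omega$) of size $\Lt\sqrt{\delta}$ with $\delta=T^{\vepsilon}/\varDelta$, so that each IBP gains a factor $\sqrt{\varDelta}/T^{1-\vepsilon}$. The paper packages the localization step via Lemma~\ref{lem: I+}~(3) and Lemma~\ref{lem: I, C}~(3) and the smooth truncation at $|r|\leqslant\rho$, whereas you invoke the Schwartz decay of $g$ and $\widehat V$ directly, but the substance is identical.
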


\begin{proof}
%Let $\rho$ and $\delta$ be as in \eqref{8eq: rho and delta}. 	
There are three steps.  First,   smoothly truncate the $r$-integral to the range $|r| \leqslant \rho$. Second, repeat partial integration.  Fa\`a di Bruno's formula (\cite{Faa-di-Bruno}) and its extension are required to calculate the higher $r$-derivatives of  $  \widehat{V}  ( \varDelta  (x - \cosh r )   )$ and $\widehat{V}  ( \varDelta  ( z -  \trh (r, \omega)    )   )$. Third,   confine the integration to  the region $\RI^{+} (\delta, \rho; 1) $  or  $\RI (\delta, \rho; \pm 1) $, and use the bounds for $\sinh r$ or $\sin \omega$ in Lemma \ref{lem: I+}  (3) or Lemma \ref{lem: I, C} (3), respectively. In this way, one obtains   high powers of $ \varDelta \sqrt{\delta}  / T = \sqrt{\varDelta} / T^{1-\vepsilon}$.   The details are left to the   readers. 
\end{proof}

\subsection{Remarks on the Complex Case} The results in the complex case may be improved when $ x $ is close to $\pm 1$, in correspondence to the case of $\Psi^+ (x, \varDelta)$. However, the improvements will not be useful, since the worst case scenario is when $x$ stays away from $0$ and $\pm 1$, say around $1/2$. See \S \ref{sec: off, C}.

\section{Mellin Transform  of Bessel Kernels}\label{sec: Mellin}

In this section, we derive explicit formulae for the Mellin transform  of the Bessel kernel $B_{it} (x)$ and $B_{it} (z)$.  To be precise, define
\begin{align}\label{9eq: Mellin, R, 0}
\widetilde{B}_{it} (s) =	\int   B_{it} (x  ) |x|^{  s - 1}  { \nd x },
\end{align}
or
\begin{align}\label{9eq: Mellin, C, 0}
	\widetilde{B}_{it} (s) = \iint   B_{it} (z  ) |z|^{ 2 s - 2}  { \nd z },
\end{align}
according as $F_{\infty}$ is real or complex. 
%It suffices to consider $  M_{\rho, s} (1)$ since 
%\begin{align}
%	M_{\rho, s} (y) = |y|^{-  \rho} M_{\rho, s} (1)  , \qquad M_{\rho, s} (u) = |u|^{ - 2 \rho } M_{\rho, s} (1)   . 
%\end{align} 

\begin{lem}\label{lem: Mellin}
	 For $ |\mathrm{Im} (t)| < \mathrm{Re} (s) < \frac 1 4  $  the Mellin integral $\widetilde{B}_{it} (s)$ in {\rm\eqref{9eq: Mellin, R, 0}} or {\rm\eqref{9eq: Mellin, C, 0}} is absolutely convergent, and 
	 \begin{align}\label{9eq: Mellin=gamma}
	 	\widetilde{B}_{it} (s) = \frac{\gamma (s, t)}{\gamma (1-s, t)},
	 \end{align}
 with $\gamma (s, t)$ defined in {\rm\eqref{4eq: defn of gamma (s, f)}}.
\end{lem}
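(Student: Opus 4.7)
The plan is to compute $\widetilde B_{it}(s)$ directly from Definition \ref{defn: Bessel kernel} using classical Mellin transforms of the $J$- and $K$-Bessel functions, and then to identify the answer with $\gamma(s,t)/\gamma(1-s,t)$ via the reflection and (in the real case) duplication identities for $\Gamma$. Absolute convergence on the strip $|\mathrm{Im}(t)| < \mathrm{Re}(s) < 1/4$ comes at once from the small-argument asymptotics $J_\nu(y) \asymp y^{\mathrm{Re}(\nu)}$ and $K_\nu(y) \asymp y^{-|\mathrm{Re}(\nu)|}$ (which require $\mathrm{Re}(s) > |\mathrm{Im}(t)|$ at the origin after substituting $y = 4\pi|x|^{1/2}$ or $y = 4\pi|z|^{1/2}$), together with the $y^{-1/2}$ oscillation of $J_\nu$ and the exponential decay of $K_\nu$ at infinity; the upper bound $\mathrm{Re}(s) < 3/4$ from Weber--Schafheitlin theory comfortably accommodates $\mathrm{Re}(s) < 1/4$.

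For the real case ($N = 1$) I would split $\widetilde B_{it}(s) = \int_0^\infty B_{it}(x)x^{s-1}dx + \int_0^\infty B_{it}(-x)x^{s-1}dx$, substitute $y = 4\pi\sqrt{x}$ in each half, and apply the classical formulas
\[
\int_0^\infty J_\nu(y)\,y^{2s-1}\,dy = \frac{2^{2s-1}\Gamma(s+\nu/2)}{\Gamma(1-s+\nu/2)}, \qquad \int_0^\infty K_\nu(y)\,y^{2s-1}\,dy = 2^{2s-2}\Gamma(s+\nu/2)\Gamma(s-\nu/2)
\]
with $\nu = \pm 2it$. Using the reflection formula $\Gamma(z)\Gamma(1-z) = \pi/\sin(\pi z)$ and the identity $\sin \pi(s+it) - \sin \pi(s-it) = 2i\cos(\pi s)\sinh(\pi t)$, the $J$-piece in $B_{it}(x)$ collapses to $2\cos(\pi s)(4\pi^2)^{-s}\Gamma(s+it)\Gamma(s-it)$, while the $K$-piece in $B_{it}(-x)$ evaluates to $2\cosh(\pi t)(4\pi^2)^{-s}\Gamma(s+it)\Gamma(s-it)$, so that
\[
\widetilde B_{it}(s) = 2(4\pi^2)^{-s}\Gamma(s+it)\Gamma(s-it)\bigl(\cos(\pi s) + \cosh(\pi t)\bigr).
\]
I would then unfold the putative answer $\gamma(s,t)/\gamma(1-s,t)$ for $N=1$ by Legendre duplication $\Gamma(z)\Gamma(z+\frac12) = 2^{1-2z}\sqrt{\pi}\,\Gamma(2z)$ applied to each half-integer $\Gamma$, clear the cross terms $\Gamma(\frac{1-s\mp it}{2})\Gamma(\frac{1+s\pm it}{2})$ by a second reflection to $\pi/\cos(\pi(s\pm it)/2)$, and recognize $\cos(\pi(s+it)/2)\cos(\pi(s-it)/2) = \frac12(\cos(\pi s) + \cosh(\pi t))$ from the product-to-sum formula; this reproduces the displayed expression exactly.

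For the complex case ($N = 2$) I would pass to polar coordinates $z = re^{i\theta}$, with the Haar measure $dz = 2r\,dr\,d\theta$, so that $\sqrt{z} = \sqrt{r}\,e^{i\theta/2}$ and $\sqrt{\bar z} = \sqrt{r}\,e^{-i\theta/2}$. Expanding $J_{\pm 2it}(4\pi\sqrt{z})J_{\pm 2it}(4\pi\sqrt{\bar z})$ as a double power series, the angular orthogonality $\int_0^{2\pi}e^{i\theta(k-m)}d\theta = 2\pi\delta_{k,m}$ selects the diagonal $k = m$ and reduces everything to a single radial Mellin integral, which (after regularization, since the diagonal tail does not converge termwise; a convergence factor $e^{-\epsilon r}$ or a contour-shifting argument is needed) evaluates in closed form as a ratio of gammas. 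One further application of reflection to absorb the $\sin(2\pi it)$ prefactor in $B_{it}(z)$ yields the ratio $(2\pi)^{2-4s}\Gamma(s+it)\Gamma(s-it)/[\Gamma(1-s+it)\Gamma(1-s-it)]$, which is $\gamma(s,t)/\gamma(1-s,t)$ for $N = 2$ directly---no duplication step is needed here, since $\gamma$ already involves only the untouched $\Gamma(s\pm it)$.

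The main obstacle will be the $\Gamma$-bookkeeping in the real case, where the sines and cosines from $B_{it}$, from reflection, and from duplication must all conspire correctly so that the two halves of the real line assemble into the clean symmetric ratio \eqref{9eq: Mellin=gamma}. In the complex case the subtlety is instead the justification of the termwise sum--integral exchange, which is purely formal and needs a regularization argument. Both calculations are otherwise routine, and very closely related forms of each have already appeared in \cite{Qi-VO, Qi-GL(3)}, from which the result may simply be quoted if a full direct derivation is deemed excessive.
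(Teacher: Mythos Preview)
Your real-case computation is essentially identical to the paper's: both split $\widetilde B_{it}(s)$ over $x>0$ and $x<0$, apply the classical Mellin transforms of $J_\nu$ and $K_\nu$, and arrive at the intermediate form $2(2\pi)^{-2s}\Gamma(s+it)\Gamma(s-it)(\cos\pi s+\cosh\pi t)$, which is then matched with $\gamma(s,t)/\gamma(1-s,t)$ via reflection and Legendre duplication. Your bookkeeping is correct.

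For the complex case the two approaches diverge. The paper does not compute directly but quotes a closed-form evaluation of the polar integral of $\boldsymbol J_\mu(z)=\frac{1}{\sin\pi\mu}\bigl(J_{-\mu}(4\pi z)J_{-\mu}(4\pi\bar z)-J_\mu(4\pi z)J_\mu(4\pi\bar z)\bigr)$ from \cite{Qi-BE} (Theorem 1.1 and Proposition 3.2, specialised to $d=0$, $y=0$), obtaining
\[
\iint B_{it}(z)\,|z|^{2s-2}\,\nd z=\frac{2(\cos 2\pi it-\cos 2\pi s)}{(2\pi)^{4s}}\,\Gamma(s+it)^2\Gamma(s-it)^2,
\]
and then applies reflection once to reach $\gamma(s,t)/\gamma(1-s,t)$. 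Your proposed route---power-series expansion plus angular orthogonality---is elementary in spirit but has a real gap at the regularisation step. After the $\theta$-integration the diagonal sum is $\sum_{k\geqslant0}(2\pi)^{2\nu+4k}r^{\nu+2k}/(k!\,\Gamma(\nu+k+1))^2$, with all terms of the same sign; the termwise radial integrals $\int_0^\infty r^{\nu+2k+2s-1}\,\nd r$ diverge, and inserting $e^{-\epsilon r}$ gives $\Gamma(\nu+2k+2s)\epsilon^{-(\nu+2k+2s)}$, each of which also blows up as $\epsilon\to0$. One must either first resum the diagonal series into a recognised special function (essentially a ${}_1F_2$ or a Meijer $G$) whose Mellin transform is tabulated---which amounts to reproving the formula in \cite{Qi-BE}---or replace the power series by a Mellin--Barnes representation of $J_\nu$ so that the radial integral becomes a delta-constraint on the contour variables. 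Either way the ``otherwise routine'' qualifier undersells the work required. Your closing suggestion to simply quote the result from the references is exactly what the paper does and is the cleanest resolution.
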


\begin{proof}
For $ |\mathrm{Im} (t)| < \frac 1 4 $ we have crude estimates:
\begin{align*}%\label{9eq: crude bounds for Bs}
	B_{it} (x) \hskip -1 pt \Lt_{t, \vepsilon} \hskip -2 pt \min \bigg\{ \hskip -1 pt \frac 1  {|x|^{ |\mathrm{Im} (t)| + \vepsilon}}, \frac 1 {\sqrt[4]{|x|}} \hskip -1 pt \bigg\}, \quad B_{it} (z) \hskip -1 pt \Lt_{t, \vepsilon} \hskip -2 pt \min \bigg\{ \hskip -1 pt \frac 1  {|z|^{ |\mathrm{Im} (2t)| + \vepsilon}}, \frac 1 {\sqrt{|z| }} \hskip -1 pt \bigg\}  , 
\end{align*} 
so the convergence of integrals is clear.

For the real case, by \cite[\S 7.7.3 (19), (27)]{ET-II}, along with  Euler's reflection formula, we have
\begin{align*}
	\int_0^{\infty} J_{\mu} (4 \pi x) x^{\rho-1} \nd x = \frac { 1 } {  (2\pi)^{\rho+1}  } \sin \bigg( \frac { \pi (\rho - \mu)  } 2  \bigg) \Gamma \bigg(\frac { \rho + \mu  } 2 \bigg)  \Gamma  \bigg(\frac { \rho - \mu  } 2 \bigg) ,
\end{align*}
for $- \mathrm{Re} (\mu) < \mathrm{Re} (\rho) < \tfrac 1 2$, and
\begin{align*}
	\int_0^{\infty} K_{\mu} (4 \pi x) x^{\rho-1} \nd x = \frac {  1 } {  4 (2\pi)^{\rho}  } \Gamma \bigg(\frac { \rho + \mu  } 2 \bigg)  \Gamma  \bigg(\frac { \rho - \mu  } 2 \bigg), 
\end{align*} 
for $|\mathrm{Re} (\mu)| <  \mathrm{Re} (\rho  )$.

For the complex case, we have
\begin{equation*} 
		  \int_{0}^{2 \pi} \int_0^\infty \boldJ_{ \mu }     (  x e^{i\phi}  )   x^{2 \rho - 1}  \nd x \shskip \nd \phi 
		=    \frac {     \cos (\pi \mu) - \cos (\pi   \rho) } {  (2 \pi)^{ 2 \rho + 2 }  } \Gamma \bigg(\frac { \rho + \mu  } 2 \bigg)^2 \Gamma  \bigg(\frac { \rho - \mu  } 2 \bigg)  ^2 .  
\end{equation*}
for  $ |\mathrm{Re} (\mu)| < \mathrm{Re} (\rho) < \frac 1 2  $, with 
\begin{equation*}%\label{0eq: defn of Bessel}
	\boldsymbol{J}_{ \mu } (z)   =  \frac {1} {\sin (\pi \mu)} \big( J_{-\mu   } (4 \pi  z) J_{-\mu   } (4 \pi \widebar z) -  J_{ \mu   } (4 \pi  z) J_{ \mu   } (4 \pi \widebar z) \big) .
\end{equation*} 
This is a simple consequence of Theorem 1.1 and Proposition 3.2 in \cite{Qi-BE}, specialized to the case  $d = 0$ and $y = 0$. Note that Gauss' hypergeometric function is equal $1$ at the origin. 

In view of  Definition \ref{defn: Bessel kernel},  one   derives 
\begin{align*}%\label{9eq: Mellin, R}
	\int   B_{it} (x  ) |x|^{  s - 1}  { \nd x }  & = \frac { 2 \lp \cos (\pi  it) + \cos (\pi  s) \rp } {   (2\pi   )^{2s}   } \Gamma  (    s + it   ) \Gamma  (     s - it  ) , \\
	%\label{9eq: Mellin, C}
	\iint   B_{it} (z  ) |z|^{ 2 s - 2}  { \nd z } & = \frac {  2  \lp \cos (2\pi it) - \cos (2\pi   s) \rp } {       (2 \pi    )^{ 4 s   }  } \Gamma    ( s +   it  ) ^2 \Gamma  ( s  -   it  )^2.
\end{align*}
Then \eqref{9eq: Mellin=gamma} readily follows from Euler's reflection formula and Legendre's duplication formula (the latter is needed only for the real case).  
\end{proof}

\begin{rem}
	The formula {\rm\eqref{9eq: Mellin=gamma}} can also be interpreted from the view point of representation theory for local functional equations. See {\rm\cite[\S 17]{Qi-Bessel}}. 
\end{rem}

{ \large \part{The Twisted First and Second Moments}}

\section{Setup: Application of  the Kuznetsov Formula}

Now we turn to the investigation of the   twisted first and second moments:
\begin{align}
	\SM_q (\frm ) =  \sumh_{f \in  \SB  } \hskip -1pt    k  ( t_f ) \lambda_f  ( \frm )    L \big( \tfrac 1 2 , f \big)^q    
\end{align} 
for $q = 1$ or $2$, and weight function $ k (t) $ defined as in \eqref{1eq: defn of kq} or \eqref{5eq: defn k(nu)}. In the sequel, we shall always let $\frm = m \frD$. 

By the    Approximate Functional Equations \eqref{5eq: AFE, 1} and \eqref{5eq: AFE, 2},    we infer that
\begin{align}
	\SM_q (\frm ) = 2   \sum_{\frn \shskip \subset \shskip \frO }  \frac {   \tau (\frn)^{q-1} } { \sqrt{\RN  (  \frn   )}  }      \sumh_{f \in  \SB  } \hskip -1pt    k  ( t_f ) \lambda_f  ( \frm ) \lambda_f  (\frn ) V_q   (  \RN   (  \frn  \frD^{-q}  ); t_f   )    .
\end{align}  
In view of  \eqref{1eq: derivatives for V(y, t), 1} in Lemma \ref{lem: afq} (1), at the cost of a negligible error term, we may truncate the summations over $\frn$ to the range $ \RN (\frn) \leqslant T^{  q N  + \vepsilon}$.  

Next, we use the expressions  of $V_q  \lp  \RN   (  \frn  \frD^{-q}  ); t    \rp$  as in \eqref{1eq: approx of V1} and \eqref{1eq: approx of V2} in Lemma \ref{lem: afq} (1) with $U = \log T$ (so that the errors therein are negligible), and then apply the Kuznetsov trace formula in Proposition \ref{prop: Kuznetsov} inside the $v$-integral with test function:  
\begin{align} \label{9eq: h (t; v)}
h^q (t; v) =	k (t) G (v, t)^q ;
\end{align} 
see \eqref{5eq: defn h(nu)}.   Moreover, for the diagonal   and the Eisenstein  contributions, with the loss of     negligible errors, we revert the $v$-integral to $V_q  (  \RN   (  \frn  \frD^{-q}  ); t    )$, and for the latter convert the $\frn$-sum to $ \left| \zeta_F \big(\tfrac 1 2 + it \big) \right|^{2q} $ by the  Approximate Functional Equations \eqref{5eq: AFE zeta, 1} and \eqref{5eq: AFE zeta, 2}. 
It follows that 
\begin{align}
	\SM_q (\frm )   =   \SD_q (\frm ) - \SE_q (\frm ) + \SO_q (\frm ) +  O \lp T^{-A} \rp,   
\end{align}
where $\SD_q (\frm ) $ is the diagonal term (it exists when $\RN (\frm) \leqslant T^{q N + \vepsilon}$)
\begin{align}\label{9eq: Dp(m)}
	  \SD_q (\frm ) = 2 c_1 \frac { \tau (\frm)^{q-1}} {\sqrt{\RN(\frm)}}\SDH_q (\frm  ),
\end{align}
with
\begin{align}\label{9eq: Hp(m)}
\SDH_q (\frm ) = \int_{-\infty}^{\infty} k (t) V_q   (  \RN   (  \frm  \frD^{-q}  ); t    ) \nd \shskip \mu (t) , 
\end{align}
$\SE_q (\frm )$ is the   Eisenstein (continuous spectrum)  term 
\begin{align}\label{9eq: Ep(m)}
	\SE_q (\frm ) =   \frac {1}  {4\pi} c_0  \int_{-\infty}^{\infty}    k (t) \tau_{it} (\frm ) \omega (t)   \left| \zeta_F \big(\tfrac 1 2 + it \big) \right|^{2q} 
	 \shskip   \nd t, 
\end{align}
and $\SO_q (\frm )$ is the off-diagonal term
\begin{align}\label{9eq: O1(m)}
\SO_1 (\frm) & =	\frac {2  } {    \pi i   } \frac {c_2} {\sqrt{|d_F|}} \int_{  \shskip \vepsilon - i \log T}^{\vepsilon + i \log T}    \SO_1 (\frm; v ) \frac {\nd v} {v}, \\
\label{9eq: O2(m)}\SO_2 (\frm) & =	\frac {2  } {    \pi i   } \frac {c_2} {\sqrt{|d_F|}} \int_{  \shskip \vepsilon - i \log T}^{\vepsilon + i \log T}    \SO_2 (\frm; v ) \zeta (1+2v) |d_F|^{ v} \frac {\nd v} {v}, 
\end{align} 
with 
\begin{align}\label{8eq: O (m), 0} 
		\SO_q (\frm; v ) =  \sum_{(c)   \subset \shskip \frO  } \frac { 1 } { |\RN (c  )| }  \mathop{ \sum_{n \shskip \in \shskip \frO'   } }_{ |\RN(n  )| \shskip \leqslant T^{q N  +\vepsilon}  } \frac {   \tau (n   \frD)^{q-1} } {  {|\RN  (  n      )|^{1/2+v}}  }   {S ( m,   n   ; c  ) }   \SDH_q \bigg( \frac { mn } {  c^2    } ; v \bigg),  
\end{align}  
and
\begin{align}\label{9eq: Hp(x)}
	\SDH_q (x; v) = \int_{-\infty}^{\infty} h^q (t; v) B_{i t} (x ) \nd \shskip \mu (t) . 
\end{align}
Note that the factor $2$ arises in \eqref{9eq: O1(m)} and \eqref{9eq: O2(m)} when we combine the $\epsilon$- and $\frn$-sums into an $n$-sum,  and   fold the $c$-sum into a  $(c)$-sum over ideals. 

%\vskip 5pt

%In the next two sections, we shall  derive the following asymptotic formulae for  $\SM_1 (\frm )$ and $\SM_2 (\frm )$ in Theorem \ref{thm: moment}. 

\section{The Twisted First Moment} \label{sec: 1st moment}

%In this section, we prove the asymptotic formula \eqref{10eq: 1st moment} for $ \SM_1 (\frm ) $. 

%\subsection{The Diagonal Term} 

Let us first treat the diagonal term $ \SD_1 (\frm) $ as defined by \eqref{9eq: Dp(m)} and \eqref{9eq: Hp(m)}. It contains the main term for $ \SM_1 (\frm )$.

Recall  the definitions of  $\nd \mu (t)$ and     $  k (t) $ given by \eqref{1eq: defn Plancherel measure} and \eqref{5eq: defn k(nu)}. Now we apply  \eqref{4eq: asymptotic for V1}  in Lemma   \ref{lem: afq} (1) to analyze $ \SDH_1 (\frm)$. The main term   yields
\begin{align*}
	\int_{-\infty}^{\infty} k (t)   \nd \shskip \mu (t) = 2 \sqrt{\pi} M T^{N}     \big(  1 + O  \big( (M/T)^2 \big)  \big), 
\end{align*}
which can be easily seen by truncation  near $t = \pm T$ and the change of variable $t \ra M t \pm T$.   The error-term contribution is bounded by  $   (  \RN (\frm) / T^N )^{A }$  and hence  negligibly small if $\RN (\frm) \leqslant T^{N-\vepsilon}$ and $A $ is large in terms of $\vepsilon$. 
We conclude that 
\begin{align}\label{10eq: diagonal, D1}
	\SD_1 (\frm ) =  4 \sqrt{\pi} c_1 \frac { M T^N    } {\sqrt{\RN(\frm)}}    \big(  1 + O_{\vepsilon}  \big( (M/T)^2 \big)   \big) . 
\end{align}

%\subsection{The Eisenstein Term} 

For the Eisenstein term $\SE_1 (\frm)$, on inserting \eqref{2eq: subconvex} and \eqref{3eq: bound for omega(t)} into \eqref{9eq: Ep(m)} and estimating the integral trivially, we obtain
\begin{align}\label{10eq: bound for E1(m)}
	\SE_1 (\frm) = O_{\vepsilon} \big(  M T^{2 N \theta + \vepsilon}  \big) . 
\end{align}
However,   \eqref{10eq: bound for E1(m)} may be improved into 
\begin{align}\label{10eq: bound for E1(m), 2}
	\SE_1 (\frm) = O_{\vepsilon}  (  M T^{ \vepsilon}   ),
\end{align} 
if $M \geqslant T^{\frac {1273} {4053}+\vepsilon}$ for $F = \BQ$ or $M \geqslant T^{\frac 7 8 + \vepsilon } $ for $F = \BQ (\sqrt{d_F})$. For this use the estimate for  the second
moment of $\zeta  \big(\frac 1 2 + it \big)$ on short intervals in \cite[Theorem 3]{BW-Riemann-2}  or  the asymptotic formula  for the second moment of $\zeta_F \big(\frac 1 2 + it \big)$ in \cite{Muller-Dedekind-Quadratic}. 

%\subsection{The Off-diagonal Term} 
Finally, we consider the off-diagonal term $\SO_1 (\frm  )$  given by  \eqref{9eq: O1(m)},  \eqref{8eq: O (m), 0}, and \eqref{9eq: Hp(x)}.  Since   $|\RN( m  )| \shskip \leqslant T^{N  - \vepsilon}$ and  $|\RN(n  )| \shskip \leqslant T^{N  +\vepsilon}$, one may adjust $\vepsilon$ so that $ \left|m n/c^2 \right| \Lt T^2$, and Lemma \ref{lem: H(x), |z|>1} (3)--(6) implies that $\SDH_1  (   { mn } / {  c^2    } ; v  )$,  $\SO_1 (\frm; v )$, and hence   $\SO_1 (\frm  )$ are negligibly small. A remark  is that Weil's bound for $S (m, n; c)$ is needed (one could use $O \big(\sqrt{\RN(c\frm)}\tau (c) \big)$) to ensure that the $(c)$-sum is convergent. 

In conclusion, the asymptotic formula \eqref{10eq: 1st moment} in Theorem \ref{thm: moment} is established on the foregoing arguments.

\section{The Twisted Second Moment}\label{sec: 2nd moment}

This section is devoted to the proof of the asymptotic formula \eqref{10eq: 2nd moment} for $ \SM_2 (\frm)$  in Theorem \ref{thm: moment}. %Our focus will be on the off-diagonal term $ \SO_2 (\frm ) $.

%\subsection{The Diagonal Term}

The analysis of $\SD_2 (\frm)$, albeit slightly more involved,  is  similar to that of $\SD_1 (\frm)$. By \eqref{4eq: asymptotic for V2} and \eqref{4eq: asymp of psi} in Lemma   \ref{lem: afq} (2),   $\SDH_2 (\frm)$ is equal to
\begin{align*}
	 \int_{-\infty}^{\infty} k (t) \Big( \gamma_{1} \Big( N \log {\textstyle \sqrt{\tfrac 1 4 + t^2}} - \log   {\sqrt{\RN(\frm)}} \Big) + \gamma_0 ' \Big) \nd \shskip \mu (t) + O _{\vepsilon}  \big(M T^{N-2}  \big),  % \big(  \gamma_{0} + \gamma_{1} \big( N \log \lp \RC (t) / 2\pi \rp  -  \log  \big( \sqrt{\RN(\frm)} / |d_F|\big)  \big)\big)  \nd \shskip \mu (t) + O  \big(M T^{N-1}  \big),
\end{align*}
with $\gamma_0'$ defined as in Theorem \ref{thm: moment}. Consequently, 
\begin{align}\label{10eq: diagonal, D2}
	\SD_2 (\frm ) = 4 \sqrt{\pi} c_1 \frac { \tau (\frm) M T^N  } {\sqrt{\RN(\frm)}} \bigg(   \gamma_{1} \log \frac {T^N} {\sqrt{\RN(\frm)}} + \gamma_0 '      + O_{\vepsilon}  \big(   (M/T)^2 \log T \big)  \bigg) . 
\end{align}
It should be stressed that  $ \SD_2 (\frm) $ only contributes {\it half}   the main term for $ \SM_2 (\frm )$.

%\subsection{The Eisenstein Term} 
The trivial estimate for $\SE_2 (\frm)$ obtained from  \eqref{2eq: subconvex} and \eqref{3eq: bound for omega(t)} is as follows:
\begin{align}\label{10eq: bound for E2(m)}
	\SE_2 (\frm) = O_{\vepsilon} \big( M T^{4 N \theta + \vepsilon}  \big) . 
\end{align}
By \eqref{2eq: subconvex} and \eqref{10eq: bound for E1(m), 2}, we improve \eqref{10eq: bound for E2(m)}  into 
\begin{align}\label{10eq: bound for E2(m), 2}
	\SE_2 (\frm) = O_{\vepsilon} \big( M T^{2 N \theta + \vepsilon}  \big) , 
\end{align} 
for  $M \geqslant T^{\frac {1273} {4053}+\vepsilon}$  or $M \geqslant T^{\frac 7 8 + \vepsilon } $ according as $F = \BQ$ or $  \BQ(\sqrt{d_F})$. 
Further, if $F = \BQ$, then  \eqref{10eq: bound for E2(m), 2} may be improved into
\begin{align}\label{10eq: bound for E2(m), Q}
	\SE_2 (m) = O_{\vepsilon} \big(  M T^{    \vepsilon}  \big) 
\end{align} 
for $M \geqslant T^{\frac 2 3 + \vepsilon } $, by the estimate   for the fourth  
moment  of $\zeta  \big(\frac 1 2 + it \big)$ on short intervals in \cite[\S 6]{Ivic-Riemann-4} (see also \cite{IM-4th-Moment}).  As for the fourth moment of $ \zeta_F \big(\frac 1 2 + it \big) $ for $F = \BQ (\sqrt{d_F})$, an explicit spectral formula is known over the Gaussian field in \cite{B-Mo} but currently we  do not know how it can be used to obtain non-trivial estimate (asymptotic is beyond our reach as $ | \zeta_F (s) |^4$ is of degree $8$). 

%\subsection{The Off-diagonal Term}

%This section is devoted to   the study of  the off-diagonal term $ \SO_2 (\frm ) $  given by  \eqref{9eq: Op(m)} and \eqref{8eq: O (m)}. Given the analysis in Part \ref{part: analysis}, the reader is suggested to work out the case $F = \BQ$ as an exercise prior to reading this section. For the general case, although the strategy is the same, the notation becomes more complicated, especially at the end if there is a complex place. 
 
Now we turn to the study of  the off-diagonal term $ \SO_2 (\frm ) $ (see  \eqref{9eq: O2(m)}--\eqref{9eq: Hp(x)}). 

First of all, by Lemma \ref{lem: H(x), |z|>1} (3)--(6), one may  impose the condition $      \big| m n       / c^2 \big| \Gt  T^{2 } $ to the summations, with the cost of a negligible error.  
Let   $\sum_{R} \varvv  (|x| / R ) $ be a dyadic partition of unity for $\Fx_{\infty}$, with  $R  = 2^{j  / 2}$  and $ \varvv  (r) \in C_c^{\infty} [1, 2]$. It may be exploited to partition the sum in \eqref{8eq: O (m), 0}   into $O  ( \log T  )$ many sums of the form 
\begin{align}\label{12eq: a-sum}
	\begin{aligned}
		\SO_2 (\frm;  R ; v ) =  \frac 1 {R^{N/2+N v}} & \mathop{\sum_{(c)   \subset \shskip \frO  }}_{|c|  \Lt \sqrt{|m| R}/ T} \frac { 1 } { |\RN (c  )| } \\
	& \cdot \sum_{n \shskip \in \shskip \frO' \smallsetminus \{0\}   }     {   \tau (  n \frD)  }    S ( m,   n   ; c  )  \varww \lp \frac n R   ,  \frac {m R}   {c^2} ; v \rp ,
	\end{aligned}
\end{align} 
for   $R  \leqslant T^{2 +\vepsilon}$, where 
\begin{align*}
	  \varww \lp x   ,   \varLambda ; v \rp  =   {\varww (|x|; v) \SDH_2  ( \varLambda x ; v  ) }   , \qquad \varww (r; v) = \frac{\varvv (r)   } {r^{N/2 + N v}}. %, \quad \SDH  (   x  ) = \SDH_2  (   x; v  ) .
\end{align*}
Clearly, the weight function $\varww \lp x   ,   \varLambda; v \rp$ is of the form in \eqref{11eq: defn of w (x, Lmabda), R} or \eqref{11eq: defn of w (z, Lmabda), C}. Note that  $ \varww^{(j)} (r; v)  \Lt_{j } (\log T)^{j} $ holds  
uniformly  for $v \in [\vepsilon - i \log T, \vepsilon + i \log T]$.

\subsection{Application of the  Vorono\"i   Summation}

Next, in \eqref{12eq: a-sum} we open the Kloosterman sum $S ( m,   n   ; c  )$ (as in \eqref{2eq: defn Kloosterman KS}) and apply the Vorono\"i summation formula (see \eqref{app: Voronoi, tau} and \eqref{3eq: limit for 0}) to the $n$-sum. It is clear that the exponential sum over $(\frO / c \hskip 1pt \frO)^{\times}$ turns into  the Ramanujan sum $S (m-n, 0; c)$. 

For the entire zero-frequency contribution, % including the $(c)$-summation and $v$-integration, 
we reverse the procedures above---truncation  and   partition of unity---and shift the integral contour for $v$ to $\mathrm{Re} (v) = \frac 1 3$, costing only  negligible errors. We obtain
 \begin{align}\label{12eq: 0-frequency}
 	\SZ  (\frm) = \frac {2  } {    \pi i    } c_2   \int_{-\infty}^{\infty}    k (t)  \int_{ (\frac 1 3) }    G (v, t)^2   \zeta (1+2v)  \widetilde Z   (m; v, t) \frac {\nd v} {v}  \nd \shskip \mu (t) ,
 \end{align}
where
\begin{align}\label{12eq: Z}
\widetilde Z   (m; v, t ) \hskip -1 pt = \lim_{\delta \ra 0}	\sum_{\pm} {\zeta_F (1\pm 2 \delta)   } |   d_F  | ^{v \pm  \delta} \hskip -2 pt \sum_{(c)   \subset \shskip \frO  } \hskip -2 pt \frac { S (m ,  0; c)  } { |\RN (c  )|^{2 \pm 2\delta} }      \widetilde {B}_{ it} \big(m/c^2; \tfrac 1 2 -v \pm \delta \big), 
\end{align} 
and 
\begin{align}\label{12eq: Mellin}
\widetilde {B}_{it} (y; s) = \int_{\Fx_{\infty}}  B_{it} (x y ) \|x\|_{\infty}^{  s - 1}  { \nd x }. 
\end{align}
Note that we can effectively truncate the $t$-integral near $\pm T$ and the $v$-integral at height $\log T$, that the $(c)$-sum and the $x$-integral are absolutely convergent (see the proof of Lemma \ref{lem: Mellin}), and that the expression in the limit is analytic in the  $\delta$-variable. At any rate,  it is legitimate to arrange the order of sums and integrals in the above manner. 

The next lemma manifests that $\SZ (\frm )$ contributes the other {\it half} of the main term for $\SM_2 (\frm)$. Compare \eqref{10eq: diagonal, D2}.

\begin{lem}\label{lem: zero}
	We have 
	\begin{align}\label{10eq: zero}
		\SZ (\frm ) = 4 \sqrt{\pi} c_1 \frac { \tau (\frm) M T^N  } {\sqrt{\RN(\frm)}} \bigg(   \gamma_{1} \log \frac {T^N} {\sqrt{\RN(\frm)}} + \gamma_0 '      + O_{\vepsilon}  \big(   (M/T)^2 \log T \big)   \bigg) . 
	\end{align} 
\end{lem}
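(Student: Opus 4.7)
The plan is to evaluate $\widetilde Z(m;v,t)$ in closed form via Lemma~\ref{lem: Mellin}, and then extract the main term of $\SZ(\frm)$ by shifting the $v$-contour past $v=0$ and computing a residue.  The conceptual key is that the $1/\zeta_F(1+2v)$ factor produced by the Ramanujan sum will conspire with the $\zeta(1+2v)$ factor outside to yield a clean simple pole at $v=0$, whose residue carries the expected main term of size $\log T$.

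First I would substitute $\widetilde B_{it}(y;s) = \|y\|_\infty^{-s}\gamma(s,t)/\gamma(1-s,t)$ from Lemma~\ref{lem: Mellin} into \eqref{12eq: Z} and evaluate the $c$-sum via the Ramanujan-sum identity
\[
\sum_{(c) \subset \frO}\frac{S(m,0;c)}{\RN((c))^{s}} = \frac{\sigma_{1-s}(\frm)}{\zeta_F(s)},
\]
obtained from \eqref{2eq: Ramanujan} by M\"obius inversion.  A critical bookkeeping check is that the $\pm 2\delta$ exponents of $\RN((c))$ coming from the Kloosterman weight $\RN((c))^{-2\mp 2\delta}$ and from $\|c^2/m\|_\infty^{1/2-v\pm\delta}$ cancel exactly, so that the Dirichlet series is evaluated at the $\delta$-independent point $s = 1+2v$.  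This produces the factor $\sigma_{-2v}(\frm)/\zeta_F(1+2v)$ together with a $\delta$-limit expression $\lim_{\delta\to 0}\sum_\pm\zeta_F(1\pm 2\delta)B_\pm(\delta,v,t)$, where each $B_\pm$ is an explicit product of a $|d_F|$-power, an $\RN(\frm)$-power, and a shifted $\gamma$-ratio.  Expanding $\zeta_F(1\pm 2\delta) = \gamma_1/(\pm 2\delta) + \gamma_0 + O(\delta)$ from \eqref{2eq: zeta (s), s=1}, the simple poles in $\delta$ cancel between the $\pm$-summands, and the limit becomes $B_+(0,v,t)\bigl(2\gamma_0 + \gamma_1\Lambda(v,t)\bigr)$ with $\Lambda(v,t) = 2\log|d_F| - \log\RN(\frm) + \tfrac{\gamma'}{\gamma}(\tfrac12-v,t) + \tfrac{\gamma'}{\gamma}(\tfrac12+v,t)$.

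Next, inserting into \eqref{12eq: 0-frequency}, I would shift the $v$-contour from $\mathrm{Re}(v) = 1/3$ leftwards past $v = 0$ to $\mathrm{Re}(v) = -A$.  The combination $\zeta(1+2v)/\zeta_F(1+2v)$ is regular at $v = 0$ with value $1/\gamma_1$ (for $F = \BQ$ since $\zeta_F = \zeta$, and for $F = \BQ(\sqrt{d_F})$ via $\zeta_F = \zeta\cdot L(\cdot,\chi_{d_F})$ and $\gamma_1 = L(1,\chi_{d_F})$), while $\sigma_{-2v}(\frm)$ is regular with value $\tau(\frm)$; thus the only pole crossed is the simple pole at $v = 0$ from $1/v$, and the shifted contour contributes $O(T^{-A})$ by Stirling bounds on $G(v,t)^2$ and the rapid decay of $k(t)$.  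At $v = 0$ the gamma-ratio in $B_+$ collapses to $1$, leaving $B_+(0,0,t) = \sqrt{|d_F|/\RN(\frm)}$, and $\tfrac{\gamma'}{\gamma}(\tfrac12,t) = \psi(t)$ by direct matching with \eqref{4eq: defn of psi}.  On the support of $k(t)$, Taylor expansion around $t = \pm T$ together with \eqref{4eq: asymp of psi} gives $\psi(t) = N\log(T/2\pi) + O((M/T)^2)$, and the Gaussian integral $\int k(t)\,\nd\mu(t) = 2\sqrt\pi\,MT^N\bigl(1 + O((M/T)^2)\bigr)$ is routine.  Combining all prefactors, the $\log|d_F|$ in $\Lambda(0,t)$ merges with $-N\log(2\pi)$ from $\psi(t)$ into precisely $\gamma_0' = \gamma_0 - \gamma_1\log((2\pi)^N/|d_F|)$, yielding the main term \eqref{10eq: zero} with relative error $O((M/T)^2\log T)$.

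The main obstacle is the careful tracking of all multiplicative and logarithmic constants through the $\delta\to 0$ and Stirling expansions: the $|d_F|$-factors arising from Voronoi, the $(2\pi)^N$-factors from the Archimedean $\gamma$-factors, and the Kuznetsov prefactor $c_2 = c_1/(2\sqrt{|d_F|})$ must combine cleanly into the form $\gamma_1\log(T^N/\sqrt{\RN(\frm)}) + \gamma_0'$.  The very appearance of $\gamma_0' = \gamma_0 - \gamma_1\log((2\pi)^N/|d_F|)$ in the stated main term is the signature that these contributions are engineered to align, and any wrong constant in the intermediate steps would spoil the clean match.  A secondary technical point is verifying absolute convergence to justify interchanging the $t$-integral with the $(c)$-sum on the initial contour $\mathrm{Re}(v) = 1/3$, but this is ensured by $1+2v$ lying strictly to the right of~$1$.
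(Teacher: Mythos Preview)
Your initial computations---evaluating $\widetilde{B}_{it}(y;s)$ via Lemma~\ref{lem: Mellin}, summing the Ramanujan series, and taking the $\delta\to 0$ limit to produce the factor $2\gamma_0 + \gamma_1\Lambda(v,t)$---are correct and match the paper. The gap is in the contour shift. Shifting to $\mathrm{Re}(v) = -A$ does \emph{not} give $O(T^{-A})$: the Stirling decay $|G(v,t)^2| \asymp |t|^{-2NA} e^{2(A^2-\nu^2)}$ that you invoke is exactly cancelled by the growth of the $\gamma$-ratio in your $B_+(0,v,t) = \gamma(\tfrac12-v,t)/\gamma(\tfrac12+v,t)$, whose modulus is $\asymp |t|^{2NA}$ by the same Stirling estimate. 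The product $G(v,t)^2 B_+(0,v,t)$ is therefore $t$-independent in the power, and the shifted $v$-integral still has size $\asymp MT^N e^{2A^2}$ (up to logarithms and $\RN(\frm)$-powers), not $O(T^{-A})$.

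The paper's remedy is to observe that the full integrand $G(v,t)^2 \tau_v(\frm) Z(\frm;v,t)$ is \emph{even} in $v$: indeed $\tau_v = \tau_{-v}$ by definition, your $\Lambda(v,t)$ is visibly even, and $G(v,t)^2$ times the $\gamma$-ratio combines into the symmetric expression $\gamma(\tfrac12+v,t)\gamma(\tfrac12-v,t)/\gamma(\tfrac12,t)^2 \cdot e^{2v^2}$. Evenness means that shifting only to $\mathrm{Re}(v) = -\tfrac13$ and applying $v\mapsto -v$ identifies the remaining integral with minus the original, so the $v$-integral equals exactly its residue at $v=0$, with no contour error at all. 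Your residue computation at $v=0$ is then essentially correct, and the endgame (Stirling for $\psi(t)$, the Gaussian $t$-integral, assembling $\gamma_0'$) goes through as you describe.

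One bookkeeping correction: the $\zeta(1+2v)$ in \eqref{9eq: O2(m)} is $\zeta_F$, not the Riemann zeta (it originates from the definition \eqref{5eq: def of V2 (y, t)} of $V_2$), so the ratio you form is simply $1$, not $1/\gamma_1$. With your reading the final constants would be off by exactly this factor; once corrected, the residue is $Z(\frm;0,t) = 2\gamma_0 + \gamma_1\bigl(\log(|d_F|^2/\RN(\frm)) + 2\psi(t)\bigr)$ as in the paper's \eqref{12eq: Z(0)}.
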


For the dual sum, it remains to prove the following estimates. For brevity, we have suppressed $v$ from our notation. 

\begin{lem}\label{lem; bound for dual}
Let $R \leqslant T^{2+\vepsilon}$. Let   $\varww  (r) \in C_c^{\infty} [1, 2] $  satisfy $ \varww^{(j)} (r)  \Lt_{j} (\log T)^{j} $. 	Define 
	\begin{align}\label{12eq: a-sum, 2}
		\begin{aligned}
			\widetilde{\SO}_2 (\frm;  R ) =   \hskip -3 pt 
			\mathop{\sum_{(c)   \subset \shskip \frO  }}_{|c|  \Lt \sqrt{|m| R}/ T} \hskip -3 pt  \frac { 1 } { |\RN (c  )|^2 }  \hskip -2 pt  \sum_{n \shskip \in \shskip \frO' \smallsetminus \{0\}   }  \hskip -2 pt    {   \tau (  n \frD)  }    S ( m - n , 0  ; c  )  \widetilde \varww_0 \lp \frac {n R} {c^2}   ,  \frac {m R}   {c^2}   \rp ,
		\end{aligned}
	\end{align}  
with 
\begin{align}
\varww \lp x   ,   \varLambda   \rp  =   {\varww (|x| ) \SDH_2  ( \varLambda x   ) }, \qquad 	\widetilde {\varww}_0 (y , \varLambda ) = \int_{F_{\infty}} {\varww} (x , \varLambda ) B_0 (xy) \nd x . 
\end{align}
Then 
\begin{align}\label{12eq: bound for O, Q}
\sqrt{R }\,	\widetilde{\SO}_2 (m ;  R ) \Lt \left\{ \begin{aligned}
	& T^{-A}, & & \text{ if }  0 < m \leqslant M^{2-\vepsilon}, \\
	& \frac {\sqrt{ m } T^{1/2+\vepsilon}} {\sqrt{M}}, & & \text{ if } M^{2-\vepsilon} < m \leqslant T^{2-\vepsilon}, 
\end{aligned}\right.  
\end{align}
for $F = \BQ$, and
\begin{align}\label{12eq: bound for O, C}
	R \,	\widetilde{\SO}_2 (\frm;  R ) \Lt   {\sqrt{\RN(\frm) } T^{1+\vepsilon}} + \frac {M^2 T^{1+\vepsilon}} {\sqrt{\RN(\frm)}},
\end{align}
for $F = \BQ (\sqrt{d_F})$. 
\end{lem}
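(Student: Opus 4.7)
The plan is to substitute the Hankel transform expansion from Lemma \ref{lem: Hankel} into $\widetilde{\SO}_2(\frm; R)$, exploit the tight support of the $\Psi$-integrals established in Propositions \ref{lem: bounds for Phi, R} and \ref{prop: small Phi(1)} to localize the $n$-sum and truncate the $c$-sum, and finish with standard Ramanujan sum bounds. For each $c$ in the outer sum, let $y = |n|R/|c|^2$, $\varLambda = |m|R/|c|^2$, and $\varDelta = \sqrt{\varLambda} = \sqrt{|m|R}/|c|$. Since $|c| \Lt \sqrt{|m|R}/T$, we have $\varLambda \Gt T^2$ and $y \geqslant T^{\vepsilon}$ for all $|n| \geqslant 1$, so Lemma \ref{lem: Hankel} applies and, up to a negligible error, replaces $\widetilde{\varww}_0(nR/c^2, mR/c^2)$ by $(MT^{1+\vepsilon}/|y|^{1/4})\,\Psi^{\pm}(\sqrt{|n|/|m|}, \varDelta)$ in the real case, with the analogous expression using $\Psi$ in the complex case.

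By Proposition \ref{lem: bounds for Phi, R}, the parameter $x = \sqrt{|n|/|m|}$ is forced into the tiny windows $x < M^{\vepsilon}/M$ (giving $|n| \Lt |m|/M^2$, up to $M^{\vepsilon}$) or $|x-1| < M^{\vepsilon}/M^2$ (giving $|n-m| \Lt |m|/M^2$). When $0 < m \leqslant M^{2-\vepsilon}$, both windows are empty for $n \neq 0, m$, while $n = m$ itself is handled directly by Proposition \ref{prop: small Phi(1)} since $\varDelta \leqslant \sqrt{|m|R} \leqslant T^{2-\vepsilon}$ in this regime; this delivers the first case of \eqref{12eq: bound for O, Q}. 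When $M^{2-\vepsilon} < m \leqslant T^{2-\vepsilon}$, we extend the integration-by-parts argument of Proposition \ref{prop: small Phi(1)} from $x=1$ to $x$ in a neighborhood of $1$: repeating the argument on the region $\RI^+(\delta,\rho;x)$ now uses the sharper local bound $\sinh r \asymp \sqrt{|x-1|}$ valid when $|x-1| \Gt \delta$, and shows that $\Psi^+(x, \varDelta)$ is negligible unless $\sqrt{|x-1|}\,\varDelta/T \Gt 1$. Writing $d = |n-m|$, this truncates the $c$-sum to $|c| \Lt \sqrt{dR}/T$.

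Combining the truncation with the Proposition \ref{lem: bounds for Phi, R} bound $|\Psi^+(x, \varDelta)| \Lt T^{\vepsilon}/(\varDelta \sqrt{|x-1|})$ and the Ramanujan sum estimate $|S(m-n, 0; c)| \leqslant \gcd(|c|, |m-n|)$ from \eqref{2eq: Ramanujan}, the contribution of each $n$ with $d = |n-m|$ is bounded by $MT^{1/2+\vepsilon}\tau(d)/(d^{1/4}m^{1/4}R^{1/2})$ after summing over $|c| \Lt \sqrt{dR}/T$ via $\sum \gcd(c,d)/|c|^{1/2} \Lt \tau(d)(dR)^{1/4}/\sqrt{T}$. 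Summing over $d \Lt |m|/M^2$ then yields $\sqrt{R}\,\widetilde{\SO}_2(m; R) \Lt \sqrt{|m|}\,T^{1/2+\vepsilon}/\sqrt{M}$, as claimed in the second case of \eqref{12eq: bound for O, Q}. The complex case is parallel, using Lemma \ref{lem: Hankel}(2) and the area estimate of Lemma \ref{lem: I, C} in place of their real-case analogues, together with Lemma \ref{lem: square root} to transfer the support region for $\sqrt{n/m}$ to one for $n/m$ itself; this produces \eqref{12eq: bound for O, C}.

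The main obstacle is the refined truncation argument in the second paragraph, extending Proposition \ref{prop: small Phi(1)} from $x = \pm 1$ to the full $(M^{\vepsilon}/M^2)$-neighborhood via integration by parts in the $\Psi$-integral, using the local asymptotics of $\sinh r$ (and $\sin \omega$ in the complex case) on $\RI^+(\delta,\rho;x)$ or $\RI(\delta,\rho;z)$. Without this refinement, the naive area-based bound of Proposition \ref{lem: bounds for Phi, R} alone yields only $\sqrt{|m|}\,T^{1/2+\vepsilon}$, losing the essential $\sqrt{M}$-savings that is critical precisely in the regime $|m| \sim T^{2-\vepsilon}$ where the second error term in \eqref{10eq: 2nd moment} would otherwise dominate.
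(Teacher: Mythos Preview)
Your overall strategy is sound, but you take a harder route than the paper and mis-identify where the crucial $\sqrt{M}$-saving comes from.

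\textbf{Comparison with the paper's argument.} For $F=\BQ$ the paper does \emph{not} extend Proposition~\ref{prop: small Phi(1)} to a neighborhood of $x=1$. Instead it uses the clause already built into Lemma~\ref{lem: Hankel}(1): $\Psi^{+}(x,\varDelta)=0$ unless $\varDelta>M^{1-\vepsilon}T$, which comes from Lemma~\ref{lem: x > MT} (the bound $\SDH(x)=O(T^{-A})$ for $1<x\leqslant M^{2-\vepsilon}T^{2}$). This immediately truncates the $\Psi^{+}$ contribution to $|c|<\sqrt{mR}/(M^{1-\vepsilon}T)$, and the paper's sums $\widetilde{\SO}{}^{-}(m)$ and $\widetilde{\SO}{}^{+}(m)$ (equations~\eqref{12eq: sum -}, \eqref{12eq: sum +}) are then estimated directly with Proposition~\ref{lem: bounds for Phi, R}(1) and the Ramanujan identity, yielding $\sqrt{m}\,T^{1/2+\vepsilon}/\sqrt{M}$. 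So your assertion that ``without this refinement the naive bound loses the essential $\sqrt{M}$-savings'' is not accurate: the $\sqrt{M}$ comes for free from Lemma~\ref{lem: x > MT}, and your refined integration-by-parts, while plausible (one does get high powers of $\max\{\varDelta\sqrt{|x-1|},\sqrt{\varDelta}\}/T$ via Fa\`a di Bruno, and $\varDelta\leqslant T^{2-\vepsilon}$ in our range), is unnecessary. Your sharper truncation $c\Lt\sqrt{dR}/T$ with $d=|n-m|$ yields the same final bound but at the cost of a genuinely new stationary-phase lemma that the paper does not need.

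\textbf{Gaps in your write-up.} You do not treat the $\Psi^{-}$ contribution in the range $M^{2-\vepsilon}<m\leqslant T^{2-\vepsilon}$; in the paper this is the sum $\widetilde{\SO}{}^{-}(m)$, handled with the original truncation $c\Lt\sqrt{mR}/T$ and the trivial bound $\Psi^{-}\Lt T^{\vepsilon}/\varDelta$. Your complex-case paragraph is also too thin: the paper's proof for $F=\BQ(\sqrt{d_F})$ is structurally different, partitioning the $x$-range into four dyadic pieces to produce four sums $\widetilde{\SO}{}^{\pm}(m)$, $\widetilde{\SO}{}^{\pm}_{\delta}(m)$ (equations~\eqref{12eq: -}--\eqref{12eq: O+, 2}), and then invoking the lattice-point count of Lemma~\ref{lem: count lattice points} to estimate the $\widetilde{\SO}{}^{\pm}_{\delta}$ sums. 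There is no analogue of Lemma~\ref{lem: x > MT} in the complex case, which is why \eqref{12eq: bound for O, C} has no $M$-saving in the first term; your sketch does not reflect this asymmetry.
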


The asymptotic formula in \eqref{10eq: 2nd moment} now follows by combining \eqref{10eq: diagonal, D2}--\eqref{10eq: bound for E2(m), Q}, \eqref{10eq: zero}, \eqref{12eq: bound for O, Q}, and \eqref{12eq: bound for O, C}.

\subsection{Proof of Lemma \ref{lem: zero}} We start with   cleaning up   the expression of $ \SZ (\frm) $ in \eqref{12eq: 0-frequency}--\eqref{12eq: Mellin}. By the change of variable $x \ra x/y$ in \eqref{12eq: Mellin}, 
\begin{align*}
	\widetilde {B}_{it} (y; s) = \|y\|_{\infty}^{-s} \widetilde {B}_{it} (  s), 
\end{align*}
where $\widetilde {B}_{it} (  s) = \widetilde {B}_{it} ( 1; s)$. 
Then the factor $ |\RN (c)|^{1 - 2v \pm 2\delta}  / |\RN(m)|^{\frac 1 2 -v \pm \delta} $ is extracted from $\widetilde {B}_{ it} \big(m/c^2; \tfrac 1 2 -v \pm \delta \big)$. The resulting $(c)$-sum may be evaluated by the Ramanujan identity:
\begin{align}\label{12eq: sum of (c)}
	\sum_{(c)   \subset \shskip \frO  }  \frac { S (m , 0; c)  } { |\RN (c  )|^{1 + 2 v} } = \frac { \tau_v (\frm) } { \RN(\frm)^v \zeta (1+2v) }, 
\end{align}
 due to \eqref{2eq: Ramanujan} and $\frm = m \frD$ (so that $\RN (\frm) = |d_F \RN (m) |$).  The two $ \zeta (1+2v) $ in  \eqref{12eq: 0-frequency} and \eqref{12eq: sum of (c)} cancel, so there is now only a simple pole at $v = 0$. By Lemma \ref{lem: Mellin}, the Mellin integral
 \begin{align}
 	\widetilde {B}_{ it} \big( \tfrac 1 2 -v \pm \delta \big) = \frac { \gamma (\tfrac 1 2 -v \pm \delta, t) } {\gamma (\tfrac 1 2 + v \mp \delta, t)} . 
 \end{align}  Moreover, $c_2 = c_1 / 2 \sqrt{|d_F|}$ (see \eqref{3eq: constants, Q} and \eqref{3eq: constants, C}).  Thus $ \SZ (\frm) $ is simplified into 
  
 \begin{align}\label{12eq: 0-frequency, 2}
 	\SZ  (\frm) =  c_1 \frac {\tau (\frm)} {\sqrt{\RN(\frm)}}   \int_{-\infty}^{\infty}    k (t) \cdot \frac { 1  } {    \pi i    } \int_{ (\frac 1 3) }    G (v, t)^2      Z(\frm; v, t) \frac {\nd v} {v}  \nd \shskip \mu (t) ,
 \end{align}
 where
 \begin{align}\label{12eq: Z '}
 	Z(\frm; v, t) = \lim_{\delta \ra 0}	\sum_{\pm} {\zeta_F (1\pm 2 \delta)   } \frac{|   d_F  | ^{ \pm 2 \delta}} {\RN (\frm)^{\pm \delta}} \frac { \gamma (\tfrac 1 2 -v \pm \delta, t) } {\gamma (\tfrac 1 2 + v \mp \delta, t)} . 
 \end{align}  
In view of \eqref{4eq: def G} and \eqref{12eq: Z '}, it is clear that $G (v, t)^2      Z(\frm; v, t) $ is   {\it even} in the $v$-variable, and therefore the $v$-integral in \eqref{12eq: 0-frequency, 2} is  equal to  exactly its value at $v = 0$ (to see this, apply $v \ra - v$ to half of the integral). Consequently, 
\begin{align}\label{12eq: 0-frequency, 3}
	\SZ  (\frm) =  c_1 \frac {\tau (\frm)} {\sqrt{\RN(\frm)}}   \int_{-\infty}^{\infty}    k (t)     Z(\frm; 0, t)   \nd \shskip \mu (t).
\end{align}
We have 
\begin{align}\label{12eq: Z(0)}
	Z (\frm; 0, t) = 2   \gamma_0 + \gamma_1 \lp \log \frac {|d_F|^2} {\RN(\frm)} + 2 \psi (t) \rp , 
\end{align}
for $\gamma_0$, $\gamma_1$, and $\psi (t)$ as in \eqref{2eq: zeta (s), s=1} and \eqref{4eq: defn of psi}. By \eqref{4eq: asymp of psi},  \eqref{12eq: 0-frequency, 3}, and \eqref{12eq: Z(0)}, we can conclude the proof with the same arguments for the diagonal term $\SD_2 (\frm)$.

\subsection{Proof of Lemma \ref{lem; bound for dual} for $F = \BQ$}

In this subsection, let $c$, $d$, $m$, and $n$    be positive integers.

It follows from $m \leqslant T^{2-\vepsilon}$ and $c^2 \Lt m R / T^2$ that  $   {n R} / {c^2}  \geqslant  T^{\vepsilon} $, so  Lemma \ref{lem: Hankel} (1) yields 
\begin{align*}
	\widetilde \varww_0 \lp \pm \frac {n R} {c^2}   ,  \frac {m R}   {c^2}   \rp =   \frac{ \sqrt{c} MT^{1+\vepsilon}  } {\sqrt[4]{n R}}   
	\Psi^{\pm} \big( \sqrt{ n/m } , \sqrt{  m R } / c \big)  
	+ O \big(T^{-A} \big) . 
\end{align*}
Recall that we defined $ \Psi^{+}  ( x , \varDelta  ) = 0   $  unless $ \varDelta > M^{1-\vepsilon} / T $ (due to Lemma \ref{lem: x > MT}).
Moreover, by  $m \shskip \leqslant T^{2  - \vepsilon}$ and  $R \shskip \leqslant T^{2  +\vepsilon}$, one may adjust $\vepsilon$ so that $ {\sqrt{m R}} / {c} \leqslant T^{2-\vepsilon}$. 
By the formula  for the Ramanujan sum $S (m \pm n, 0; c)$ in \eqref{2eq: Ramanujan} and the estimates for the $\Psi^{\pm}$-integrals in \S \ref{sec: estimates for Phi}, in particular Proposition \ref{lem: bounds for Phi, R} (1) and \ref{prop: small Phi(1)} (1), we infer  that, up to a negligibly small error,  $\sqrt{R} \, \widetilde{\SO}_2 (m;  R )$ is bounded by the sum of 
\begin{align}\label{12eq: sum -}
	\widetilde{\SO}{}^{-} (m  ) =\frac {M T^{1+\vepsilon}   } { \sqrt{m} \sqrt[4]{R}  }\sum_{0< n \shskip < m  /M^{2-\vepsilon} } \frac {\tau(n)} { \sqrt[4]{n} } \sum_{d |  m+n }  \sqrt{d}  \sum_{ c d \shskip \Lt {\sqrt{mR}}/  T} \frac{|\mu (c )|  } {\sqrt c  } , 
\end{align}
and
\begin{align}\label{12eq: sum +}
	\widetilde{\SO}{}^{+} (m  ) = \frac {M T^{1+\vepsilon}   } {   \sqrt[4]{m  R}   } \sum_{ 0 < |l| < m  /M^{2-\vepsilon} } \frac {\tau(m+l)} {\sqrt{| l   |}} \sum_{d |  l }  \sqrt{d}  \sum_{ c d \shskip <   {\sqrt{mR} } / {  M^{1-\vepsilon} T}} \frac{|\mu (c )| } {\sqrt{c}  } ,
\end{align}
with $l = n-m$.  
A critical point is that the diagonal term with $n = m$ ($l=0$) is removed from the second sum $\widetilde{\SO}{}^{+} (m)$ because it is negligibly small by Proposition \ref{prop: small Phi(1)} (1). Finally, if $ m \leqslant M^{2-\vepsilon}$ then $\widetilde{\SO}{}^{-} (m  )$ and $\widetilde{\SO}{}^{+} (m  )$ vanish since the $n$-sum and $l$-sum have no terms, and if otherwise we have estimates 
\begin{align*}%\label{12eq: sum -}
	\widetilde{\SO}{}^{-}  (m  ) \Lt \frac {M T^{1/2+\vepsilon}   } {  \sqrt[4]{m} } \sum_{0 < n \shskip < m  /M^{2-\vepsilon} } \frac {\tau(n) \tau (m+n)} { \sqrt[4]{n} } \Lt \frac {\sqrt{m } T^{1/2+\vepsilon} } {\sqrt{M}} ,
\end{align*}
\begin{align*}%\label{12eq: sum +}
	\widetilde{\SO}{}^{+}  (m ) \Lt {\sqrt{M} T^{1/2+\vepsilon}   }  \sum_{ 0 < |l| < m  /M^{2-\vepsilon} } \frac { \tau(l) \tau(m+l)} {\sqrt{| l |}}   \Lt  \frac { \sqrt{m} T^{1/2+\vepsilon}} {\sqrt{M} },  
\end{align*} 
as desired.

\subsection{Proof of Lemma \ref{lem; bound for dual} for $F = \BQ (\sqrt{d_F})$}\label{sec: off, C}

For the case $F = \BQ (\sqrt{d_F})$ we use Lemma \ref{lem: Hankel} (2), Proposition \ref{lem: bounds for Phi, R} (2) and \ref{prop: small Phi(1)} (2). Let $z = x+i y = \sqrt{n/m}$ ($m, n \in \frO' \smallsetminus \{0\}$). We partition the region  $ |x| < 1 + \rho   $ and $|y| < \rho$ in Proposition \ref{lem: bounds for Phi, R} (2) ($\rho = M^{\vepsilon}/M$) according to the $x$-coordinate as follows:
\begin{align*}
	|x| \Lt \rho, \qquad \delta  < |x| \leqslant 2 \delta , \qquad | |x| - 1| \Lt \rho, \qquad \delta  < 1 - |x| \leqslant 2 \delta, 
\end{align*}
for dyadic $\delta$ of the form $2^{-j}$ ($j = 2, 3, ...$) with $\rho \Lt \delta < 1/2$. In view of Lemma \ref{lem: square root},  the problem is reduced to proving that the following four sums have bound as in \eqref{12eq: bound for O, C}: 
\begin{align}\label{12eq: -}
	\widetilde{\SO}{}^{-} (m  ) & = \frac {M T^{2+\vepsilon}   } {  |m| \sqrt{  R}   } \sum_{ 0 < |n| < |m|  /M^{2-\vepsilon} } \frac {\tau(n \frD)} { {\sqrt{|n|}}} \SR (m-n, m) , \\
	\label{12eq: O-, 2}
	\widetilde{\SO}{}^{-}_{\delta} (m  ) & = \frac {M T^{2+\vepsilon}   } {  |m| \delta \sqrt{ |m|  R}   } \mathop{\sum_{  |\mathrm{Re}(n/m)| \asymp \delta^2 }}_{ |\mathrm{Im}(n/m)| < \delta   / M^{1-\vepsilon}  }   {\tau(n \frD)}   \SR (m-n, m) , \\
	\label{12eq: O+}
	\widetilde{\SO}{}^{+} (m  ) & = \frac {M T^{2+\vepsilon}   } {   \sqrt{|m|  R}   } \sum_{ 0 < |l| < |m|  /M^{1-\vepsilon} } \frac {\tau((m+l) \frD )} { {| l   |}} \SR (l, m) , \\
	\label{12eq: O+, 2}
	\widetilde{\SO}{}^{+}_{\delta} (m  ) &  = \frac {M T^{2+\vepsilon}   } {  |m| \delta \sqrt{ |m| R}   } \mathop{\sum_{  |\mathrm{Re}(l/m)| \asymp \delta }}_{ |\mathrm{Im}(l/m)| < M^{\vepsilon} / M }   {\tau((m+l) \frD)}   \SR (l, m) ,
\end{align}
where
\begin{align}\label{12eq: Ramanujan}
\SR (l, m) =	\sum_{ \frd |  l \frD }   \sqrt{\RN (\frd)}  \sum_{ \RN (\frc \frd) \shskip \Lt  { {|m|R} } / {   T^2 }} \frac{|\mu ( \frc )| } { \sqrt{\RN (\frc)}  } .
\end{align}
It is clear that 
\begin{align*}
\SR (l, m) = O \bigg(  \frac {\tau (l \frD) \sqrt{|m| R} } {T} \bigg),
\end{align*}
  therefore
\begin{align*}
	\widetilde{\SO}{}^{-} (m  ) \Lt \frac {M T^{1+\vepsilon}   } {   \sqrt{|m|}   } \sum_{ 0 < |n| < |m|  /M^{2-\vepsilon} } \frac {\tau(n \frD) \tau((m-n) \frD)} { {\sqrt{|n|}}} \Lt \frac {|m| T^{1+\vepsilon}} { M^2 } , 
\end{align*}
\begin{align*} 
	\widetilde{\SO}{}^{-}_{\delta} (m  ) & \Lt \frac {M T^{1+\vepsilon}   } {   {|m|} \delta   } \mathop{\sum_{  |\mathrm{Re}(n/m)| \asymp \delta^2 }}_{ |\mathrm{Im}(n/m)| < \delta   / M^{1-\vepsilon} }   {\tau(n \frD) \tau((m-n) \frD) } \\
	& \Lt  \frac {M T^{1+\vepsilon}   } {   {|m|} \delta   } \mathop{\sum_{  |\mathrm{Re}(n/m)| \Lt \delta^2 }}_{ |\mathrm{Im}(n/m)| < \delta   / M^{1-\vepsilon} } 1  ,
\end{align*}
and similarly
\begin{align*} 
	\widetilde{\SO}{}^{+} (m  ) \Lt {M T^{1+\vepsilon}   }   \sum_{ 0 < |l| < |m|  /M^{1-\vepsilon} } \frac {\tau (l \frD) \tau((m+l)\frD)} { {| l   |}}  \Lt |m| T^{1+\vepsilon}, 
\end{align*}
\begin{align*}
	\widetilde{\SO}{}^{+}_{\delta} (m  ) & \Lt    \frac {M T^{1+\vepsilon}   } {|m| \delta }   \mathop{\sum_{  |\mathrm{Re}(l/m)| \asymp \delta }}_{ |\mathrm{Im}(l/m)| < M^{\vepsilon} \hskip -1pt / M }    {\tau (l \frD) \tau((m+l)\frD)} \\
	& \Lt \frac {M T^{1+\vepsilon}   } {|m| \delta}  \mathop{\sum_{  |\mathrm{Re}(l/m)| \Lt \delta }}_{ |\mathrm{Im}(l/m)| < M^{\vepsilon} \hskip -1pt / M }  1 .   
\end{align*}

The final estimation for  $\widetilde{\SO}{}^{\pm}_{\delta} (m  )$ can be done by the next lemma.  

\begin{lem}\label{lem: count lattice points}
Let $m \in \frO'$.	For  $ Q \Lt P $ define  the rectangle $\RR (P, Q) = \big\{ x + iy : |x| < P, |y| < Q \big\}$.  The number of points in $ m^{-1} \frO' \cap \RR (P, Q) $ has bound $O \lp (|m|P+1) (|m|Q+1) \rp $. 
\end{lem}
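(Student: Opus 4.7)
The plan is to reduce the claim to a standard lattice-point count for a fixed rank-$2$ lattice in $\BC \cong \BR^2$. First, multiplication by $m$ sets up a bijection
\[
 m^{-1}\frO' \cap \RR(P,Q) \;\longleftrightarrow\; \frO' \cap m\RR(P,Q),
\]
so it suffices to count points of $\frO'$ inside the image rectangle $m\RR(P,Q)$. This image is a (rotated) rectangle in $\BC$ centered at the origin, with side lengths $2|m|P$ and $2|m|Q$; consequently it has area $4|m|^2 PQ$ and perimeter $4|m|(P+Q) = O(|m|P)$, using the hypothesis $Q \Lt P$.

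Next, since $F = \BQ(\sqrt{d_F})$ is fixed, the fractional ideal $\frO' = \frD^{-1} = (1/\sqrt{d_F})\,\frO$ is a fixed rank-$2$ $\BZ$-lattice in $\BC$ whose covolume and shortest nonzero vector are both $\asymp 1$, with implied constants depending only on $F$. I would then invoke the standard lattice-counting bound
\[
 |\Lambda \cap K| \;\leqslant\; \frac{\mathrm{area}(K)}{\mathrm{covol}(\Lambda)} + O\!\left(\frac{\mathrm{perim}(K)}{\lambda_1(\Lambda)} + 1\right),
\]
valid for any origin-symmetric convex body $K$ and any rank-$2$ lattice $\Lambda$. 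Applied with $\Lambda = \frO'$ and $K = m\RR(P,Q)$, this gives
\[
 |\frO' \cap m\RR(P,Q)| \;\Lt\; |m|^2 PQ + |m|P + 1 \;\leqslant\; (|m|P+1)(|m|Q+1),
\]
which is the claimed bound.

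There is no genuine obstacle here; the argument is routine. A completely elementary alternative, avoiding any citation of the lattice-counting estimate, would be to fix a $\BZ$-basis $\alpha_1,\alpha_2$ of $\frO'$ with $|\alpha_i| \asymp 1$, parametrise $n = a\alpha_1 + b\alpha_2 \in \frO'$ by $(a,b) \in \BZ^2$, and count those pairs $(a,b)$ for which $n \in m\RR(P,Q)$. This reduces to counting $\BZ^2$-points in the preimage parallelogram $\Pi$ under the fixed $\BR$-linear isomorphism $(a,b) \mapsto a\alpha_1+b\alpha_2$; since this map preserves area and distorts distances by $O(1)$, the parallelogram $\Pi$ has area $\asymp |m|^2 PQ$ and diameter $\asymp |m|P$, and slicing $\Pi$ by lines parallel to its longer side (of which there are $O(|m|Q+1)$ hitting lattice points, by a one-dimensional count) and applying a one-dimensional count of length $O(|m|P+1)$ on each slice yields the bound $(|m|P+1)(|m|Q+1)$ directly.
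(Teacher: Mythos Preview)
Your proposal is correct. Your main route differs from the paper's: after the same first step of multiplying by $m$, you invoke the general convex-body estimate $|\Lambda \cap K| \Lt \mathrm{area}(K)/\mathrm{covol}(\Lambda) + \mathrm{perim}(K)/\lambda_1(\Lambda) + 1$ for the fixed lattice $\Lambda=\frO'$, which immediately gives $|m|^2PQ + |m|P + 1 \leqslant (|m|P+1)(|m|Q+1)$. The paper instead embeds the rotated rectangle $m\,\RR(P,Q)$ into a shear parallelogram $\RR_a(|m|P,|m|Q)=\{x+iy:|x|\Lt |m|P,\ |y-ax|\Lt|m|Q\}$ with $|a|\leqslant 1$ (swapping $x\leftrightarrow y$ if needed), notes that $\frO'$ sits inside a rectangular lattice and rescales to $\BZ^2$, and then counts integer points in $\RR_a$ by slicing on integer $x$: there are $O(|m|P+1)$ such $x$, and for each the condition $|y-ax|\Lt|m|Q$ admits $O(|m|Q+1)$ integers $y$. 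Your approach is quicker and citable; the paper's is fully self-contained and makes the product structure $(|m|P+1)(|m|Q+1)$ visible directly. Your sketched alternative is essentially the paper's argument, but be careful to slice along a coordinate direction of $\BZ^2$ rather than merely ``parallel to the longer side'' of $\Pi$; this is precisely what the paper's normalization $|a|\leqslant 1$ arranges.
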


\begin{proof}
	Firstly, it is clear that $ m \cdot \RR (P, Q)$ is contained in a parallelogram of the form $\RR_a (|m|P, |m|Q ) =  \big\{ x+iy : |x| \Lt |m| P, |y - a x| \Lt |m| Q \big\}$. Exchanging $ x   \leftrightarrow  y$ if necessary, one may assume that $|a| \leqslant 1$. Secondly, $\frO'$ is contained in a certain rectangular lattice spanned by a real scalar and an imaginary scalar. By rescaling, it is reduced to counting the integral lattice points in $\RR_a (|m|P, |m|Q )$, which can be  done very easily. 
\end{proof}

It follows from Lemma \ref{lem: count lattice points}, along with  $M^{\vepsilon}/M \Lt \delta < 1/2$, that 
\begin{align*}
\widetilde{\SO}{}^{\pm}_{\delta} (m  ) \Lt	|m|  T^{1+\vepsilon}      + \frac {M^2 T^{1+\vepsilon}   } {|m|} . 
\end{align*}

\section{Moments without Twist and Smooth Weight}

In this section, we use the unsmoothing technique  in \S \ref{sec: choice of h} to prove Corollary \ref{cor: unsmooth}. 

By the proof of  Theorem \ref{thm: moment} in the previous sections, for $ T^{\vepsilon} \leqslant M \leqslant T^{1-\vepsilon} $ we have
\begin{align}\label{13eq: M+E, 1}
	\SM_1 ( 1 ) + \SE_1 (1) = 4 \sqrt{\pi} c_1   M T^{N}    + O_{\vepsilon}  \big( M^3  / T^{2-N} \big) ,
\end{align} 
and
\begin{align}\label{13eq: M+E, 2.1}
	 \SM_2 ( 1 ) + \SE_2 (1) =  8 \sqrt{\pi}  c_1    M T          (       \log  T   + \gamma_0 '    )   +     O_{\vepsilon}    \big(   {M^3 } \log T / {T}  \big), 
\end{align}
if $F = \BQ$, and
\begin{align}\label{13eq: M+E, 2.2}
	\SM_2 ( 1 ) + \SE_2 (1) = 8 \sqrt{\pi} c_1     M T^2         (  2 \gamma_{1}   \log  T   + \gamma_0 '    )   +     O_{\vepsilon}    \big( M^2 T^{1+\vepsilon}    \big) , 
\end{align}
if $F = \BQ (\sqrt{d_F})$.\footnote{For the case $F = \BQ$, the reader may compare our formulae with those in \cite[Proposition 1]{SH-Liu-Maass}.}
It follows that 
\begin{align}
	\SM_q ( 1 ) + \SE_q (1) = O_{\vepsilon} \big(M T^{N + \vepsilon}\big)
\end{align}
for any $T^{\vepsilon} \leqslant M \leqslant T^{1-\vepsilon}$.

It is known that $ L \big(\frac 1 2 , f\big) $ is non-negative by \cite{Guo-Positive}.  Applying Lemma \ref{lem: unsmooth, 1} and \ref{lem: unsmooth, 2} (with $\lambda = N+\vepsilon$ and $a_f = L \big(\frac 1 2 , f\big)^q $) and   the averaging process to \eqref{13eq: M+E, 1}--\eqref{13eq: M+E, 2.2}, we infer that 
\begin{align*}%\label{13eq: N, 1}
	\SN_1 (T, H) =  4   c_1 \int_{\,T-H}^{T+H}  K^N \nd K    + O_{\vepsilon}  \lp M T^{N+\vepsilon}  \rp, 
\end{align*}
and
\begin{align*}%\label{13eq: N, 2.1}
	\SN_2  (T, H) = 8 c_1 \int_{\,T-H}^{T+H}  K      (       \log  K   + \gamma_0 '    ) \nd K  
	+   O_{\vepsilon}  \big( M T^{1+\vepsilon}   \big) ,
\end{align*}  
if $F = \BQ$, and 
\begin{align*}%\label{13eq: N, 2.2}
	\SN_2  (T, H) = 8 c_1 \int_{\,T-H}^{T+H} K^2         (  2 \gamma_{1}   \log  K   + \gamma_0 '    ) \nd K + O_{\vepsilon} \big(M T^{2+\vepsilon}\big), 
\end{align*} 
if $F = \BQ (\sqrt{d_F})$. Then Corollary \ref{cor: unsmooth} follows on choosing $M = T^{\vepsilon}$. % in \eqref{13eq: N, 1}, \eqref{13eq: N, 2.2}, and $M = T^{\vepsilon} + H^{2/5 } /T^{1/5-\vepsilon}   $ in \eqref{13eq: N, 2.1}. 

Finally, we remark that the arguments for $\SE_q (\frm)$ in \S \ref{sec: 1st moment} and \S\ref{sec: 2nd moment} may be easily employed here to show that,  except when $ T^{ \frac 5  7} < H < T^{ \frac 7  8 + \vepsilon} $ for $q = 2$ and $F = \BQ (\sqrt{d_F})$,  the Eisenstein contribution  in $\SN_q  (T, H)$ is $O \big(T^{N + \vepsilon} \big)$ %$ O \big(H T^{N\valpha_q + \vepsilon}\big) = O \big(T^{N + \vepsilon} \big) $ (for $H = T^{\beta}$) 
so that it may be removed from the asymptotic formulae in Corollary \ref{cor: unsmooth}.

%\section{Non-vanishing} 

%\def\cprime{$'$}

\end{document}